\newtheorem{thm}{Theorem}[section]
\newtheorem{lem}[thm]{Lemma}
\newtheorem{exm}[thm]{Example}
\newtheorem{prop}[thm]{Proposition}
\theoremstyle{definition}
\newtheorem{defn}[thm]{Definition}
\theoremstyle{remark}
\newtheorem{rem}[thm]{\bf Remark}
\numberwithin{equation}{section}
\begin{document}
\title[Preprojective, skew group algebras and Morita equiv]{Preprojective algebras, skew group algebras and Morita equivalences}
\author[Xiao-Wu Chen, Ren Wang] {Xiao-Wu Chen, Ren Wang}

\thanks{}
\subjclass[2020]{16G20, 16S35, 16D90, 17B22}
\date{\today}

\thanks{E-mail: xwchen$\symbol{64}$mail.ustc.edu.cn; renw$\symbol{64}$mail.ustc.edu.cn}
\keywords{skew group algebra, Morita equivalence, preprojective algebra, tensor algebra, Weyl group}%

\maketitle

\dedicatory{}%
\commby{}%

\begin{abstract}
Let $\mathbb{K}$ be  a field of characteristic $p$ and $G$ be a cyclic $p$-group which acts on a finite acyclic quiver $Q$. The folding process associates a Cartan triple to the action. We establish a Morita equivalence between the skew group algebra of the preprojective algebra of $Q$  and the generalized preprojective algebra associated to  the Cartan triple in the sense of Geiss, Leclerc and Schr\"{o}er. The Morita equivalence induces an isomorphism between certain ideal monoids of these preprojective algebras, which is compatible with the embedding of Weyl groups appearing in the folding process. 
 \end{abstract}

\section{Introduction}

Weyl groups appear in the representation theory of quivers with different incarnations \cite{BGP, Kac80, IT, ORT}. There is a remarkable bijection \cite{BIRS, Mizuno} between the Weyl group of a  finite acyclic quiver and a certain ideal monoid of  the preprojective algebra. Similarly, for a symmetrizable generalized Cartan matrix, there is a bijection \cite{FG} of the Weyl group and a certain ideal monoid of the generalized preprojective algebra in the sense of \cite{GLS17}. The motivation is to compare these two bijections via the folding process.

Let $\mathbb{K}$ be a field. Let $Q$ be a finite acyclic quiver. Denote by $W(Q)$ its Weyl group and by $\Pi(Q)$ its preprojective algebra \cite{GP, Rin}.  Each vertex $i\in Q_0$ gives rise to an idempotent $e_i$ in $\Pi(Q)$. Denote by $I_i$ the two-sided ideal of $\Pi(Q)$ generated by $1-e_i$, and by $\langle I_i\; |\; i\in Q_0\rangle$ the monoid generated by these ideals. The bijection establised in \cite{BIRS, Mizuno}
$$\Theta_Q\colon W(Q)\longrightarrow \langle I_i\; |\; i\in Q_0\rangle$$
sends  the simple reflection $s_i$  to $I_i$ for each $i\in Q_0$.

Let $(C, D, \Omega)$ be a Cartan triple, that is,  $C$ is a symmetrizable generalized Cartan matrix \cite{Kac}, $D$ is the symmetrizer and $\Omega$ is an acyclic orientation of $C$. Assume that the rows and columns of $C$ are indexed by a set $\Lambda$. Denote by $W(C)$ the Weyl group and by $\Pi(C, D, \Omega)$ the generalized preprojective algebra in the sense of \cite{GLS17}. For each $\mathbf{j}\in \Lambda$, we denote by $e_\mathbf{j}$ the corresponding idempotent in $\Pi(C, D, \Omega)$. Denote by $L_{\mathbf{j}}$ the two-sided ideal of $\Pi(C, D, \Omega)$ generated by $1-e_\mathbf{j}$. The bijection established in  \cite{FG}
$$\Theta_C\colon W(C)\longrightarrow \langle L_\mathbf{j}\; |\; \mathbf{j}\in \Lambda \rangle$$ 
sends the simple reflection $r_\mathbf{j}$ to $L_\mathbf{j}$ for each $\mathbf{j}\in \Lambda$.

The folding process is classic in Lie theory \cite{St} and plays a role in the representation theory of quivers \cite{Tani, Hu04}. Let $G$ be a finite group which acts on $Q$ by quiver automorphisms. One associates a Cartan triple $(C, D, \Omega)$ to the $G$-action. The rows and colomns of both $C$ and $D$ are indexed by the orbit set $Q_0/G$ of vertices in $Q$, and the entries of the digaonal matrix $D$ are the cardinalities  of certain stabilizers. By \cite{St, Hee}, there is a well-known isomorphism 
$$\psi\colon W(C)\longrightarrow  W(Q)^G,$$ 
which sends simple reflections $r_\mathbf{j}$ to $\prod_{i\in \mathbf{j}}s_i$. Here, $W(Q)^G$ denotes the subgroup formed by $G$-invariant elements in $W(Q)$. 

In view of the bijections $\Theta_Q, \Theta_C$ and the isomorphism $\psi$, it is natural to expect that the monoids $\langle I_i\; |\; i\in Q_0\rangle^G$  and $\langle L_\mathbf{j}\; |\; \mathbf{j}\in Q_0/G\rangle$ are isomorphic. We confirm this expectation; see Proposition~\ref{prop:Iyama-FG}. 

\vskip 5pt

\noindent {\bf Proposition~A.}  \emph{ Let $G$ be a finite group which acts on a finite acyclic quiver $Q$. Consider the associated Cartan triple $(C, D, \Omega)$. Then there is a unique isomorphism $\Psi$ between monoids making the following diagram commute. 
\[\xymatrix{
W(C)\ar[rr]^\psi \ar[d]_-{\Theta_C} && W(Q)^G \ar[d]^-{\Theta_Q^G}\\
\langle L_\mathbf{j}\; |\; \mathbf{j}\in Q_0/G\rangle  \ar[rr]^-{\Psi} && \langle  I_i\; |\; i\in Q_0\rangle^G
}\]
Here, $\Theta_Q^G$ denotes the restriction of $\Theta_Q$ on $W(Q)^G$. 
}

\vskip 5pt
We mention a similar commutative diagram in Remark~\ref{rem:monoid}, where we replace $\langle L_\mathbf{j}\; |\; \mathbf{j}\in Q_0/G\rangle$ by a certain ideal monoid $\langle I_\mathbf{j}\#G\; |\; \mathbf{j}\in Q_0/G\rangle$ of the skew group algebra $\Pi(Q)\#G$.

The isomorphism $\Psi$ suggests that the two preprojective algebras $\Pi(Q)$ and $\Pi(C, D, \Omega)$ might be closely related. The aim of this work is to relate these two preprojective algebras in a specific situation; see Theorem~\ref{thm:main}. 

\vskip 5pt
\noindent {\bf Theorem~B.}  \emph{Assume that ${\rm char}(\mathbb{K})=p>0$ and that $G$ is a cyclic $p$-group. Assume that the $G$-action on $Q$ satisfies $G_\alpha=G_{s(\alpha)}\cap G_{t(\alpha)}$ for any arrow $\alpha$ in $Q$. Then there is a Morita equivalence
$$F\colon \Pi(Q)\#G \mbox{-}{\rm Mod}\longrightarrow \Pi(C, D, \Omega)\mbox{-}{\rm Mod}$$
such that
\begin{align}\label{equ:categorify}
\Psi^{-1}(I)=\Phi_F(I\#G)
\end{align}
for each $I\in \langle  I_i\; |\; i\in Q_0\rangle^G$. }

\vskip 5pt

Here, $\Phi_F$ denotes the isomorphism between the ideal monoids induced by the Morita equivalence $F$; see Proposition~\ref{prop:MI-iso}. For each arrow $\alpha$ with the starting vertex $s(\alpha)$ and terminating vertex $t(\alpha)$, we denote by $G_\alpha$, $G_{s(\alpha)}$ and $G_{t(\alpha)}$ their  stabilizers.  

The identity (\ref{equ:categorify}) indicates that, in a certain sense, the isomorphism $\Psi^{-1}$ is \emph{categorified}  by the Morita equivalence $F$ and the induction functor $-\#G$.

The proof of Proposition~A relies on the fact that these ideal monoids are isomorphic to the corresponding Weyl monoids \cite{Ts}. Moreover, we establish an analogue of the isomorphism $\psi$ for the Weyl monoids in Proposition~\ref{prop:monoid}. The proof of Theorem~B relies on the Morita equivalence in \cite{CW24} between the skew goup algebra of $\mathbb{K}Q$ and the algebra $H(C, D, \Omega)$ in \cite{GLS17}. We also use general results on $2$-preprojective algebras of   arbitrary algebras, which are implicit in \cite{Le}. When $Q$ is of type $A$ and $G$ is of order $2$, such a  Morita equivalence $F$ is also established in \cite{KKKMM24}.

The paper is structured as follows. We recall the definition  of $n$-preprojective algebras of an arbitrary algebra in Section~2. We study the skew group algebras, compatible bimodules and their induced bimodules in Section~3. We prove that the skew group algebra of a preprojective algebra is isomorphic to the preprojective algebra of the skew group algebra in Theorem~\ref{thm:pi}. In Section~4, we prove that any Morita equivalence between two algebras extends naturally to a Morita equivalence between their preprojective algebras; see Proposition~\ref{prop:Pi}. We study Weyl groups and Weyl monoids \cite{Ts} associated to quivers and Cartan matrices in Section~5. 

We recall the generalized preprojective algebra  and prove Proposition~A in Section~6. In final section, we prove Theorem~B, which is illustrated with an explicit  example.

Throughout this paper, we work over a field $\mathbb{K}$. Unadorned Hom and tensor functors are all over $\mathbb{K}$.  By default, a module means a unital left module. For an algebra $A$, we denote by $A\mbox{-Mod}$ the category of all $A$-modules.

\section{Tensor algebras and  preprojective algebras}

In this section, we fix the notation  and recall some well-known facts on tensor algebras and preprojective algebras.

Let $A$ be an algebra, and $V$ an $A$-$A$-bimodule on which $\mathbb{K}$ acts centrally. The associated \emph{tensor algebra} $T_A(V)$ is given by 
$$T_A(V)=A\oplus V\oplus V^{\otimes_A 2}\oplus \cdots \oplus V^{\otimes_A n}\oplus \cdots$$
where $V^{\otimes_A n}$ is the $n$-fold tensor product of $V$. 

Let $\mathcal{C}$ be any category with an endofunctor $E\colon \mathcal{C}\rightarrow \mathcal{C}$. By a \emph{representation} of $E$, we mean a pair $(X, \alpha)$ consisting of an object $X$ in $\mathcal{C}$ and a morphism $\alpha\colon E(X)\rightarrow X$. A morphism $f\colon (X, \alpha)\rightarrow (Y, \beta)$ between representations is given by a morphism $f\colon X\rightarrow Y$ in $\mathcal{C}$ satisfying
$$f\circ \alpha=\beta\circ E(f).$$
These data form the category of representations of $E$, denoted by ${\rm rep}(E)$; see \cite[p.469]{Rin} and  \cite[Subsection~2.1]{CL}. 

The following fact is standard. 

\begin{lem}\label{lem:rep}
Let $\mathcal{C}'$ be another category with an endofunctor $E'$. Assume that there is an equivalence $F\colon \mathcal{C}\rightarrow \mathcal{C}'$ of categories such that $FE$ is isomorphic to $E'F$. Then we have an induced equivalence $\widetilde{F}\colon {\rm rep}(E)\rightarrow {\rm rep}(E')$. 
\end{lem}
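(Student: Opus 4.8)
The goal is to transport a representation $(X,\alpha)$ of $E$ along the equivalence $F$ to produce a representation of $E'$, and to check this assignment is functorial and an equivalence. Fix a natural isomorphism $\eta\colon FE\xrightarrow{\;\sim\;}E'F$, which exists by hypothesis. The plan is to define $\widetilde{F}$ on objects by sending $(X,\alpha)$ with $\alpha\colon E(X)\to X$ to the pair $(F(X),\beta)$, where the structure morphism $\beta\colon E'(F(X))\to F(X)$ is the composite
\[
E'(F(X))\xrightarrow{\;\eta_X^{-1}\;}F(E(X))\xrightarrow{\;F(\alpha)\;}F(X).
\]
On a morphism $f\colon(X,\alpha)\to(Y,\gamma)$ in ${\rm rep}(E)$, I would simply set $\widetilde{F}(f)=F(f)$, regarded as a morphism $(F(X),\beta)\to(F(Y),\delta)$ between the associated representations.

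The key verifications are then routine but must be carried out in order. First I would check that $\widetilde{F}(f)=F(f)$ is indeed a morphism of representations, i.e. that $F(f)\circ\beta=\delta\circ E'(F(f))$; this follows by pasting the commuting square defining $f$ (namely $f\circ\alpha=\gamma\circ E(f)$, to which $F$ is applied) with the naturality square of $\eta$ at $f$, and the only subtlety is bookkeeping with $\eta^{-1}$ rather than $\eta$. Next, functoriality of $\widetilde{F}$ (preservation of identities and composition) is immediate from that of $F$, since $\widetilde{F}$ acts as $F$ on underlying morphisms.

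Finally, to see $\widetilde{F}$ is an equivalence I would produce a quasi-inverse from a quasi-inverse $G$ of $F$ together with the induced natural isomorphism $G E'\cong E G$; by the symmetric construction this yields $\widetilde{G}\colon{\rm rep}(E')\to{\rm rep}(E)$, and the natural isomorphisms $GF\cong{\rm id}_{\mathcal{C}}$ and $FG\cong{\rm id}_{\mathcal{C}'}$ lift to natural isomorphisms $\widetilde{G}\widetilde{F}\cong{\rm id}_{{\rm rep}(E)}$ and $\widetilde{F}\widetilde{G}\cong{\rm id}_{{\rm rep}(E')}$ once one checks that each component is compatible with the structure morphisms. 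I expect the only genuine obstacle to be this last compatibility check: verifying that the unit and counit isomorphisms of the adjoint equivalence $(F,G)$ respect the structure morphisms requires a short diagram chase relating $\eta$ to the natural isomorphism induced on the $G$ side, and care must be taken that the chosen isomorphisms $FE\cong E'F$ and $GE'\cong EG$ are mated compatibly. Everything else reduces to the naturality of $\eta$ and the defining equivalence properties of $F$.
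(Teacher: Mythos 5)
Your proposal is correct and follows essentially the same route as the paper: the paper fixes a natural isomorphism $\eta\colon E'F\rightarrow FE$ and sends $(X,\alpha)$ to $(F(X), F(\alpha)\circ\eta_X)$, which is exactly your construction up to the direction in which one names the isomorphism, and it likewise builds the quasi-inverse by the symmetric construction. The paper simply leaves the naturality and mate-compatibility checks you spell out as routine.
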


\begin{proof}
Assume that we are given a natural isomorphism $\eta\colon E'F\rightarrow FE$. The induced functor $\widetilde{F}$ sends a representation $(X, \alpha)$ of $E$ to $(F(X),  F(\alpha)\circ \eta_X)$ of $E'$. Similarly, one constructs a quasi-inverse of $\widetilde{F}$.
\end{proof}

Let $A$ be an algebra, and $V$ an $A$-$A$-bimodule. Consider the endofunctor 
$$V\otimes_A-\colon A\mbox{-Mod}\longrightarrow A\mbox{-Mod}.$$
There is an isomorphism of categories
\begin{align}\label{iso:TAV}
T_A(V)\mbox{-Mod}\longrightarrow {\rm rep}(V\otimes_A-),
\end{align}
which sends a $T_A(V)$-module $X$ to the representation $(X, \alpha)$, where $X$ is the underlying $A$-module and $\alpha(v\otimes_A x)=vx$ for $v\in V$ and $x\in X$; compare \cite[Lemma~2]{Rin}.  

Denote by $A^e=A\otimes A^{\rm op}$ the \emph{enveloping algebra} of $A$. We identify $A$-$A$-bimodules with left $A^e$-modules, which are also identified with right $A^e$-modules. 

Let $e$ and $f$ be two idempotents of $A$. Then $Ae\otimes fA$ is naturally an $A$-$A$-bimodule, which is cyclic and projective.  We have a canonical isomorphism of $A$-$A$-bimodules
\begin{align}\label{iso:bi-dual}
    {\rm Hom}_{A^e}(Ae\otimes fA, A^e)\longrightarrow Af\otimes eA,\; \theta\mapsto {\rm swap}(\theta(e\otimes f)).
\end{align}
Here, ${\rm swap}(a\otimes b)=b\otimes a$, and the $A$-$A$-bimodule structure of  ${\rm Hom}_{A^e}(Ae\otimes fA, A^e)$ is induced by the inner $A$-$A$-bimodule structure on $A^e$. Similarly, for each $n\geq 1$, the $n$-th extension space ${\rm Ext}^n_{A^e}(A, A^e)$ is naturally a right $A^e$-module, and thus an $A$-$A$-bimodule. 

The following fact is well known.

\begin{lem}\label{lem:fd}
Assume that the algebra $A$ is finite dimensional. Then we have an isomorphism of $A$-$A$-bimodules
$${\rm Ext}^n_{A^e}(A, A^e)\simeq {\rm Ext}_A^n(DA, A) $$
for each $n\geq 1$. 
\end{lem}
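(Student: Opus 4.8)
The plan is to compute both sides by means of a single finitely generated projective bimodule resolution of $A$, and then to identify the resulting complexes using the duality (\ref{iso:bi-dual}). Since $A$ is finite dimensional, so is the enveloping algebra $A^e$, and $A$ is a finitely generated $A^e$-module; hence there is a resolution $P_\bullet\to A$ in which each $P_i$ is a finite direct sum of bimodules of the form $Ae\otimes fA$ with $e,f$ idempotents. By definition ${\rm Ext}^n_{A^e}(A,A^e)=H^n({\rm Hom}_{A^e}(P_\bullet,A^e))$, and the right $A^e$-module structure on ${\rm Hom}_{A^e}(P_\bullet,A^e)$ induced by the inner structure on $A^e$ yields the claimed $A$-$A$-bimodule structure on the cohomology.

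The key step is to establish a natural isomorphism of $A$-$A$-bimodules
\begin{align*}
{\rm Hom}_{A^e}(M,A^e)\simeq {\rm Hom}_A(M\otimes_A DA,\, A)
\end{align*}
for finitely generated projective bimodules $M$, where $M\otimes_A DA$ is regarded as a left $A$-module via the left structure of $M$. It suffices to check this on the generators $M=Ae\otimes fA$: the left-hand side is $Af\otimes eA$ by (\ref{iso:bi-dual}), while on the right-hand side one computes $(Ae\otimes fA)\otimes_A DA\simeq Ae\otimes (fA\otimes_A DA)\simeq Ae\otimes D(Af)$, and then ${\rm Hom}_A(Ae\otimes D(Af),A)\simeq eA\otimes Af$ using ${\rm Hom}_A(Ae,A)\simeq eA$ together with the double-duality $D(D(Af))\simeq Af$. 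Verifying naturality in $e$ and $f$ and compatibility with the bimodule structures promotes this to an isomorphism of complexes ${\rm Hom}_{A^e}(P_\bullet,A^e)\simeq {\rm Hom}_A(P_\bullet\otimes_A DA,\, A)$.

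It then remains to recognise the right-hand complex. Each $P_i$ is projective, hence flat, as a right $A$-module, so $P_\bullet\otimes_A DA\to DA$ is exact: it computes ${\rm Tor}^A_\bullet(A,DA)$, which is concentrated in degree zero. Moreover, by the computation above, each term $(Ae\otimes fA)\otimes_A DA\simeq Ae\otimes D(Af)$ is projective as a left $A$-module. Thus $P_\bullet\otimes_A DA\to DA$ is a projective resolution of $DA$, whence $H^n({\rm Hom}_A(P_\bullet\otimes_A DA,A))={\rm Ext}^n_A(DA,A)$. Combined with the isomorphism of complexes of the previous paragraph, this gives the desired ${\rm Ext}^n_{A^e}(A,A^e)\simeq {\rm Ext}^n_A(DA,A)$ for all $n\geq 1$.

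I expect the main obstacle to be the careful bookkeeping of the several module structures through this chain of identifications—ensuring that (\ref{iso:bi-dual}), the isomorphism $fA\otimes_A DA\simeq D(Af)$, and the Hom-tensor identities are all tracked as maps of $A$-$A$-bimodules, and that they are compatible with the differentials of $P_\bullet$, so that one genuinely obtains an isomorphism of complexes rather than merely a termwise isomorphism. I would also note in passing that the correct side is $M\otimes_A DA$ rather than $DA\otimes_A M$: only the former sends the projective bimodules $Ae\otimes fA$ to left-projective modules, and hence produces a \emph{projective} resolution of $DA$ computing ${\rm Ext}^n_A(DA,A)$.
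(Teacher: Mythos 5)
Your proposal is correct, but it takes a genuinely more hands-on route than the paper. The paper's proof is two lines: the canonical map $A^e\rightarrow {\rm Hom}(DA,A)$, $a\otimes b\mapsto (\theta\mapsto \theta(b)a)$, is an isomorphism of left $A^e$-modules (this is where finite-dimensionality enters), so ${\rm Ext}^n_{A^e}(A,A^e)\simeq {\rm Ext}^n_{A^e}(A,{\rm Hom}(DA,A))$, and the latter is identified with ${\rm Ext}^n_A(DA,A)$ by the standard (Cartan--Eilenberg) comparison between Ext over $A^e$ with coefficients in ${\rm Hom}(U,V)$ and ${\rm Ext}^n_A(U,V)$. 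What you do is, in effect, re-prove that standard comparison: your observation that $P_\bullet\otimes_A DA\to DA$ stays exact (each $P_i$ being right-projective, the homology is ${\rm Tor}^A_\bullet(A,DA)$) and has left-projective terms $Ae\otimes D(Af)$, hence is a projective resolution of $DA$, is precisely the proof of the fact the paper quotes. Your version buys self-containedness and makes visible exactly where finite-dimensionality is used (the double duality $D(D(Af))\simeq Af$ and the finite-dimensionality of $D(Af)$); the paper buys brevity by citing the canonical isomorphism.

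One point in your write-up deserves more than ``bookkeeping'' status. Isomorphisms constructed separately on each $P_i$ do not automatically commute with the differentials, so it does \emph{not} suffice to check the identification on the generators $Ae\otimes fA$; you need a comparison map defined uniformly in $M$. Moreover, ``naturality in $e$ and $f$'' is not quite the right condition: the differentials of $P_\bullet$ are arbitrary bimodule maps between such projectives (e.g.\ combinations of left and right multiplications, as in (\ref{equ:bimod-res})), not maps induced by the idempotents. The efficient fix is to define, for \emph{every} bimodule $M$,
\begin{align*}
\Xi_M\colon {\rm Hom}_{A^e}(M,A^e)\longrightarrow {\rm Hom}_A(M\otimes_A DA, A),\qquad \Xi_M(\Phi)(m\otimes\theta)=\phi(\Phi(m))(\theta),
\end{align*}
where $\phi\colon A^e\rightarrow {\rm Hom}(DA,A)$ is the canonical map above; $\Xi$ is natural in $M$ because it is the Hom-tensor adjunction composed with $\phi$, and your computation on generators then verifies that $\Xi_M$ is an isomorphism for finitely generated projective $M$. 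With $\Xi$ in hand your argument closes---but note that at this point you have essentially reconstructed the paper's proof.
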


\begin{proof}
The canonical map $A^e\rightarrow {\rm Hom}(DA, A)$, sending $a\otimes b$ to $(\theta\mapsto \theta(b)a)$, is an isomorphism of left $A^e$-modules. Therefore, we identify ${\rm Ext}^n_{A^e}(A, A^e)$ with ${\rm Ext}^n_{A^e}(A, {\rm Hom}(DA, A))$, which is canonically isomorphic to ${\rm Ext}^n_{A}(DA, A)$. 
\end{proof}

In view of \cite[Theorem~A]{Rin} and Lemma~\ref{lem:fd}, the following definition is natural; compare \cite{GLS17}. We mention that its derived analogue is due to \cite[Subsection~4.1]{Ke11}.

\begin{defn}
Let $A$ be an algebra and $n\geq 2$. Then \emph{$n$-preprojective algebra} of $A$ is defined to be the tensor algebra $\Pi_n(A)=T_A({\rm Ext}^{n-1}_{A^e}(A, A^e))$.
\end{defn} 

 Let $Q=(Q_0, Q_1;s,t)$ be a finite quiver, where $Q_0$ is the finite set of vertices, $Q_1$ is the finite set of arrows and the two maps $s,t\colon Q_1\rightarrow Q_0$ assign to each arrow $\alpha$ its starting vertex $s(\alpha)$ and its terminating vertex $t(\alpha)$. A path $p=\alpha_n\cdots \alpha_2\alpha_1$ of length $n$ consists of $n$ consecutive arrows $\alpha_i$'s, that is, $t(\alpha_i)=s(\alpha_{i+1})$ for $1\leq i\leq n-1$. Here, we write the concatenation from right to left. We observe that a path of length $1$ is just an arrow. For each vertex $i$, we associate a trivial path $e_i$ of length $0$. Denote by $\mathbb{K}Q$ the \emph{path algebra}, which has a basis given by all paths in $Q$ and whose multiplication is given by the concatenation of paths. 

Denote by $\overline{Q}$ the \emph{double quiver} of $Q$, which is obtained from $Q$ by adding for each arrow $\alpha\in Q_1$ an inverse arrow $\alpha^*$. Following \cite{Rin}, the \emph{preprojective algebra} of $Q$ is defined by
$$\Pi(Q)=\mathbb{K}\overline{Q}/{(\sum_{\alpha\in Q_1} \alpha \alpha^*-\alpha^*\alpha)}.$$
We mention the implicit appearance of the preprojective algebra in \cite[Section~12]{Lu1991}.

The following result is well known; compare \cite[Theorem~A]{Rin}. 

\begin{lem}\label{lem:quiver}
Let $Q$ be any  finite quiver. Then there is an isomorphism $\Pi_2(\mathbb{K}Q)\simeq \Pi(Q)$ of algebras.
\end{lem}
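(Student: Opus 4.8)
The plan is to compute the bimodule $V:={\rm Ext}^1_{A^e}(A,A^e)$ explicitly for $A:=\mathbb{K}Q$ and then show that its tensor algebra over $A$ is $\Pi(Q)$. The starting point is the standard projective bimodule resolution of a path algebra, which is available for an arbitrary finite quiver:
$$0\to \bigoplus_{\alpha\in Q_1} Ae_{t(\alpha)}\otimes e_{s(\alpha)}A \xrightarrow{\ d\ } \bigoplus_{i\in Q_0} Ae_i\otimes e_iA \xrightarrow{\ \mu\ } A\to 0,$$
where $\mu$ is the multiplication map and $d$ sends the generator $e_{t(\alpha)}\otimes e_{s(\alpha)}$ of the $\alpha$-summand to $\alpha\otimes e_{s(\alpha)}-e_{t(\alpha)}\otimes \alpha$, the two terms lying in the $s(\alpha)$- and $t(\alpha)$-summands respectively. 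As the two leftmost terms are projective bimodules, this computes ${\rm Ext}^{*}_{A^e}(A,A^e)$; in particular $V={\rm coker}(d^*)$, where $d^*$ is obtained by applying ${\rm Hom}_{A^e}(-,A^e)$. Note that no finiteness of dimension is needed here, so the argument will cover all finite quivers (not only the acyclic case treated by Lemma~\ref{lem:fd}).

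Next I would dualize using the canonical isomorphism (\ref{iso:bi-dual}). It identifies ${\rm Hom}_{A^e}(Ae_i\otimes e_iA,A^e)$ with $Ae_i\otimes e_iA$, and ${\rm Hom}_{A^e}(Ae_{t(\alpha)}\otimes e_{s(\alpha)}A,A^e)$ with $Ae_{s(\alpha)}\otimes e_{t(\alpha)}A$. The swap of idempotents is the crucial feature: the dual of the $\alpha$-summand is the free bimodule on a generator running from $t(\alpha)$ to $s(\alpha)$, i.e. on the reversed arrow $\alpha^*$. Under these identifications the dual complex reads
$$0\to \bigoplus_{i\in Q_0} Ae_i\otimes e_iA \xrightarrow{\ d^*\ } \bigoplus_{\alpha\in Q_1} Ae_{s(\alpha)}\otimes e_{t(\alpha)}A \to 0,$$
and the heart of the proof is to evaluate $d^*$ on the generators: one checks that $d^*(e_i\otimes e_i)$ corresponds, up to an overall sign, to $\sum_{\alpha:\,t(\alpha)=i}\alpha\alpha^*-\sum_{\alpha:\,s(\alpha)=i}\alpha^*\alpha$, that is, to $e_i\rho e_i$ with $\rho=\sum_{\alpha\in Q_1}(\alpha\alpha^*-\alpha^*\alpha)$.

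Finally I would pass to tensor algebras. The target bimodule $\bigoplus_{\alpha}Ae_{s(\alpha)}\otimes e_{t(\alpha)}A$ is free over $A$ on the reversed arrows, so $T_A\big(\bigoplus_{\alpha}Ae_{s(\alpha)}\otimes e_{t(\alpha)}A\big)\cong \mathbb{K}\overline{Q}$. Using the standard fact that for a surjection of bimodules $F\twoheadrightarrow V$ with kernel $U$ the induced map $T_A(F)\twoheadrightarrow T_A(V)$ is surjective with kernel the two-sided ideal generated by $U$, the presentation $V={\rm coker}(d^*)$ yields $\Pi_2(\mathbb{K}Q)=T_A(V)\cong \mathbb{K}\overline{Q}/\langle {\rm im}(d^*)\rangle$. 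Since ${\rm im}(d^*)$ is generated as a bimodule by the elements $d^*(e_i\otimes e_i)$, this ideal equals $\langle e_i\rho e_i\mid i\in Q_0\rangle=(\rho)$ (using $\rho=\sum_i e_i\rho e_i$ and $e_i\rho e_i\in A\rho A$), whence $\Pi_2(\mathbb{K}Q)\cong \mathbb{K}\overline{Q}/(\rho)=\Pi(Q)$.

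I expect the main obstacle to be the explicit computation of $d^*$ together with the correct bookkeeping of signs and idempotent placements induced by the swap in (\ref{iso:bi-dual}); the rest is formal once this identification of $d^*(e_i\otimes e_i)$ with the local component $e_i\rho e_i$ of the preprojective relation is nailed down.
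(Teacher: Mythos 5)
Your proposal is correct and follows essentially the same route as the paper's proof: the same canonical bimodule resolution of $\mathbb{K}Q$, dualization via the isomorphism (\ref{iso:bi-dual}) with the idempotent swap producing the reversed arrows, identification of the cokernel presentation of ${\rm Ext}^1_{A^e}(A,A^e)$, and recognition of the tensor algebra as $\mathbb{K}\overline{Q}$. The only difference is that you spell out the final step (that a bimodule presentation $V={\rm coker}(d^*)$ induces the quotient $T_A(V)\cong \mathbb{K}\overline{Q}/\langle {\rm im}(d^*)\rangle$ and that $\langle e_i\rho e_i \mid i\in Q_0\rangle = (\rho)$), which the paper compresses into ``the required isomorphism follows immediately.''
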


\begin{proof}
     Write $A=\mathbb{K}Q$.    We have a canonical bimodule projective resolution of $A$.
  \begin{align}\label{equ:bimod-res}
  0\longrightarrow \bigoplus_{\alpha\in Q_1} A e_{t(\alpha)}\otimes \mathbb{K}\alpha \otimes e_{s(\alpha)}A\stackrel{\partial}\longrightarrow \bigoplus_{i\in Q_0} Ae_i\otimes e_iA \longrightarrow A\longrightarrow 0 \end{align}
   Here, $\partial(e_{t(\alpha)}\otimes \alpha \otimes e_{s(\alpha)})=\alpha\otimes e_{s(\alpha)}-e_{t(\alpha)}\otimes \alpha$ and the unnamed arrow on the right is given by the multiplication in $A$. Applying ${\rm Hom}_{A^e}(-, A^e)$ to this sequence and using the isomorphism (\ref{iso:bi-dual}), we infer that ${\rm Ext}^1_{A^e}(A, A^e)$ is isomorphic to the cokernel of the following morphism
  \begin{align}\label{equ:quiver}
      \bigoplus_{i\in Q_0} Ae_i\otimes e_iA   \stackrel{\partial'} \longrightarrow \bigoplus_{\alpha\in Q_1} A e_{s(\alpha)}\otimes \mathbb{K}\alpha^* \otimes e_{t(\alpha)}A,
  \end{align}
   which is given by 
   $$\partial'(e_i\otimes e_i)=\sum_{\{\alpha\in Q_1\;|\; s(\alpha)=i \}}e_{i}\otimes \alpha^*\otimes \alpha-\sum_{\{\beta\in Q_1\; |\; t(\beta)=i\}} \beta\otimes \beta^*\otimes e_{i}.$$
   Here for each $\alpha\in Q_1$, we identify $Ae_{t(\alpha)}\otimes \mathbb{K}\alpha\otimes e_{s(\alpha)}A$ with $Ae_{t(\alpha)}\otimes e_{s(\alpha)}A$, and $Ae_{s(\alpha)}\otimes \mathbb{K}\alpha^*\otimes e_{t(\alpha)}A$ with $Ae_{s(\alpha)} \otimes e_{t(\alpha)}A$.   
   
   We observe that the tensor algebra $$T_A(\bigoplus_{\alpha\in Q_1} A e_{s(\alpha)}\otimes \mathbb{K}\alpha^* \otimes e_{t(\alpha)}A)$$ is naturally isomorphic to the path algebra $\mathbb{K}\overline{Q}$ of the double quiver $\overline{Q}$. Then the required isomorphism follows immediately. 
\end{proof}

\section{Skew group algebras and induced bimodules}

In this section, we will recall basic facts on skew group algebras, and study their induced bimodules. We prove that the $n$-preprojective algebra of a skew group algebra is isomorphic to the skew group algebra of the $n$-preprojective algebra; see Theorem~\ref{thm:pi}.

We emphasize that many results in this section are implicitly due to \cite{Le}. We provide full proofs for completeness, since the setting there  is very different.  We mention related work on skew group algebras of  quiver algebras \cite{Dem, GP, Thi}.

\subsection{Compatible bimodules and induced bimodules} 

Let $A$ be an algebra. Denote by $\mathbf{I}(A)$ the \emph{ideal monoid} of $A$, which consists of two-sided ideals of $A$ and whose multiplication is given by the multiplication between ideals. Let $G$ be a finite group, which is written multiplicatively and whose identity is denoted by $1_G$. 

We fix a $G$-action $\rho\colon G\rightarrow {\rm Aut}(A)$ on $A$ by algebra automorphisms.  Write $\rho(g)(a)=g(a)$ for any $g\in G$ and $a\in A$. The \emph{skew group algebra} is given by $A\# G=A\otimes \mathbb{K}G$, whose typical element is denoted by $a\# g$ and whose multiplication is defined by
$$(a\# g)(b\# h)=ag(b)\# gh.$$
The following identity
\begin{align}\label{equ:skew}
(1_A\#g)(a\#1_G)(1_A\#g^{-1})=g(a)\# 1_G
\end{align}
will be used. We view $A$ as a subalgebra of $A\#G$ by identifying $a$ with $a\#1_G$. Consider the projection ${\rm pr}\colon A\#G\rightarrow A$, which sends $a\#g$ to $\delta_{g, 1_G}a$.

\begin{lem}\label{lem:Hom-skew}
    Let $M$ be a left $A\#G$-module. The the following two statements hold.
\begin{enumerate}
\item The space ${\rm Hom}_A(M, A)$ becomes a right $A\#G$-module in the following manner: for any $\theta\in {\rm Hom}_A(M, A)$, the element $\theta(a\#g)\in {\rm Hom}_A(M, A)$ sends each $m\in M$ to $g^{-1}(\theta((1_A\#g)m)a)$.  
    \item The projection above induces an isomorphism of right $A\#G$-modules
    $${\rm Hom}_{A\#G}(M, A\#G)\longrightarrow {\rm Hom}_A(M, A).$$
\end{enumerate}
\end{lem}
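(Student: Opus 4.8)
The plan is to prove both statements at once by writing the bijection in (2) explicitly together with its inverse, and then transporting the canonical right $A\#G$-module structure from $\mathrm{Hom}_{A\#G}(M,A\#G)$ to $\mathrm{Hom}_A(M,A)$; identifying the transported action with the formula in (1) establishes (1) and (2) simultaneously.

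First I would define $\mathrm{pr}_*\colon \mathrm{Hom}_{A\#G}(M,A\#G)\to \mathrm{Hom}_A(M,A)$ by $f\mapsto \mathrm{pr}\circ f$. Since $\mathrm{pr}$ is a homomorphism of $A$-bimodules and $f$ is in particular left $A$-linear, $\mathrm{pr}\circ f$ lies in $\mathrm{Hom}_A(M,A)$. To see that $\mathrm{pr}_*$ is bijective, I would use the decomposition $A\#G=\bigoplus_{g\in G}A\#g$ and write $f(m)=\sum_{g}f_g(m)\#g$ with $f_g\colon M\to A$. Comparing components in the identity $f((1_A\#h)m)=(1_A\#h)f(m)$ yields
$$f_{hg}\big((1_A\#h)m\big)=h\big(f_g(m)\big)\qquad(g,h\in G),$$
which forces $f_g(m)=g\big(f_{1_G}((1_A\#g^{-1})m)\big)$; as $\mathrm{pr}\circ f=f_{1_G}$, this shows that $f$ is determined by $\mathrm{pr}\circ f$, so $\mathrm{pr}_*$ is injective. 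For surjectivity I would, given $\theta\in\mathrm{Hom}_A(M,A)$, set
$$f_\theta(m)=\sum_{g\in G}g\big(\theta((1_A\#g^{-1})m)\big)\#g,$$
check that $f_\theta$ is left $A\#G$-linear, and observe that $\mathrm{pr}\circ f_\theta=\theta$. It suffices to test linearity on the generators $b\#1_G$ and $1_A\#h$: for $b\#1_G$ one uses $(1_A\#g^{-1})(b\#1_G)=(g^{-1}(b)\#1_G)(1_A\#g^{-1})$ and the $A$-linearity of $\theta$, and for $1_A\#h$ one reindexes the sum by $g\mapsto hg$.

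Finally I would transport the canonical right $A\#G$-action — the one induced by right multiplication on $A\#G$, i.e. $(f\cdot x)(m)=f(m)x$ — along $\mathrm{pr}_*$, and check that it coincides with the formula in (1). Concretely, for $x=a\#g$ and $f(m)=\sum_h f_h(m)\#h$, the element $f(m)(a\#g)=\sum_h f_h(m)h(a)\#hg$ has $\#1_G$-component $f_{g^{-1}}(m)\,g^{-1}(a)$, whereas the formula in (1) applied to $\theta=\mathrm{pr}\circ f=f_{1_G}$ gives $g^{-1}\big(f_{1_G}((1_A\#g)m)\,a\big)=g^{-1}\big(g(f_{g^{-1}}(m))\,a\big)=f_{g^{-1}}(m)\,g^{-1}(a)$, using the displayed relation with $h=g$. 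The two agree, so $\mathrm{pr}_*$ intertwines the canonical action with the action in (1); since the former is a genuine module structure and $\mathrm{pr}_*$ is bijective, the formula in (1) is a well-defined right $A\#G$-module structure and $\mathrm{pr}_*$ is an isomorphism of right $A\#G$-modules.

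The routine parts are the two linearity checks for $f_\theta$; the main point requiring care is the systematic bookkeeping of the $g$- versus $g^{-1}$-twists, where the commutation rule $(1_A\#g)(b\#1_G)=(g(b)\#1_G)(1_A\#g)$ and the relation $\rho(g)^{-1}\rho(g)=\mathrm{id}$ must be applied in the correct order; this is also exactly what makes the transported action match the stated formula rather than some variant of it.
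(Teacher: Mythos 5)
Your proof is correct, and its computational core coincides with the paper's: your $f_\theta(m)=\sum_{g\in G}g\bigl(\theta((1_A\#g^{-1})m)\bigr)\#g$ is exactly the inverse map $\theta'$ that the paper writes down (citing Reiten--Riedtmann for the details), and your component bookkeeping $f_{hg}\bigl((1_A\#h)m\bigr)=h\bigl(f_g(m)\bigr)$ is the verification that citation encapsulates. Where you differ is in the logical organization: the paper treats (1) as a routine direct verification that the stated formula satisfies the right-module axioms, and then proves (2) separately; you instead prove the bijectivity of $\mathrm{pr}_*$ first and obtain (1) for free by transporting the canonical right $A\#G$-structure of $\mathrm{Hom}_{A\#G}(M,A\#G)$ along this bijection, checking that the transported action agrees with the stated formula. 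This buys you something real: the associativity axiom $(\theta x)y=\theta(xy)$, which is the fiddly part of the "routine" check in (1), never has to be verified by hand, since it is inherited from right multiplication in $A\#G$. The cost is that your argument for (1) is contingent on having (2) in full, whereas the paper's two statements are independent; but since you prove (2) completely and self-containedly, nothing is circular, and the $g$-versus-$g^{-1}$ twists are all handled correctly (in particular the identities $f_{1_G}\bigl((1_A\#g)m\bigr)=g\bigl(f_{g^{-1}}(m)\bigr)$ and $(1_A\#g^{-1})(b\#1_G)=(g^{-1}(b)\#1_G)(1_A\#g^{-1})$ are used exactly where they are needed).
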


\begin{proof}
    The proof of (1) is routine. For (2), we refer to \cite[Subsection~1.1 (B)]{RR}. The inverse map sends $\theta\in {\rm Hom}_A(M, A)$ to $\theta'\colon M\rightarrow A\#G$, which is given by $(m\mapsto \sum_{g\in G} g\theta((1\#g^{-1})m)\#g)$; compare \cite[Lemma~1.2]{RR}.
\end{proof}

A two-sided ideal $I$ of $A$ is called \emph{$G$-invariant} if $g(I)=I$ for all $g\in G$. Denote by $\mathbf{I}(A)^G$ the sub monoid of $\mathbf{I}(A)$ formed by $G$-invariant ideals.

 We observe that the decomposition $A\#G=\oplus_{g\in G}(A\#g)$ makes $A\#G$ a  $G$-graded algebra. A two-sided ideal $J$ of $A\#G$ is \emph{$G$-graded} if $J=\oplus_{g\in G}(J\cap (A\#g))$. Denote by $\mathbf{I}(A\#G)_G$ the sub monoid of $\mathbf{I}(A\#G)$ formed by $G$-graded ideals.

The following results are elementary.

\begin{prop}\label{prop:inv-ideal}
 Let $I$ be a two-sided ideal of $A$ and $J$ a two-sided ideal of $A\#G$. Then the following results hold.
 \begin{enumerate}
  \item[(1)] The subspace $\{a\in A\; |\; a\#1_G\in J\}$ is a $G$-invariant two-sided ideal of $A$.
 \item[(2)] The subspace $I\#G=\{\sum_{g\in G}a_g\#g\; |\; a_g\in I\}$ is a two-sided ideal of $A\#G$ if and only if the ideal $I$ is $G$-invariant.
 \item[(3)] There is an isomorphism $\mathbf{I}(A)^G\rightarrow \mathbf{I}(A\#G)_G$ of monoids, sending any $G$-invariant ideal $I'$ to $I'\#G$.
 \end{enumerate}
\end{prop}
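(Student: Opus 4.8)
The plan is to prove the three parts in order, working throughout with the $G$-grading $A\#G=\oplus_{g\in G}(A\#g)$ and the conjugation identity (\ref{equ:skew}). All the verifications are direct, so I will record only the structural points and omit the routine subspace checks. For (1), write $\widetilde{J}=\{a\in A\mid a\#1_G\in J\}$. It is a two-sided ideal of $A$ since for $a\in\widetilde{J}$ and $b\in A$ one has $(b\#1_G)(a\#1_G)=ba\#1_G\in J$ and $(a\#1_G)(b\#1_G)=ab\#1_G\in J$, using that $J$ is two-sided. For $G$-invariance I would apply (\ref{equ:skew}): the conjugate $(1_A\#g)(a\#1_G)(1_A\#g^{-1})=g(a)\#1_G$ lies in $J$, whence $g(a)\in\widetilde{J}$, and running the same argument with $g^{-1}$ gives $g(\widetilde{J})=\widetilde{J}$.

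For (2), the decisive computations are the two products
\begin{align*}
(b\#h)\Bigl(\sum_g a_g\#g\Bigr)=\sum_g bh(a_g)\#hg,\qquad \Bigl(\sum_g a_g\#g\Bigr)(b\#h)=\sum_g a_g\, g(b)\#gh.
\end{align*}
If $I$ is $G$-invariant, then $h(a_g)\in I$, and since $I$ is already a two-sided ideal of $A$ both products land in $I\#G$, so $I\#G$ is a two-sided ideal. Conversely, if $I\#G$ is a two-sided ideal, then for $a\in I$ the conjugate $(1_A\#g)(a\#1_G)(1_A\#g^{-1})=g(a)\#1_G$ lies in $I\#G$, and reading off its $1_G$-component forces $g(a)\in I$, so $I$ is $G$-invariant.

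For (3), the assignment $I'\mapsto I'\#G$ is well defined by (2), and its image is $G$-graded since $(I'\#G)\cap(A\#g)=I'\#g$. It is a monoid homomorphism: it sends $A$ to $A\#G$; the inclusion $(I'\#G)(J'\#G)\subseteq(I'J')\#G$ follows from the product formula above together with $g(b_h)\in J'$, while the reverse inclusion follows by writing any $c\in I'J'$ as $c=\sum_i a_ib_i$ and observing $c\#k=\sum_i(a_i\#1_G)(b_i\#k)$. Injectivity is immediate from the $1_G$-component. For surjectivity, given a $G$-graded ideal $J$ I would take $I'=\widetilde{J}$ from part (1) and show $J\cap(A\#g)=I'\#g$ for every $g$: here $a\#g\in J$ is equivalent to $a\#1_G=(a\#g)(1_A\#g^{-1})\in J$, i.e.\ to $a\in I'$, using that $1_A\#g$ is a unit and that $J$ is a right ideal; summing over $g$ and invoking the $G$-grading of $J$ yields $J=I'\#G$.

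Since every statement reduces to elementary manipulations in the graded algebra $A\#G$, I do not anticipate a genuine obstacle. The only points requiring a little care are the reverse inclusion in the product formula and, in the surjectivity argument, the fact that each graded component $J\cap(A\#g)$ is governed by the single ideal $\widetilde{J}$; both rest on the invertibility of the group-like elements $1_A\#g$ in $A\#G$.
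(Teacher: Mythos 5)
Your proof is correct and follows essentially the same route as the paper's (much terser) argument: the conjugation identity (\ref{equ:skew}) for part (1), direct graded computations for part (2), and for part (3) the same inverse map $J\mapsto\{a\in A\mid a\#1_G\in J\}$ used in the paper. The extra details you supply (multiplicativity of $I'\mapsto I'\#G$, the grading of $I'\#G$, and the component-wise surjectivity argument) are exactly the verifications the paper dismisses as immediate, and they check out.
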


\begin{proof}
We use (\ref{equ:skew}) to verify (1). The ``only if" part of (2) follows from (1), and the ``if" part is straightforward.  

The isomorphism in (3) follows immediately  from (1) and (2), whose inverse map sends any $G$-graded ideal $J'$ of $A\#G$ to $\{a\in A\; |\; a\#1_G\in J'\}$, which belongs to $\mathbf{I}(A)^G$ by (1).
\end{proof}

The group $G$ acts on the enveloping algebra $A^e$ by algebra automorphisms, that is, $g(a\otimes b)=g(a)\otimes g(b)$. Denote by $\Delta=A^e\#G$ the corresponding skew group algebra. We observe an algebra embedding 
\begin{align}\label{diag-sub}
    \Delta\hookrightarrow (A\#G)^e, (a\otimes b)\#g\mapsto (a\#g)\otimes (g^{-1}(b)\#g^{-1}).
\end{align}
In view of this embedding, we might call $\Delta$   the \emph{diagonal subalgebra} of $(A\#G)^e$; compare \cite[Subsection~3.1]{Le}.   We observe that $(A\otimes G)^e$ is free both  as a left $\Delta$-module and a right $\Delta$-module. 
 We have an isomorphism of algebra
\begin{align}\label{iso:diag}
    \Delta\longrightarrow \Delta^{\rm op}, \; (a\otimes b)\#g\mapsto (g^{-1}(b)\otimes g^{-1}(a))\#g^{-1}.
\end{align}

The following notion is implicitly due to \cite[Subsection~3.1]{Le}.

\begin{defn}
By a \emph{$G$-compatible $A$-$A$-bimodule}, we mean an $A$-$A$-bimodule $M$ with a $\mathbb{K}$-linear $G$-action satisfying that
$$g(amb)=g(a)g(m)g(b)$$
for any $a, b\in A$ and $m\in M$. Here, we denote by $g(amb)$ and $g(m)$ the $g$-actions on the elements $amb$ and $m$, respectively.
\end{defn}

 \begin{rem}
     We observe that a $G$-compatible $A$-$A$-bimodule structure is equivalent to a left $\Delta$-module structure, and is also equivalent to a right $\Delta$-module structure; compare (\ref{iso:diag}). More precisely, let $M$ be a $G$-compatible $A$-$A$-bimodule. Then $M$ is naturally a left $\Delta$-module with the following action
$$((a\otimes b)\#g) m= ag(m)b.$$
Similarly, $M$ is naturally a right $\Delta$-module given by
$$m((a\otimes b)\#g) = g^{-1}(bma).$$
 \end{rem}

\begin{defn}
Let $M$ be a $G$-compatible $A$-$A$-bimodule. The \emph{induced bimodule} $M\#G=M\otimes \mathbb{K}G$ is a bimodule over $A\#G$ defined as follows
$$(a\#g)(m\#h)(a'\#g')=ag(m)gh(a')\#ghg'$$
for any $a, a'\in A$, $m\in M$ and $g, h, g'\in G$.
\end{defn}

The following remark justifies the terminology above.

\begin{rem}\label{rem:induced}
Let $M$ be a $G$-compatible $A$-$A$-bimodule. Then there is an isomorphism
$$(A\#G)\otimes_A M\longrightarrow M\#G, \quad (a\#g)\otimes_A m\mapsto ag(m)\#g$$
of left $A\#G$-modules. Therefore, the left $A\#G$-module structure on $M\#G$ is induced from the left $A$-module structure on $M$. A similar remark works on the right side. 
\end{rem}

\begin{exm}
{\rm Let $I$ be a $G$-invariant two-sided ideal of $A$. Then $G$ acts on $I$. Therefore, as an $A$-$A$-bimodule, $I$ is $G$-compatible. We observe that the corresponding induced bimodule structure on $I\#G$ is the same as the one inherited from the two-sided ideal $I\#G$ of $A\#G$.}
\end{exm}

The following results are essentially due to \cite[Lemma~3.1.1]{Le}.

\begin{lem}\label{lem:induced}
    Let $M$ be a $G$-compatible $A$-$A$-bimodule. Then the following statements hold.
    \begin{enumerate}
        \item There is an isomorphism $(A\# G)^e\otimes_\Delta M\simeq M\#G$
        of left $(A\#G)^e$-modules.
        \item There is an isomorphism $M\otimes_\Delta (A\# G)^e \simeq M\#G$
        of right $(A\#G)^e$-modules.
    \end{enumerate}
\end{lem}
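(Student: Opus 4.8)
The plan is to prove statement (1) directly by constructing an explicit isomorphism $(A\#G)^e\otimes_\Delta M\to M\#G$ of left $(A\#G)^e$-modules, and then to deduce statement (2) formally using the anti-isomorphism data already available, namely the isomorphism $\Delta\to\Delta^{\rm op}$ of (\ref{iso:diag}) together with the identification of $M$ as both a left and a right $\Delta$-module described in the remark following the definition of $G$-compatible bimodules.

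For (1), I would first recall that $M\#G$ is a left $(A\#G)^e$-module, equivalently an $(A\#G)$-$(A\#G)$-bimodule, via the formula in the definition of the induced bimodule. The element $(1_A\#g)\otimes(1_A\#g')\in(A\#G)^e$ acts on the generator $1\otimes_\Delta m$ through the embedding (\ref{diag-sub}) of $\Delta$ into $(A\#G)^e$, so the natural candidate map should send $(a\#g)\otimes(a'\#g')\otimes_\Delta m$ to the result of letting these act on $g'{}^{-1}$ applied to $m$; concretely I expect the assignment
\begin{align*}
(a\#g)\otimes(a'\#g')\otimes_\Delta m\longmapsto (a\#g)\,(m\#1_G)\,(a'\#g')
\end{align*}
computed in $M\#G$, which upon expansion gives an element of the form $a\,g(m)\,gg'{}^{-1}(a')\#\,\cdots$ after normalizing by the group elements. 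I would then check this is well defined, i.e. that it is balanced over $\Delta$ under the embedding (\ref{diag-sub}): this is exactly the computation verifying that moving a factor $(a_0\otimes b_0)\#h\in\Delta$ across the tensor, which on the left side rewrites as $(a_0\#h)\otimes(h^{-1}(b_0)\#h^{-1})$ by (\ref{diag-sub}), matches the $\Delta$-action $m\mapsto h(m)$ (twisted appropriately) on the right factor. Since $(A\#G)^e$ is free as a right $\Delta$-module (as noted in the excerpt just before (\ref{iso:diag})), a $\mathbb{K}$-basis of $(A\#G)^e$ over $\Delta$ lets me write down the inverse map explicitly and confirm bijectivity, while left $(A\#G)^e$-linearity is immediate from the construction.

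For (2), rather than repeating the computation I would invoke the isomorphism (\ref{iso:diag}) of algebras $\Delta\simeq\Delta^{\rm op}$ together with the analogous anti-automorphism structure on $(A\#G)^e$. The point is that $M\#G$ carries a right $(A\#G)^e$-module structure dual to its left one, and $M$ as a right $\Delta$-module is related to $M$ as a left $\Delta$-module through (\ref{iso:diag}); transporting the isomorphism of (1) along these identifications yields $M\otimes_\Delta(A\#G)^e\simeq M\#G$ as right $(A\#G)^e$-modules. Alternatively, one constructs the mirror map $m\otimes_\Delta(a\#g)\otimes(a'\#g')\mapsto(a\#g)(m\#1_G)(a'\#g')$ and verifies balancedness and bijectivity by the same bookkeeping, using that $(A\#G)^e$ is also free as a left $\Delta$-module.

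The main obstacle I anticipate is purely notational rather than conceptual: keeping the twisting by group elements straight when checking that the candidate map in (1) is balanced over the diagonal subalgebra $\Delta$. The embedding (\ref{diag-sub}) introduces the asymmetric substitution $(a\otimes b)\#g\mapsto(a\#g)\otimes(g^{-1}(b)\#g^{-1})$, and the $G$-compatibility identity $g(amb)=g(a)g(m)g(b)$ must be deployed at exactly the right moment to cancel these twists. Once the balancing is verified the bijectivity is forced by the freeness statements, so the entire difficulty concentrates in that one verification, which I would carry out on generators and then extend by bilinearity.
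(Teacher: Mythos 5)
Your plan for part (1) is essentially the paper's own proof with the omitted details filled in: the paper simply states the explicit isomorphism $((a\#g)\otimes (b\#h))\otimes_\Delta m\mapsto ag(mb)\#gh$ and omits verification, and your candidate map coincides with it, since expanding in the induced bimodule gives $(a\#g)(m\#1_G)(a'\#g')=a\,g(m)\,g(a')\#gg'=a\,g(ma')\#gg'$. (Two small slips here: the expansion involves $g(a')$, not $gg'^{-1}(a')$, and the freeness argument should use a $\Delta$-basis of $(A\#G)^e$, e.g.\ $\{(1_A\#1_G)\otimes(1_A\#k)\}_{k\in G}$ on the right, not a $\mathbb{K}$-basis; this basis makes the inverse explicit, $m\#k\mapsto ((1_A\#1_G)\otimes(1_A\#k))\otimes_\Delta m$.) Your balancedness check via the embedding (\ref{diag-sub}) and $G$-compatibility is exactly the computation the paper suppresses, and it goes through.

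For part (2) the paper does not transport structure; it writes a second explicit formula, $m\otimes_\Delta((a\#g)\otimes (b\#h))\mapsto bh(ma)\#hg$, which is $(b\#h)(m\#1_G)(a\#g)$ --- note the two tensor factors act on \emph{opposite} sides, because the right $(A\#G)^e$-action of $x\otimes y$ on a bimodule is $n\mapsto ynx$. Your primary route, transporting (1) along (\ref{iso:diag}) and the swap $x\otimes y\mapsto y\otimes x$ of $(A\#G)^e$, is a legitimate alternative, but it hinges on a compatibility you should state and verify: the embedding $\iota$ of (\ref{diag-sub}) intertwines (\ref{iso:diag}) with the swap, i.e.\ for $\delta=(a\otimes b)\#g$ both $\iota$ applied to the image of $\delta$ under (\ref{iso:diag}) and ${\rm swap}(\iota(\delta))$ equal $(g^{-1}(b)\#g^{-1})\otimes(a\#g)$; this is precisely what makes $m\otimes_\Delta z\mapsto {\rm swap}(z)\otimes_\Delta m$ well defined over $\Delta$. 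Your fallback ``mirror map'', however, is wrong as written: $m\otimes_\Delta((a\#g)\otimes(a'\#g'))\mapsto (a\#g)(m\#1_G)(a'\#g')$ is not right $(A\#G)^e$-linear, since for $z=x\otimes y$ and $w=u\otimes v$ it sends $m\otimes_\Delta zw$ to $xu(m\#1_G)vy$, whereas acting by $w$ after the map gives $vx(m\#1_G)yu$; so ``the same bookkeeping'' would not close up. The correct direct formula reverses the factors, recovering the paper's map.
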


In (1), we view $M$ as a left $\Delta$-module. Then using the algebra embedding (\ref{diag-sub}), we have the induced left module $(A\# G)^e\otimes_\Delta M$. A similar remark holds for (2). 

\begin{proof}
 The isomorphism in (1) sends $((a\#g)\otimes (b\#h))\otimes_\Delta m$ to $ag(mb)\#gh$. The isomorphism in (2) sends $m\otimes_\Delta ((a\#g)\otimes (b\#h))$ to $bh(ma)\#hg$. We omit the details. 
\end{proof}

Let $M$ and $N$ be two $G$-compatible $A$-$A$-bimodules. Then the $A$-$A$-bimodule $M\otimes_A N$ is $G$-compatible with the diagonal $G$-action. The proof of the following result is routine. 

\begin{lem}\label{lem:ind-tensor}
Let $M$ and $N$ be two $G$-compatible $A$-$A$-bimodules. Then there is an isomorphism of $(A\#G)$-$(A\#G)$-bimodules
$$(M\otimes_A N)\#G \longrightarrow (M\#G)\otimes_{A\#G} (N\#G),$$
 which sends $(m\otimes_A n)\#g$ to $(m\#1_G)\otimes_{A\#G}(n\#g)$.\hfill $\square$
\end{lem}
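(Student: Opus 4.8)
The plan is to exhibit the stated assignment as one half of a mutually inverse pair of $\mathbb{K}$-linear maps and to check that the forward map respects both $A\#G$-actions. A more functorial route is also available: combining the one-sided identifications $M\#G\cong M\otimes_A(A\#G)$ and $N\#G\cong (A\#G)\otimes_A N$ from Remark~\ref{rem:induced} with $(A\#G)\otimes_{A\#G}(A\#G)\cong A\#G$ gives $(M\#G)\otimes_{A\#G}(N\#G)\cong M\otimes_A(A\#G)\otimes_A N\cong (M\otimes_A N)\#G$. However, tracking the bimodule structure through that chain is no shorter than a direct computation, so I would argue directly. The only inputs needed are the defining formula $(a\#g)(m\#h)(a'\#g')=ag(m)gh(a')\#ghg'$ for induced bimodules, the $G$-compatibility $g(amb)=g(a)g(m)g(b)$, and the balancing relations in $M\otimes_A N$.

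First I would check that
$$\Phi\colon (M\otimes_A N)\#G\longrightarrow (M\#G)\otimes_{A\#G}(N\#G),\quad (m\otimes_A n)\#g\mapsto (m\#1_G)\otimes_{A\#G}(n\#g)$$
is well defined. Since $(m\#1_G)(a\#1_G)=ma\#1_G$ and $(a\#1_G)(n\#g)=an\#g$, the element $a\#1_G$ can be moved across $\otimes_{A\#G}$, so the assignment is balanced over $A$ in the variables $m,n$; being moreover $\mathbb{K}$-trilinear and additive in the group variable, it descends to $(M\otimes_A N)\#G$.

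Next I would verify that $\Phi$ is a bimodule homomorphism. The relevant elementary identities are $(a\#g')(m\#1_G)=ag'(m)\#g'$, $(m\#1_G)(1_A\#g')=m\#g'$ and $(1_A\#g')(n\#g)=g'(n)\#g'g$, which together yield the normalization $(m'\#g')\otimes_{A\#G}(n\#g)=(m'\#1_G)\otimes_{A\#G}(g'(n)\#g'g)$. Applying $\Phi$ to the left action $(a\#g')\bigl((m\otimes_A n)\#g\bigr)=\bigl(ag'(m)\otimes_A g'(n)\bigr)\#g'g$ on the source and comparing with $(a\#g')$ acting on $\Phi\bigl((m\otimes_A n)\#g\bigr)$, both sides reduce to $(ag'(m)\#1_G)\otimes_{A\#G}(g'(n)\#g'g)$; the right-action check is symmetric.

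Finally I would produce the inverse. The map $(m\#g,n\#h)\mapsto (m\otimes_A g(n))\#gh$ is $\mathbb{K}$-bilinear, and a short computation using $g(c\gamma(n))=g(c)\,(g\gamma)(n)$ and the $A$-balancing in $M\otimes_A N$ shows it is balanced over $A\#G$; hence it induces $\Psi\colon (M\#G)\otimes_{A\#G}(N\#G)\to (M\otimes_A N)\#G$ with $\Psi\bigl((m\#g)\otimes_{A\#G}(n\#h)\bigr)=(m\otimes_A g(n))\#gh$. Then $\Psi\Phi=\mathrm{id}$ is immediate, and $\Phi\Psi=\mathrm{id}$ follows from the normalization identity above. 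I expect the verification that $\Psi$ is well defined, i.e.\ that the bilinear map descends through $\otimes_{A\#G}$, to be the only step requiring genuine care, since it is precisely there that the $G$-compatibility and the relations defining $M\otimes_A N$ interact; everything else is routine bookkeeping with the defining formulas.
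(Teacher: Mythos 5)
Your proposal is correct: the well-definedness check for $\Phi$, the bimodule-compatibility computation via the normalization identity $(m'\#g')\otimes_{A\#G}(n\#g)=(m'\#1_G)\otimes_{A\#G}(g'(n)\#g'g)$, and the explicit inverse $(m\#g)\otimes_{A\#G}(n\#h)\mapsto (m\otimes_A g(n))\#gh$ (whose balancedness over $A\#G$ uses exactly the $G$-compatibility $g(a\gamma(n))=g(a)\,(g\gamma)(n)$) all go through as you outline. The paper gives no proof of this lemma at all --- it is declared ``routine'' and the statement closes with a square --- so your direct verification is precisely the omitted argument, and it is sound.
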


Let $V$ be a $G$-compatible $A$-$A$-bimodule. Then $G$ acts naturally on the tensor algebra $T_A(V)$ by algebra automorphisms in the following manner: for $g\in G$ and $v_1\otimes_A v_2\otimes_A \cdots \otimes_A v_n\in V^{\otimes_A n}$, we define 
$$g(v_1\otimes_A v_2\otimes_A \cdots \otimes_A v_n)=g(v_1)\otimes_A g(v_2)\otimes_A \cdots \otimes_A g(v_n).$$
We form the skew group algebra $T_A(V)\#G$.

\begin{prop}\label{prop:tensor-skew}
Let $V$ be a $G$-compatible $A$-$A$-bimodule with $V\#G$ the induced bimodule over $A\#G$. Then we have an isomorphism $T_A(V)\#G\simeq T_{A\#G}(V\#G)$ of algebras.
\end{prop}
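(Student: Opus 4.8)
The plan is to construct the isomorphism degree by degree and verify it respects both the multiplication of the tensor algebra and the twisting introduced by the skew group construction. Both algebras are graded: $T_A(V)\#G = \bigoplus_{n\geq 0}(V^{\otimes_A n}\#G)$ as a $\mathbb{K}$-space, while $T_{A\#G}(V\#G)=\bigoplus_{n\geq 0}(V\#G)^{\otimes_{A\#G}n}$. The key structural input is Lemma~\ref{lem:ind-tensor}, which identifies $(V\otimes_A V)\#G$ with $(V\#G)\otimes_{A\#G}(V\#G)$, and more generally, by iterating, identifies $V^{\otimes_A n}\#G$ with $(V\#G)^{\otimes_{A\#G}n}$ for each $n$. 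So the degree-$n$ components already match as $(A\#G)$-$(A\#G)$-bimodules, and the candidate isomorphism $\Phi\colon T_A(V)\#G\to T_{A\#G}(V\#G)$ is simply the direct sum over $n$ of these bimodule isomorphisms.

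First I would fix, for each $n$, the isomorphism $\varphi_n\colon V^{\otimes_A n}\#G\to (V\#G)^{\otimes_{A\#G}n}$ obtained by iterating Lemma~\ref{lem:ind-tensor}; concretely, on a generator it should send $(v_1\otimes_A\cdots\otimes_A v_n)\#g$ to $(v_1\#1_G)\otimes_{A\#G}\cdots\otimes_{A\#G}(v_{n-1}\#1_G)\otimes_{A\#G}(v_n\#g)$, placing the group element $g$ in the rightmost tensor factor. For $n=0$ this is just the identity on $A\#G$, and for $n=1$ it is the identity on $V\#G$. Each $\varphi_n$ is a bimodule isomorphism, so $\Phi=\bigoplus_n\varphi_n$ is automatically an isomorphism of $\mathbb{K}$-spaces and of $(A\#G)$-bimodules.

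The substance of the proof is then checking that $\Phi$ is multiplicative, i.e. that it is an algebra homomorphism. It suffices to verify this on homogeneous elements, and in fact on generators: an element of degree $m$ of the form $(v_1\otimes_A\cdots\otimes_A v_m)\#g$ multiplied by one of degree $n$ of the form $(w_1\otimes_A\cdots\otimes_A w_n)\#h$. On the left-hand algebra, the skew group multiplication twists the second factor by $g$, producing $(v_1\otimes_A\cdots\otimes_A v_m\otimes_A g(w_1)\otimes_A\cdots\otimes_A g(w_n))\#gh$. I would then apply $\varphi_{m+n}$ and compare with the product $\varphi_m(\cdots)\cdot\varphi_n(\cdots)$ computed inside $T_{A\#G}(V\#G)$, where the concatenation of tensors over $A\#G$ must reproduce the same twist. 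The matching works precisely because $w_i\#h = (1_A\#g^{-1})(g(w_i)\#gh)$-type relations, together with the placement of the group element in the rightmost factor, cause the $g$-twisting on the $w_i$ to be exactly absorbed when passing the group element past the tensor factors. This is the main obstacle: one must track carefully how the single group element sitting in the rightmost slot interacts with the $A\#G$-balancing of the tensor product, ensuring that $(v_m\#g)\otimes_{A\#G}(w_1\#1_G)$ equals $(v_m\otimes_A g(w_1))\#g$ after using the bimodule relations. I would recommend proving the compatibility first in the case $m=n=1$ (so that only the definition of $V\#G$ and the $A\#G$-balancing are involved) and then deducing the general case by associativity and induction, since once the bilinear generating identity holds the multiplicativity on all of $T_A(V)\#G$ follows from the universal property of the tensor algebra. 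Finally, since $\Phi$ is a bijective algebra homomorphism that is the identity in degrees $0$ and $1$, it is the desired isomorphism.
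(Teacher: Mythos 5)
Your proposal follows essentially the same route as the paper: you build the degree-$n$ isomorphisms $\phi_n$ by iterating Lemma~\ref{lem:ind-tensor}, with the same explicit formula placing the group element in the rightmost tensor factor, and then check multiplicativity (which the paper states as ``direct to verify''). Your detailed tracking of how $(v_m\#g)\otimes_{A\#G}(w_1\#1_G)=(v_m\#1_G)\otimes_{A\#G}(g(w_1)\#g)$ absorbs the twist is exactly the computation the paper leaves implicit, so the argument is correct and complete.
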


\begin{proof}
By applying Lemma~\ref{lem:ind-tensor} repeatedly, we infer  an isomorphism 
$$\phi_n\colon V^{\otimes_A n}\#G\longrightarrow (V\#G)^{\otimes_{A\#G} n}$$
for each $n\geq 1$, which sends $(v_1\otimes_A v_2\otimes_A \cdots \otimes_A v_n)\#g$ to 
$$(v_1\#1_G)\otimes_{A\#G} (v_2\#1_G)\otimes_{A\#G} \cdots \otimes_{A\#G}(v_n\#g). $$
It is direct to verify that these isomorphisms $\phi_n$ give rise to the required isomorphism of algebras. 
\end{proof}

\subsection{Skew group algebras of preprojective algebras}

Let $M$ be a $G$-compatible $A$-$A$-bimodule. Consider the dual $A$-$A$-bimodule ${\rm Hom}_{A^e}(M, A^e)$. It is naturally $G$-compatible by the contragredient  $G$-action: for each $g\in G$ and $\theta\colon M\rightarrow A^e$, we define $g(\theta)\colon M\rightarrow A^e$ such that $g(\theta)(m)=g(\theta(g^{-1}(m)))$. Then we have the induced bimodule ${\rm Hom}_{A^e}(M, A^e)\#G$ over $A\#G$. 

The following result is essentially due to \cite[Lemma~3.3.1]{Le}, where the diagonal subalgebra $\Delta$ plays a role.

\begin{lem}\label{lem:bimod-dual-skew}
   Let $M$ be a $G$-compatible $A$-$A$-bimodule. Then we have an isomorphism of $(A\#G)$-$(A\#G)$-bimodules
   $${\rm Hom}_{A^e}(M, A^e)\#G\simeq {\rm Hom}_{(A\#G)^e}(M\#G, (A\#G)^e).$$
\end{lem}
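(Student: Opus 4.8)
The plan is to reduce everything to Lemma~\ref{lem:Hom-skew} applied to the enveloping algebra $A^e$ (with its diagonal $G$-action), and then to ``spread out'' the resulting identification over $(A\#G)^e$ using the induced-module descriptions of Lemma~\ref{lem:induced}. Throughout I regard a right $(A\#G)^e$-module as an $(A\#G)$-$(A\#G)$-bimodule, so every isomorphism below is understood as one of right $(A\#G)^e$-modules; I abbreviate $\Lambda=(A\#G)^e$.

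First I would rewrite the left-hand side. By Lemma~\ref{lem:induced}(1) there is an isomorphism $M\#G\simeq \Lambda\otimes_\Delta M$ of left $\Lambda$-modules, where $M$ is viewed as a left $\Delta$-module via the embedding (\ref{diag-sub}). The induction--restriction adjunction for this embedding then supplies a natural isomorphism
$${\rm Hom}_\Lambda(M\#G,\Lambda)\simeq {\rm Hom}_\Delta(M, {}_\Delta\Lambda).$$
Right multiplication by an element of $\Lambda$ is a left $\Lambda$-module endomorphism of the target $\Lambda$, so naturality of the adjunction in the target makes this an isomorphism of right $\Lambda$-modules.

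Next I would recognize ${\rm Hom}_\Delta(M,\Delta)$ as the crucial object. Applying Lemma~\ref{lem:Hom-skew}(2) with $A^e$ in place of $A$ (so that $A^e\#G=\Delta$) gives ${\rm Hom}_\Delta(M,\Delta)\simeq {\rm Hom}_{A^e}(M,A^e)$ as right $\Delta$-modules, the right $\Delta$-structure on the target being the one from Lemma~\ref{lem:Hom-skew}(1). The step demanding the most care, and the one I would single out as the main obstacle, is to verify that this right $\Delta$-structure coincides with the right $\Delta$-module structure on ${\rm Hom}_{A^e}(M,A^e)$ induced by the contragredient $G$-compatible bimodule structure $g(\theta)(m)=g(\theta(g^{-1}(m)))$ recalled just before the statement. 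This is a direct but fiddly computation comparing the formula of Lemma~\ref{lem:Hom-skew}(1) with the dictionary (\ref{iso:diag}) between right $\Delta$-modules and $G$-compatible bimodules; it matters because the induced bimodule ${\rm Hom}_{A^e}(M,A^e)\#G$ is assembled from precisely this contragredient structure.

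Finally I would spread the identification back over $\Lambda$ and compare. On one side, Lemma~\ref{lem:induced}(2) applied to $N={\rm Hom}_{A^e}(M,A^e)$ (a $G$-compatible bimodule via the contragredient action) yields ${\rm Hom}_{A^e}(M,A^e)\#G\simeq N\otimes_\Delta\Lambda$ as right $\Lambda$-modules. On the other side, there is the natural evaluation map
$${\rm Hom}_\Delta(M,\Delta)\otimes_\Delta\Lambda\longrightarrow {\rm Hom}_\Delta(M,\Lambda),\qquad \theta\otimes x\mapsto (m\mapsto \theta(m)x),$$
which is right $\Lambda$-equivariant. Since $\Lambda$ is free of finite rank as a left $\Delta$-module (as noted after (\ref{diag-sub})), after forgetting the right $\Lambda$-action both sides become finite direct sums of copies of ${\rm Hom}_\Delta(M,\Delta)$, and the evaluation map is bijective there; hence it is an isomorphism of right $\Lambda$-modules. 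Chaining
$${\rm Hom}_\Lambda(M\#G,\Lambda)\simeq {\rm Hom}_\Delta(M,\Lambda)\simeq {\rm Hom}_\Delta(M,\Delta)\otimes_\Delta\Lambda\simeq {\rm Hom}_{A^e}(M,A^e)\otimes_\Delta\Lambda\simeq {\rm Hom}_{A^e}(M,A^e)\#G$$
then delivers the desired isomorphism of $(A\#G)$-$(A\#G)$-bimodules.
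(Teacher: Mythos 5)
Your proposal is correct and follows essentially the same route as the paper's proof: identify $M\#G$ with $(A\#G)^e\otimes_\Delta M$ via Lemma~\ref{lem:induced}(1), apply Hom-tensor adjunction, use that $(A\#G)^e$ is a finitely generated free left $\Delta$-module, apply Lemma~\ref{lem:Hom-skew}(2) to the left $\Delta$-module $M$, and repackage via Lemma~\ref{lem:induced}(2). The only difference is cosmetic: you order the steps slightly differently and explicitly flag the compatibility of the right $\Delta$-structure from Lemma~\ref{lem:Hom-skew}(1) with the contragredient action, a verification the paper leaves implicit.
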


\begin{proof}
    Recall that $\Delta=A^e\#G$. By Lemma~\ref{lem:induced}(1), we identify the induced bimodule $M\#G$ with $(A\#G)^e\otimes_\Delta M$. By the Hom-tensor adjunction, we have the first isomorphism in the following identity.
    \begin{align*}
        {\rm Hom}_{(A\#G)^e}(M\#G, (A\#G)^e) &\simeq {\rm Hom}_{\Delta}(M, (A\#G)^e)\\
                                            & \simeq {\rm Hom}_{\Delta}(M, \Delta)\otimes_\Delta (A\#G)^e\\
                                            &\simeq {\rm Hom}_{A^e}(M, A^e)\otimes_\Delta (A\#G)^e\\
                                            &\simeq {\rm Hom}_{A^e}(M, A^e)\#G
    \end{align*}
    Here, the second isomorphism follows since $(A\#G)^e$ is a finitely generated free left $\Delta$-module, the third one follows by applying Lemma~\ref{lem:Hom-skew}(2) to the left $\Delta$-module $M$, and the last one follows from Lemma~\ref{lem:induced}(2).   
\end{proof}

 Let $M$ be a $G$-compatible $A$-$A$-bimodule. For each $n\geq 1$, we observe that the $A$-$A$-bimodule ${\rm Ext}^n_{A^e}(M, A^e)$ is naturally $G$-compatible. Indeed, we take a projecive resolution $P^\bullet$ of $M$ as a left $\Delta$-module. Therefore, each component $P^{-n}$ is a $G$-compatible $A$-$A$-bimodule, whose underlying $A$-$A$-bimodule is projective. Each component of the dual complex ${\rm Hom}_{A^e}(P^\bullet, A^e)$ is naturally $G$-compatible. Consequently, 
 $${\rm Ext}^n_{A^e}(M, A^e)=H^n({\rm Hom}_{A^e}(P^\bullet, A^e))$$
 is a naturally $G$-compatible. 

 \begin{lem}\label{lem:Ext-bi}
   Let $M$ be a $G$-compatible $A$-$A$-bimodule. Then for each $n\geq  1$, we have an isomorphism of $(A\# G)$-$(A\# G)$-bimodules   
   $${\rm Ext}^n_{A^e}(M, A^e)\#G \simeq {\rm Ext}^n_{(A\#G)^e}(M\#G, (A\#G)^e).$$
\end{lem}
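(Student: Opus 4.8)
The plan is to lift the $n=0$ statement of Lemma~\ref{lem:bimod-dual-skew} to higher degree by a single projective resolution that works simultaneously over $\Delta=A^e\#G$, over $A^e$, and (after induction) over $(A\#G)^e$, and then to transport the componentwise isomorphisms through cohomology. First I would fix a projective resolution $P^\bullet\to M$ of $M$ as a left $\Delta$-module, exactly as in the paragraph preceding the lemma. Since $\Delta=A^e\otimes\mathbb{K}G$ is free as an $A^e$-module, the forgetful functor $\Delta\mbox{-Mod}\to A^e\mbox{-Mod}$ is exact and sends projectives to projectives, so each $P^{-n}$ is a $G$-compatible $A$-$A$-bimodule whose underlying $A^e$-module is projective, and $P^\bullet$ is also an $A^e$-projective resolution of $M$. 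Hence ${\rm Ext}^n_{A^e}(M,A^e)=H^n({\rm Hom}_{A^e}(P^\bullet,A^e))$.

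Next I would induce up. Using the algebra embedding (\ref{diag-sub}) and the fact, noted after it, that $(A\#G)^e$ is free as a right $\Delta$-module, the induction functor $(A\#G)^e\otimes_\Delta-$ is exact and preserves projectivity. Applying it to $P^\bullet$ therefore yields a projective resolution of $(A\#G)^e\otimes_\Delta M$ over $(A\#G)^e$. By Lemma~\ref{lem:induced}(1) this resolution is identified, component by component, with $P^\bullet\#G$, and its augmentation target is $M\#G$. Consequently
$${\rm Ext}^n_{(A\#G)^e}(M\#G,(A\#G)^e)=H^n\big({\rm Hom}_{(A\#G)^e}(P^\bullet\#G,(A\#G)^e)\big).$$

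I would then apply Lemma~\ref{lem:bimod-dual-skew} to each $P^{-n}$, obtaining isomorphisms of $(A\#G)$-$(A\#G)$-bimodules
$${\rm Hom}_{(A\#G)^e}(P^{-n}\#G,(A\#G)^e)\simeq {\rm Hom}_{A^e}(P^{-n},A^e)\#G.$$
Assembling these over all $n$ gives an isomorphism of cochain complexes ${\rm Hom}_{(A\#G)^e}(P^\bullet\#G,(A\#G)^e)\simeq {\rm Hom}_{A^e}(P^\bullet,A^e)\#G$. Finally, because $-\#G=-\otimes\mathbb{K}G$ is exact (tensoring with the finite-dimensional space $\mathbb{K}G$), taking $H^n$ commutes with $\#G$, so
$$H^n\big({\rm Hom}_{A^e}(P^\bullet,A^e)\#G\big)\simeq H^n\big({\rm Hom}_{A^e}(P^\bullet,A^e)\big)\#G={\rm Ext}^n_{A^e}(M,A^e)\#G,$$
which is the desired identification; one also checks this matches the contragredient $G$-compatible structure used to define the $G$-action on ${\rm Ext}^n_{A^e}(M,A^e)$.

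The main obstacle I expect is naturality: to assemble the degreewise isomorphisms into an isomorphism of complexes, the isomorphism of Lemma~\ref{lem:bimod-dual-skew} must be natural with respect to the $\Delta$-linear differentials of $P^\bullet$. This reduces to checking that each step in the chain of isomorphisms proving that lemma (the Hom-tensor adjunction, the free-module rewriting of ${\rm Hom}_\Delta(M,(A\#G)^e)$, Lemma~\ref{lem:Hom-skew}(2), and Lemma~\ref{lem:induced}(2)) is natural in the first variable, so that the composite commutes with the induced coboundary maps; granting this, the remaining bookkeeping of bimodule and $G$-actions is routine.
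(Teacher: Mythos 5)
Your proof is correct and follows essentially the same route as the paper's: dualize a $\Delta$-projective resolution $P^\bullet$ of $M$, apply Lemma~\ref{lem:bimod-dual-skew} degreewise to get an isomorphism of complexes, observe that $P^\bullet\#G$ resolves $M\#G$, and use exactness of $-\#G$ to pass to cohomology. The only difference is that you spell out details the paper leaves implicit (why restriction and induction preserve projectivity via the freeness of $\Delta$ over $A^e$ and of $(A\#G)^e$ over $\Delta$, and the naturality needed to assemble the degreewise isomorphisms into a map of complexes), which is a welcome strengthening rather than a deviation.
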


\begin{proof}
By  ${\rm Ext}^n_{A^e}(M, A^e)=H^n({\rm Hom}_{A^e}(P^\bullet, A^e))$, we infer that 
$${\rm Ext}^n_{A^e}(M, A^e)\#G =H^n({\rm Hom}_{A^e}(P^\bullet, A^e)\#G).$$
Lemma~\ref{lem:bimod-dual-skew} implies that there is an isomorphism of complexes
$${\rm Hom}_{A^e}(P^\bullet, A^e)\#G \simeq {\rm Hom}_{(A\#G)^e}(P^\bullet\#G, (A\#G)^e).$$
We observe that $P^\bullet\#G$ is a projective resoluion of the induced bimodule $M\#G$. Then we infer an isomorphism
$$H^n({\rm Hom}_{A^e}(P^\bullet, A^e)\#G)\simeq {\rm Ext}^n_{(A\#G)^e}(M\#G, (A\#G)^e).$$
This completes the proof.
\end{proof}

Let $G$ be a finite group and $A$ an algebra with a $G$-action. For $n\geq 2$,  the $A$-$A$-bimodule ${\rm Ext}^{n-1}_{A^e}(A, A^e)$ is naturally $G$-compatible. Therefore, the $n$-preprojective algebra $\Pi_n(A)=T_A({\rm Ext}^{n-1}_{A^e}(A, A^e))$ has an induced $G$-action. We form the skew group algebra $\Pi_n(A)\#G$.  

The derived analogue of the following result is due to \cite[Theorem~3.5.4(1)]{Le}.

\begin{thm}\label{thm:pi}
Keep the assumptions above. Then we have an isomorphism $\Pi_n(A)\#G\simeq \Pi_n(A\#G)$ of algebras.    
\end{thm}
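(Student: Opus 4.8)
The plan is to combine the two structural isomorphisms already established in this section. The statement to be proved is $\Pi_n(A)\# G\simeq \Pi_n(A\# G)$, where by definition $\Pi_n(A)=T_A({\rm Ext}^{n-1}_{A^e}(A,A^e))$ and $\Pi_n(A\# G)=T_{A\# G}({\rm Ext}^{n-1}_{(A\# G)^e}(A\# G,(A\# G)^e))$. The left-hand side is a skew group algebra of a tensor algebra, and the right-hand side is a tensor algebra over the skew group algebra; this is exactly the shape that Proposition~\ref{prop:tensor-skew} is built to handle, provided the two bimodules match up appropriately.

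First I would set $V={\rm Ext}^{n-1}_{A^e}(A,A^e)$, viewed as the $G$-compatible $A$-$A$-bimodule whose $G$-compatibility was explained in the paragraph preceding Lemma~\ref{lem:Ext-bi} (taking $M=A$, which is evidently $G$-compatible via the given $G$-action on $A$). Then the tensor-algebra $G$-action on $T_A(V)=\Pi_n(A)$ is precisely the induced $G$-action described just before the theorem, so $\Pi_n(A)\# G$ makes sense. Applying Proposition~\ref{prop:tensor-skew} to this $V$ immediately yields an isomorphism of algebras
\begin{align*}
\Pi_n(A)\# G = T_A(V)\# G\;\simeq\; T_{A\# G}(V\# G),
\end{align*}
where $V\# G$ is the induced bimodule over $A\# G$.

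The remaining task is to identify the coefficient bimodule $V\# G$ with the bimodule defining $\Pi_n(A\# G)$. This is exactly what Lemma~\ref{lem:Ext-bi} supplies, with $M=A$: it gives an isomorphism of $(A\# G)$-$(A\# G)$-bimodules
\begin{align*}
V\# G = {\rm Ext}^{n-1}_{A^e}(A,A^e)\# G\;\simeq\;{\rm Ext}^{n-1}_{(A\# G)^e}(A\# G,(A\# G)^e).
\end{align*}
Here I would note that the induced bimodule $A\# G$ over $A\# G$ is canonically the regular bimodule $A\# G$ itself (via Remark~\ref{rem:induced} applied to $M=A$), so that the right-hand Ext is indeed the one occurring in the definition of $\Pi_n(A\# G)$. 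Feeding this bimodule isomorphism into the tensor construction—tensor algebras of isomorphic bimodules over a common base algebra are isomorphic as algebras—converts $T_{A\# G}(V\# G)$ into $T_{A\# G}({\rm Ext}^{n-1}_{(A\# G)^e}(A\# G,(A\# G)^e))=\Pi_n(A\# G)$, completing the chain.

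Since the two heavy lemmas are already in hand, the proof is essentially a two-line assembly, and no step is a serious obstacle. The only point requiring a little care is \emph{compatibility of the two identifications}: the $G$-compatible structure on $V$ used in Proposition~\ref{prop:tensor-skew} must be the same contragredient structure on ${\rm Ext}^{n-1}_{A^e}(A,A^e)$ that Lemma~\ref{lem:Ext-bi} uses, and one should check that the induced-bimodule isomorphism of Lemma~\ref{lem:Ext-bi} is compatible with the grading/multiplication so that it really induces an algebra (not merely bimodule) isomorphism of tensor algebras. I would verify that the functor $-\# G$ on $G$-compatible bimodules is monoidal in the sense of Lemma~\ref{lem:ind-tensor}, which is what makes Proposition~\ref{prop:tensor-skew} applicable and what guarantees the final step respects the algebra structure.
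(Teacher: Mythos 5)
Your proof is correct and takes essentially the same approach as the paper: both arguments rest on exactly the two ingredients you use, Lemma~\ref{lem:Ext-bi} (with $M=A$) and Proposition~\ref{prop:tensor-skew}, the paper merely applying them in the opposite order (first rewriting $\Pi_n(A\#G)$ via Lemma~\ref{lem:Ext-bi}, then invoking Proposition~\ref{prop:tensor-skew}). Your additional checks---that the induced bimodule $A\#G$ is the regular bimodule and that a bimodule isomorphism over a common base induces an algebra isomorphism of tensor algebras---are correct points that the paper leaves implicit.
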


\begin{proof}
By Lemma~\ref{lem:Ext-bi}, we have an isomorphism of $(A\#G)$-$(A\#G)$-bimodules.
$${\rm Ext}_{(A\#G)^e}^{n-1}(A\#G, (A\#G)^e) \simeq {\rm Ext}^{n-1}_{A^e}(A, A^e)\#G$$
It follows that $\Pi_{n}(A\#G)$ is isomorphic to the following tensor algebra
$$T_{A\#G}({\rm Ext}^{n-1}_{A^e}(A, A^e)\#G).$$
By Proposition~\ref{prop:tensor-skew}, the algebra above is isomorphic to
$$T_A({\rm Ext}^{n-1}_{A^e}(A, A^e))\#G=\Pi_n(A)\#G.$$
This completes the proof. 
\end{proof}

Let $G$ be a finite group which acts on a finite quiver $Q$ by quiver automorphisms. Then $G$ acts on the path algebra $\mathbb{K}Q$ and the preprojective algebra $\Pi(Q)$. Here, we observe that $g(\alpha^*)=g(\alpha)^*$ for $g\in G$ and $\alpha\in Q_1$. We form the corresponding skew group algebras $\mathbb{K}Q\#G$ and $\Pi(Q)\#G$. 

\begin{prop}
	Keep the assumptions above. Then we have an isomorphism $\Pi(Q)\#G\simeq \Pi_2(\mathbb{K}Q\#G)$ of algebras.
\end{prop}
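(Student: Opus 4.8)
The plan is to combine a $G$-equivariant refinement of Lemma~\ref{lem:quiver} with Theorem~\ref{thm:pi}, applied to $A=\mathbb{K}Q$ and $n=2$. Concretely, I expect the chain of isomorphisms
$$\Pi(Q)\#G\;\simeq\;\Pi_2(\mathbb{K}Q)\#G\;\simeq\;\Pi_2(\mathbb{K}Q\#G),$$
where the first isomorphism comes from Lemma~\ref{lem:quiver} once we know its isomorphism $\Pi_2(\mathbb{K}Q)\simeq\Pi(Q)$ respects the two $G$-actions, and the second is precisely Theorem~\ref{thm:pi}. Since a $G$-equivariant isomorphism $\phi$ of algebras induces an isomorphism $\phi\#G$ of the associated skew group algebras (apply $\phi$ in each graded component $A\#g$), the whole argument reduces to a single equivariance check.

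Thus the first and main step is to upgrade Lemma~\ref{lem:quiver} to a statement of $G$-algebras. Writing $A=\mathbb{K}Q$, it suffices to show that the identification of the $A$-$A$-bimodule ${\rm Ext}^1_{A^e}(A, A^e)$ with $\bigoplus_{\alpha\in Q_1} A e_{s(\alpha)}\otimes \mathbb{K}\alpha^*\otimes e_{t(\alpha)}A$, established in the proof of Lemma~\ref{lem:quiver}, is an isomorphism of $G$-compatible bimodules, where $G$ acts on the right-hand side by permuting the summands via $g(\alpha^*)=g(\alpha)^*$. Granting this, the induced isomorphism of tensor algebras carries the induced $G$-action on $\Pi_2(\mathbb{K}Q)$ to the natural $G$-action on $\mathbb{K}\overline{Q}/(\sum_\alpha\alpha\alpha^*-\alpha^*\alpha)=\Pi(Q)$, since $G$ permutes the $e_i$ and the arrows $\alpha,\alpha^*$ compatibly; hence the first isomorphism above is one of $G$-algebras.

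The equivariance check is where I expect the only real content to lie. The natural $G$-compatible structure on ${\rm Ext}^1_{A^e}(A, A^e)$ is functorial in the $G$-module structures of $A$ and $A^e$, so it is computed by any $G$-equivariant $A^e$-projective resolution of $A$, not only by a $\Delta$-projective one. The resolution (\ref{equ:bimod-res}) is exactly such a resolution: each $g\in G$ permutes the summands $Ae_i\otimes e_iA$ and $Ae_{t(\alpha)}\otimes\mathbb{K}\alpha\otimes e_{s(\alpha)}A$ according to its action on $Q_0$ and $Q_1$, and $\partial$ is $G$-equivariant because the formula $\partial(e_{t(\alpha)}\otimes\alpha\otimes e_{s(\alpha)})=\alpha\otimes e_{s(\alpha)}-e_{t(\alpha)}\otimes\alpha$ is natural in $\alpha$. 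Applying the $G$-equivariant functor ${\rm Hom}_{A^e}(-, A^e)$ together with the manifestly natural duality isomorphism (\ref{iso:bi-dual}) turns $\partial$ into the $G$-equivariant map $\partial'$ of (\ref{equ:quiver}), and thereby identifies ${\rm Ext}^1_{A^e}(A, A^e)={\rm coker}\,\partial'$ with the arrow bimodule as $G$-compatible bimodules, with $G$ acting by $g(\alpha^*)=g(\alpha)^*$.

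With this equivariance in hand, the first isomorphism of the displayed chain is an isomorphism of algebras, and composing it with Theorem~\ref{thm:pi} for $A=\mathbb{K}Q$ and $n=2$ yields $\Pi(Q)\#G\simeq\Pi_2(\mathbb{K}Q\#G)$, as desired. The remaining verifications — that the permutation actions commute with the differentials, and that the tensor-algebra isomorphism intertwines the two $G$-actions degreewise — are routine, and I would compress them to a sentence.
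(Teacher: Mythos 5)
Your proposal is correct and follows essentially the same route as the paper: both establish $G$-equivariance of the isomorphism of Lemma~\ref{lem:quiver} by observing that the bimodule resolution (\ref{equ:bimod-res}) is a complex of $G$-compatible bimodules, dualizing via (\ref{iso:bi-dual}) to identify the induced $G$-action on ${\rm Ext}^1_{(\mathbb{K}Q)^e}(\mathbb{K}Q,(\mathbb{K}Q)^e)$ with the one given by $g(\alpha^*)=g(\alpha)^*$, and then concluding with Theorem~\ref{thm:pi}. The only difference is that you make explicit why a $G$-equivariant $A^e$-projective resolution (not necessarily projective over the diagonal subalgebra $\Delta$) suffices to compute the $G$-compatible structure on ${\rm Ext}^1$ --- a point that genuinely needs justification in the modular setting and that the paper uses implicitly.
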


\begin{proof}
	The $G$-action on $\mathbb{K}Q$ induces a $G$-action on the $2$-preprojective algebra $\Pi_2(\mathbb{K}Q)$. By Lemma~\ref{lem:quiver}, we have an isomorphism $\Pi(Q)\simeq \Pi_2(\mathbb{K}Q)$. We claim that this isomorphism is compatible with the two $G$-actions.     
	
	For the claim, it suffices to prove that the following statement is true: the induced $G$-action on ${\rm Ext}^1_{(\mathbb{K}Q)^e}(\mathbb{K}Q, (\mathbb{K}Q)^e)$ coincides with the one given by $g(\alpha^*)=g(\alpha)^*$. Indeed, the projective resolution (\ref{equ:bimod-res}) is a complex of $G$-compatible $\mathbb{K}Q$-$\mathbb{K}Q$-bimodules.  Applying ${\rm Hom}_{(\mathbb{K}Q)^e}(-, (\mathbb{K}Q)^e)$ to (\ref{equ:bimod-res}), we obtain (\ref{equ:quiver}), whose induced $G$-action is given as follows:
	$g(ae_i\otimes e_ib)=g(a)e_{g(i)}\otimes e_{g(i)}g(b)$ and 
	$$g(ae_{s(\alpha)}\otimes \alpha^*\otimes e_{t(\alpha)}b)=g(a)e_{s(g(\alpha))}\otimes g(\alpha)^*\otimes e_{t(g(\alpha))}g(b)$$
	for each $i\in Q_0$ and $\alpha\in Q_1$. Since ${\rm Ext}^1_{(\mathbb{K}Q)^e}(\mathbb{K}Q, (\mathbb{K}Q)^e)$ is identified with the cokernel of $\partial'$, the statement above holds.  
	
	The claim above implies an isomorphism $\Pi(Q)\#G\simeq \Pi_2(\mathbb{K}Q)\#G$. Then the required isomorphism follows from Theorem~\ref{thm:pi} immediately. 
\end{proof}

\section{Morita equivalences and bimodules}

In  this section, we recall known facts on Morita equivalences between two algebras. We prove that any Morita equivalence between two algebras extends to a Morita equivalence between their preprojective algebras; see Proposition~\ref{prop:Pi}.

Let $A$ and $B$ be two algebras. We fix a $\mathbb{K}$-linear Morita equivalence 
$$F\colon A\mbox{-Mod}\longrightarrow B\mbox{-Mod}$$ between $A$ and $B$. There is an invertible $B$-$A$-bimodule $P$ such that $F\simeq P\otimes_A-$. Moreover, the bimodule $P$ fits into a set  $(P, Q; \phi,\psi)$ of \emph{equivalence data}, where $Q$ is an $A$-$B$-bimodule, $\phi\colon P\otimes_A Q\rightarrow B$ is an isomorphism of $B$-$B$-bimodules, and $\psi\colon Q\otimes_B P\rightarrow A$ is an isomorphism of $A$-$A$-bimodules. Moreover, these data satisfy the associativity condition, that is, 
\begin{align}\label{equ:asso}
\phi(x\otimes_A y)x'=x\psi(y\otimes_Bx') \mbox{ and } y'\phi(x\otimes_A y)=\psi(y'\otimes_B x)y,
\end{align}
for any $x, x'\in P$ and $y, y'\in Q$. We refer to \cite[Chapter II, $\S$ 3]{Bass} for details.

The equivalence data induce another Morita equivalence
$$F^e=P\otimes_A-\otimes_AQ\colon A^e\mbox{-Mod}\longrightarrow B^e\mbox{-Mod},$$
whose quasi-inverse might be chosen as $Q\otimes_B-\otimes_B P$.

\begin{rem}
Denote by $\mathcal{E}^c(A\mbox{-Mod})$ the category of continuous endofunctors on $A\mbox{-Mod}$, that is, endofunctors which preserve arbitrary coproducts. By Eilenberg-Watt's theorem,  we have an equivalence 
$$A^e\mbox{-Mod}\longrightarrow \mathcal{E}^c(A\mbox{-Mod}),$$ 
sending any $A$-$A$-bimodule $X$ to the endofunctor $X\otimes_A-$. Then we have the following square, which commutes up to a natural isomorphism.
\[\xymatrix{
A^e\mbox{-Mod}\ar[d] \ar[rr]^{F^e} && B^e\mbox{-Mod} \ar[d]\\
\mathcal{E}^c(A\mbox{-Mod})\ar[rr] && \mathcal{E}^c(B\mbox{-Mod})
}\]
Here, the functor at the bottom sends $H$ to $FHF^{-1}$, where $F^{-1}$ is a quasi-inverse of $F$. This justifies the notation $F^e$ to some extent. 
\end{rem}

For each left $B$-module $Y$, we denote by ${\rm add}\; Y$ the full subcategory of $B\mbox{-Mod}$ formed by direct summands of finite sums of $Y$.

\begin{lem}\label{lem:Fe}
Let $X$ be an $A$-$A$-bimodule. Then we have ${\rm add}\; F(X)={\rm add} \; F^e(X)$ in $B\mbox{-}{\rm Mod}$.
\end{lem}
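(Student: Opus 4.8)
The plan is to collapse the whole statement into a single additive functor and then exploit that ${}_AQ$ is a progenerator of $A\mbox{-Mod}$. Write $M=P\otimes_A X$, regarded as a $B$-$A$-bimodule whose left $B$-structure comes from $P$ and whose right $A$-structure comes from $X$. Unwinding the definitions, the underlying left $B$-module of $M$ is exactly $F(X)=P\otimes_A X$, while $F^e(X)=P\otimes_A X\otimes_A Q$ coincides, as a left $B$-module, with $M\otimes_A Q$. Thus the asserted equality ${\rm add}\, F(X)={\rm add}\, F^e(X)$ becomes ${\rm add}_B(M\otimes_A A)={\rm add}_B(M\otimes_A Q)$, and both sides are now values of the one additive functor $M\otimes_A-\colon A\mbox{-Mod}\to B\mbox{-Mod}$ on the objects ${}_AA$ and ${}_AQ$.

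The key input I would record is that ${}_AQ$ is a progenerator of $A\mbox{-Mod}$. Indeed, $Q\otimes_B-$ is a quasi-inverse Morita equivalence to $F$, so it sends the regular module ${}_BB$ to a progenerator $Q\otimes_B B\cong {}_AQ$. Consequently ${\rm add}({}_AQ)={\rm add}({}_AA)$: a progenerator $U$ always satisfies ${\rm add}(U)={\rm add}(A)$, being a direct summand of some $A^{\oplus n}$ by projectivity and having $A$ as a direct summand of some $U^{\oplus m}$ by the generator property. Concretely, both containments ${}_AQ\mid {}_AA^{\oplus n}$ and ${}_AA\mid {}_AQ^{\oplus m}$ can be read off from the equivalence data $\psi\colon Q\otimes_B P\simeq A$ together with the projectivity of ${}_BP$ and ${}_AQ$.

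Finally I would invoke the elementary fact that any additive functor respects equality of ${\rm add}$-closures: if ${\rm add}(U)={\rm add}(U')$, then $U\mid U'^{\oplus n}$ yields $H(U)\mid H(U')^{\oplus n}$ for $H$ additive, whence ${\rm add}(H(U))\subseteq {\rm add}(H(U'))$, and symmetrically. Applying this with $H=M\otimes_A-$, $U={}_AA$ and $U'={}_AQ$ gives ${\rm add}_B(M)={\rm add}_B(M\otimes_A Q)$, which is precisely the claim since $M\otimes_A A\cong M=F(X)$ and $M\otimes_A Q=F^e(X)$ as left $B$-modules.

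I do not anticipate a serious obstacle: the content is entirely formal once the translation $F^e(X)=M\otimes_A Q$ is established. The only points requiring care are the bookkeeping of left and right module structures — in particular checking that the left $B$-module underlying $M\otimes_A Q$ is built from the $P$-factor, so that it genuinely matches $F^e(X)$ as a left $B$-module and not merely as a bimodule — and recalling from Morita theory that ${}_AQ$ is a true progenerator, which is where the generator property (and not merely projectivity) of $Q$ is needed, namely for the inclusion ${\rm add}\, F(X)\subseteq {\rm add}\, F^e(X)$.
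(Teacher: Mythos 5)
Your proposal is correct and follows essentially the same route as the paper: the paper's own proof also writes $F(X)=P\otimes_A X$ and $F^e(X)=(P\otimes_A X)\otimes_A Q$, and deduces the equality of ${\rm add}$-closures from the single fact that the underlying left $A$-module $Q$ is a finitely generated projective generator. Your write-up just makes explicit the two routine facts the paper leaves implicit, namely that a progenerator $U$ satisfies ${\rm add}\,({}_AU)={\rm add}\,({}_AA)$ and that applying the additive functor $(P\otimes_A X)\otimes_A-$ preserves containments of ${\rm add}$-subcategories.
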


\begin{proof}
Recall that $F(X)=P\otimes_A X$ and $F^e(X)=(P\otimes_A X)\otimes_A Q$. Then the required equality follows from  the fact that the underlying left $A$-module $Q$ is a finitely generated projective generator. 
\end{proof}

Let $I$ be a two-sided ideal of $A$. The following subspace of $B$
\begin{align*}
\Phi_F(I) &=\{\mbox{finite sums } \sum_i\phi(x_i\otimes_Ay_i)\; |\; x_i\in PI, y_i\in Q\}\\
 &=\{\mbox{finite sums } \sum_i\phi(x_i\otimes_Ay_i)\; |\; x_i\in P, y_i\in IQ\}
\end{align*}
is a two-sided ideal of $B$.

The following result is implicit in \cite[Chapter II, Theorem~3.5(6)]{Bass}.

\begin{prop} \label{prop:MI-iso}
The assignment $I\mapsto \Phi_F(I)$ yields an isomorphism 
$$\Phi_F\colon \mathbf{I}(A)\longrightarrow \mathbf{I}(B)$$ 
between the ideal monoids. Moreover, we have an isomorphism $\Phi_F(I)\simeq F^e(I)$ of $B$-$B$-bimodules.
\end{prop}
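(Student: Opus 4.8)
The plan is to realize $\Phi_F(I)$ as the image of an injective bimodule map, and then to reduce both the multiplicativity and the bijectivity of $\Phi_F$ to two ``cancellation'' relations relating $\Phi_F(I)$ with $P$ and $Q$.

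First I would prove the ``Moreover'' statement together with $\Phi_F(A)=B$. Since $F^e=P\otimes_A-\otimes_AQ$ is an equivalence, it is exact; applying it to the inclusion $I\hookrightarrow A$ and composing with the canonical isomorphism $F^e(A)=P\otimes_AA\otimes_AQ\cong P\otimes_AQ\xrightarrow{\phi}B$ yields an injective $B$-$B$-bimodule map $F^e(I)\to B$ whose image, read off on the generators $\phi(x\otimes_A y)$ with $x\in PI$, is exactly $\Phi_F(I)$. This gives the isomorphism $\Phi_F(I)\simeq F^e(I)$ of $B$-$B$-bimodules; taking $I=A$ and using that $\phi$ is surjective gives $\Phi_F(A)=B$, so $\Phi_F$ preserves identities.

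The technical heart is to establish
\[
\Phi_F(I)\,P=P I\qquad\text{and}\qquad Q\,\Phi_F(I)=I Q .
\]
Each inclusion is a short computation using the associativity relations (\ref{equ:asso}). For example, $\Phi_F(I)P\subseteq PI$ follows from $\phi(x\otimes_A q)p=x\,\psi(q\otimes_B p)$ applied with $x\in PI$, while the reverse inclusion uses the decomposition $x=\sum_j\phi(x\otimes_A q'_j)p'_j$ arising from $\psi^{-1}(1_A)=\sum_j q'_j\otimes_B p'_j$; the relation for $Q$ is symmetric, using $y=\sum_j q'_j\phi(p'_j\otimes_A y)$. Multiplicativity then drops out formally:
\[
\Phi_F(IJ)P=P(IJ)=(PI)J=\bigl(\Phi_F(I)P\bigr)J=\cdots=\bigl(\Phi_F(I)\Phi_F(J)\bigr)P ,
\]
and cancelling $P$ on the right gives $\Phi_F(IJ)=\Phi_F(I)\Phi_F(J)$. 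The cancellation is legitimate because $P$ is flat as a left $B$-module, so $J\mapsto JP$ is injective on two-sided ideals of $B$ (one recovers $J$ from $JP$ via $J\simeq JP\otimes_AQ$, using $P\otimes_AQ\cong B$). For bijectivity I would run the quasi-inverse equivalence $G=Q\otimes_B-$ with the swapped equivalence data $(Q,P;\psi,\phi)$, producing $\Phi_G\colon\mathbf{I}(B)\to\mathbf{I}(A)$ with the analogous relation $\Phi_G(J)Q=QJ$; then $\Phi_G(\Phi_F(I))\,Q=Q\,\Phi_F(I)=IQ$, and cancelling $Q$ yields $\Phi_G\Phi_F=\mathrm{id}$, with $\Phi_F\Phi_G=\mathrm{id}$ following by symmetry.

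The main obstacle is the pair of cancellation relations above: the two nontrivial inclusions each require inserting the correct decomposition of $1_A$ (respectively $1_B$) coming from $\psi^{-1}$ and $\phi^{-1}$, and careful bookkeeping of which side of (\ref{equ:asso}) is being applied. Everything else, including the final progenerator cancellation, is then routine.
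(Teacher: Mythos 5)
Your proposal is correct, and it shares with the paper's proof the same opening move: realizing $\Phi_F(I)$ as the image of the composite $F^e(I)\hookrightarrow F^e(A)\simeq P\otimes_AQ\xrightarrow{\phi}B$, which settles the ``Moreover'' statement. Where you diverge is in how the monoid isomorphism itself is obtained. The paper argues abstractly: since $F^e$ is a Morita equivalence between $A^e\mbox{-Mod}$ and $B^e\mbox{-Mod}$ carrying $A$ to $B$, it induces a bijection between sub-bimodules of $A$ and sub-bimodules of $B$ (i.e.\ between two-sided ideals), and this bijection is $\Phi_F$; multiplicativity is then asserted to follow from the associativity condition (\ref{equ:asso}) without detail. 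You instead make everything element-level through the two relations $\Phi_F(I)P=PI$ and $Q\,\Phi_F(I)=IQ$, proved directly from (\ref{equ:asso}) together with the decompositions of $1_A$ and $1_B$ coming from $\psi^{-1}$ and $\phi^{-1}$; multiplicativity then follows by the formal chain $\Phi_F(IJ)P=(\Phi_F(I)\Phi_F(J))P$ plus cancellation of $P$, and bijectivity by constructing the explicit inverse $\Phi_G$ from the swapped equivalence data $(Q,P;\psi,\phi)$ and cancelling $Q$. Both routes are sound (your swapped data does satisfy the associativity axioms, and your cancellation is justified by recovering $J$ as $\phi\bigl(JP\otimes_AQ\bigr)=JB=J$, though note it is this recovery, not flatness of $P$ per se, that does the work). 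What the paper's argument buys is brevity and conceptual clarity, at the cost of leaving the multiplicativity computation to the reader; what yours buys is a self-contained proof in which the inverse map and the multiplicativity verification are completely explicit, filling in precisely the steps the paper compresses into one sentence.
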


\begin{proof}
We identify two-sided ideals of an algebra with its sub bimodules. Consider the following sequence of morphisms between $B$-$B$-bimodules.
$$F^e(I)\hookrightarrow F^e(A)=P\otimes_A A\otimes_AQ\simeq P\otimes_AQ \stackrel{\phi}\longrightarrow B$$
Here, the leftmost morphism is induced by the inclusion $I\hookrightarrow A$. The image of this composite morphism equals $\Phi_F(I)$. This proves the final statement. 

Since $F^e(A)\simeq B$, the Morita equivalence $F^e$ induces a bijection between two-sided ideals of $A$ and those of $B$. This bijection is essentially the same as   $\Phi_F$. Using the associativity condition (\ref{equ:asso}), one verify that $\Phi_F(II')=\Phi_F(I)\Phi_F(I')$ for any two-sided ideals $I$ and $I'$ of $A$.  
\end{proof}

The following result will be useful to determine the isomorphism $\Phi_F$.

\begin{prop}\label{prop:compute-Phi}
Assume that $I$ is a two-sided ideal of $A$ and that $J$ is a two-sided ideal of $B$. Then $\Phi_F(I)=J$ if and only if ${\rm add}\; F(A/I)={\rm add}\; (B/J)$ in $B\mbox{-}{\rm Mod}$.
\end{prop}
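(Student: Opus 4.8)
The plan is to prove the biconditional by relating the ideal $\Phi_F(I)$ to the module $F(A/I)$ through the behavior of the Morita equivalence on quotient modules. The key observation is that a two-sided ideal $J$ of $B$ is determined by the left $B$-module $B/J$ together with the requirement that it be an ideal; more precisely, for ideals we have the equivalence $\Phi_F(I)=J$ iff the corresponding quotient modules match up under $F$ in the sense of additive closures. First I would recall from Proposition~\ref{prop:MI-iso} that $\Phi_F(I)\simeq F^e(I)$ as $B$-$B$-bimodules, and that under the Morita equivalence $F^e$ on enveloping algebras the short exact sequence of $A$-$A$-bimodules $0\to I\to A\to A/I\to 0$ is sent to an exact sequence $0\to F^e(I)\to F^e(A)\to F^e(A/I)\to 0$, since $F^e$ is exact. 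Using $F^e(A)\simeq B$ and $F^e(I)\simeq \Phi_F(I)$, this identifies $F^e(A/I)$ with $B/\Phi_F(I)$ as $B$-$B$-bimodules.

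Next I would pass from the bimodule statement to the one-sided module statement that actually appears in the proposition. The link is Lemma~\ref{lem:Fe}, which says that for any $A$-$A$-bimodule $X$ we have ${\rm add}\, F(X)={\rm add}\, F^e(X)$ in $B\mbox{-Mod}$. Applying this with $X=A/I$ gives
\[
{\rm add}\; F(A/I)={\rm add}\; F^e(A/I)={\rm add}\; (B/\Phi_F(I)),
\]
where the last equality uses the bimodule isomorphism $F^e(A/I)\simeq B/\Phi_F(I)$ just established, read as an isomorphism of left $B$-modules. Thus ${\rm add}\, F(A/I)$ is always equal to ${\rm add}\,(B/\Phi_F(I))$ as subcategories of $B\mbox{-Mod}$, independently of any hypothesis.

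With this identity in hand the biconditional reduces to a purely formal statement. The forward direction is immediate: if $\Phi_F(I)=J$ then substituting gives ${\rm add}\, F(A/I)={\rm add}\,(B/J)$. For the converse, suppose ${\rm add}\, F(A/I)={\rm add}\,(B/J)$; combining with the established identity yields ${\rm add}\,(B/\Phi_F(I))={\rm add}\,(B/J)$ in $B\mbox{-Mod}$, and I must conclude $\Phi_F(I)=J$. This is where the main obstacle lies, since in general ${\rm add}\,(B/\Phi_F(I))={\rm add}\,(B/J)$ need not force equality of the ideals — the additive closure discards multiplicities and decomposes the module. The point to exploit is that $\Phi_F(I)$ and $J$ are two-sided ideals, so $B/\Phi_F(I)$ and $B/J$ are cyclic $B$-modules generated by the image of $1$; the annihilator of such a quotient (on the appropriate side) recovers the ideal. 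I would argue that ${\rm add}$ of these cyclic quotients determines, via annihilators of the summands, the two-sided ideals $\Phi_F(I)$ and $J$ uniquely, so that the two additive closures coinciding forces $\Phi_F(I)=J$.

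The step I expect to be the crux is precisely this last recovery of an ideal from the additive closure of its quotient module, and I would want to verify carefully that the two-sided (rather than merely one-sided) nature of the ideals is genuinely used, so that the annihilator argument is valid; once that is settled, the forward direction and the reduction via Lemma~\ref{lem:Fe} are routine.
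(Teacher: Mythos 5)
Your proposal is correct and follows essentially the same route as the paper: apply the exact functor $F^e$ to $0\to I\to A\to A/I\to 0$, use Proposition~\ref{prop:MI-iso} to identify $F^e(A/I)$ with $B/\Phi_F(I)$, pass to additive closures via Lemma~\ref{lem:Fe}, and then recover the ideals from ${\rm add}$ of their cyclic quotients. The annihilator step you flag as the crux is exactly the paper's subsequent (unlabelled) lemma, proved just as you sketch: for a two-sided ideal $J$ the left annihilator of $B/J$ equals $J$, every module in ${\rm add}\,(B/J)$ is annihilated by $J$, and the two resulting containments force $\Phi_F(I)=J$.
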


\begin{proof}
Set $\Phi_F(I)=J'$. By Proposition~\ref{prop:MI-iso}, we identify $F^e(A)$ with $B$, and $F^e(I)$ with  $J'$. Applying $F^e$ to the following canonical exact sequence 
$$0\longrightarrow I\longrightarrow A\longrightarrow A/I\longrightarrow 0,$$
we infer that $F^e(A/I)\simeq B/{J'}$. By Lemma~\ref{lem:Fe}, we have 
$${\rm add}\; F(A/I)={\rm add}\; (B/{J'})$$
in $B\mbox{-Mod}$. Then the lemma below implies the required statement.
\end{proof}

The following fact is well known.

\begin{lem}
Assume that both $J$ and $J'$ are two-sided ideals of $B$. Then $J=J'$ if and only if ${\rm add}\; (B/J)={\rm add}\; (B/J')$ in $B\mbox{-}{\rm Mod}$.
\end{lem}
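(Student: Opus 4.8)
The plan is to recover the two-sided ideal $J$ directly from the left module $B/J$ by passing to its annihilator, and then to observe that annihilators behave monotonically under the formation of additive closures. The forward implication is immediate: if $J=J'$ then $B/J=B/J'$ as left $B$-modules, and hence ${\rm add}\; (B/J)={\rm add}\; (B/J')$. All the content lies in the converse.

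For the converse, the first step is to compute the left annihilator ${\rm ann}_B(B/J)=\{b\in B\mid b(B/J)=0\}$ and to show that it equals $J$. Indeed, $b$ annihilates $B/J$ precisely when $bx\in J$ for every $x\in B$; taking $x=1$ forces $b\in J$, while conversely any $b\in J$ satisfies $bx\in J$ since $J$ is a right ideal. This identity, which uses that $B$ is unital and that $J$ is two-sided, is the only genuine ingredient of the proof: it translates the module-theoretic hypothesis back into a statement about ideals.

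The second step is the elementary observation that the annihilator is monotone along ${\rm add}$: if a $B$-module $N$ lies in ${\rm add}\; M$, then $N$ is a direct summand of some finite power $M^{\oplus n}$, so that ${\rm ann}_B(M)={\rm ann}_B(M^{\oplus n})\subseteq {\rm ann}_B(N)$. Applying this in both directions to the hypothesis ${\rm add}\; (B/J)={\rm add}\; (B/J')$ yields $J'\subseteq J$ from $B/J\in {\rm add}\; (B/J')$ and $J\subseteq J'$ from $B/J'\in {\rm add}\; (B/J)$, whence $J=J'$.

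There is no substantial obstacle here; the argument is entirely formal once the key identity ${\rm ann}_B(B/J)=J$ is in hand. The only point worth flagging is that this identity crucially requires $J$ to be a two-sided ideal, so that it is stable under right multiplication, and $B$ to possess an identity element; both hypotheses are in force.
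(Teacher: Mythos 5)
Your proof is correct and follows essentially the same route as the paper: recover $J$ as the left annihilator of $B/J$, and use that every module in ${\rm add}\,(B/J)$ has annihilator containing $J$ to get both inclusions. Your write-up just spells out the two ingredients (the identity ${\rm ann}_B(B/J)=J$ and monotonicity of annihilators along ${\rm add}$) that the paper's proof states more tersely.
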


\begin{proof}
It suffices to prove the ``if" part. The annihilator ideal of  the left $B$-module $B/J$ equals $J$. Moreover,  for any $E\in {\rm add}\; (B/J)$, $J$ is contained in the annihilator ideal of $E$. Then the required equality follows immediately.
\end{proof}

The following result is implicit in \cite[Proposition~3.10(e)]{Ke11}. 

\begin{lem}\label{lem:Morita-bimod}
Let $F\colon A\mbox{-}{\rm Mod}\rightarrow B\mbox{-}{\rm Mod}$ be the given Morita equivalence. Then for each $i\geq 0$, the following diagram
\[\xymatrix{
A\mbox{-}{\rm Mod} \ar[d]_-{{\rm Ext}_{A^e}^i(A, A^e)\otimes_A-}\ar[rr]^F && B\mbox{-}{\rm Mod} \ar[d]^-{{\rm Ext}_{B^e}^i(B, B^e)\otimes_B-}\\
A\mbox{-}{\rm Mod}\ar[rr]^F && B\mbox{-}{\rm Mod}
}\]
commutes up to a natural isomorphism.
\end{lem}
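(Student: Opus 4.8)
The plan is to realize the commutativity of the square as a single isomorphism of bimodules, and then to obtain that isomorphism by transporting the bimodule ${\rm Ext}^i_{A^e}(A,A^e)$ through the induced Morita equivalence $F^e$ on enveloping algebras.

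Write $F=P\otimes_A-$ and abbreviate $E_A^i={\rm Ext}^i_{A^e}(A,A^e)$ and $E_B^i={\rm Ext}^i_{B^e}(B,B^e)$. The two composites in the diagram are $P\otimes_A E_A^i\otimes_A-$ (down then right) and $E_B^i\otimes_B P\otimes_A-$ (right then down). Therefore it suffices to construct an isomorphism of $B$-$A$-bimodules $P\otimes_A E_A^i\cong E_B^i\otimes_B P$; tensoring this isomorphism with $-\otimes_A X$ then produces a natural isomorphism of the two functors. Using the equivalence data $(P,Q,\phi,\psi)$ together with $\phi\colon P\otimes_A Q\cong B$ and $\psi\colon Q\otimes_B P\cong A$, this bimodule isomorphism is in turn equivalent (tensor on the right by $-\otimes_A Q$, respectively by $-\otimes_B P$) to an isomorphism of $B$-$B$-bimodules
\[ F^e(E_A^i)=P\otimes_A E_A^i\otimes_A Q\;\cong\; E_B^i, \]
where I use $F^e(A)\cong B$ as in the proof of Proposition~\ref{prop:MI-iso}. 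So the whole statement reduces to this last isomorphism.

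To establish it, I would fix a projective resolution $P^\bullet\to A$ of $A$ by $A^e$-modules, so that $E_A^i=H^i({\rm Hom}_{A^e}(P^\bullet,A^e))$. Since $F^e$ is an exact equivalence, it commutes with cohomology and sends projectives to projectives; as $F^e(A)\cong B$, the complex $F^e(P^\bullet)\to B$ is a projective resolution of $B$ over $B^e$. Thus $F^e(E_A^i)=H^i\big(F^e({\rm Hom}_{A^e}(P^\bullet,A^e))\big)$, and it remains to produce a natural isomorphism of complexes of $B$-$B$-bimodules
\[ F^e\big({\rm Hom}_{A^e}(P^\bullet,A^e)\big)\;\cong\;{\rm Hom}_{B^e}\big(F^e(P^\bullet),B^e\big), \]
after which passing to $H^i$ gives the required isomorphism. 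By additivity it is enough to treat a single free bimodule $A^e$: here one computes $F^e(A^e)\cong P\otimes Q$ with its standard bimodule structure, via the swap isomorphism (\ref{iso:bi-dual}), and identifies its $B^e$-dual ${\rm Hom}_{B^e}(P\otimes Q,B^e)$ with $P\otimes Q$ again, using the self-duality of the progenerator encoded in the equivalence data (namely $Q\cong {\rm Hom}_B(P,B)$ and $P\cong {\rm Hom}_{B^{\rm op}}(Q,B)$) together with the formula for the dual of an external tensor product.

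The main obstacle will be bookkeeping the $B$-$B$-bimodule structures through the composition of the contravariant functor ${\rm Hom}_{A^e}(-,A^e)$ with the covariant $F^e$: the swap in (\ref{iso:bi-dual}) and the interchange of the two tensor factors must be tracked carefully, and the duality isomorphism must be shown to be natural in the projective module, so that it is compatible with the differentials and descends to cohomology. A secondary technical point is that an arbitrary projective bimodule resolution of $A$ need not consist of finitely generated modules, for which the external-tensor duality is delicate; this is most cleanly circumvented by phrasing the argument at the level of the inverse dualizing complex $R{\rm Hom}_{A^e}(A,A^e)$ and invoking its Morita invariance as in \cite{Ke11}, then taking the $i$-th cohomology.
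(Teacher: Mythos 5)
Your opening reductions coincide with the paper's: the paper also reduces the commutativity of the square to an isomorphism of $B$-$A$-bimodules $P\otimes_A {\rm Ext}^i_{A^e}(A,A^e)\simeq {\rm Ext}^i_{B^e}(B,B^e)\otimes_B P$. After that the two arguments genuinely diverge. The paper never dualizes a resolution: it applies the exact equivalence $F^e$ to the pair $(A,A^e)$ to obtain an isomorphism of $A$-$A$-bimodules ${\rm Ext}^i_{A^e}(A,A^e)\simeq {\rm Ext}^i_{B^e}(B,P\otimes Q)$, then invokes the single standard fact that ${\rm Ext}^i_{B^e}(B,-)$ commutes with coefficient change along a finitely generated projective $B^e$-module, applied to $P\otimes Q=F^e(A^e)$:
$${\rm Ext}^i_{B^e}(B,P\otimes Q)\simeq {\rm Ext}^i_{B^e}(B,B^e)\otimes_{B^e}(P\otimes Q)=Q\otimes_B{\rm Ext}^i_{B^e}(B,B^e)\otimes_B P,$$
and finishes by tensoring with $P$ and collapsing $\phi\colon P\otimes_A Q\simeq B$. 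This coefficient-change computation does exactly the work your complex-level isomorphism $F^e\bigl({\rm Hom}_{A^e}(P^\bullet,A^e)\bigr)\cong{\rm Hom}_{B^e}\bigl(F^e(P^\bullet),B^e\bigr)$ is designed to do, but with no bookkeeping of differentials, no swap-tracking, and no hypotheses on the terms of the resolution. Your approach is instead the same style of argument the paper uses in Section~3 (Lemmas~\ref{lem:bimod-dual-skew} and \ref{lem:Ext-bi}) for the skew group situation, transplanted to the Morita setting.

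Two points are needed to close your version. First, the finiteness worry is repairable without the derived detour: both assignments $M\mapsto F^e({\rm Hom}_{A^e}(M,A^e))$ and $M\mapsto {\rm Hom}_{B^e}(F^e(M),B^e)$ send arbitrary direct sums to direct products (an equivalence such as $F^e$ preserves all products), so a natural transformation between them that is an isomorphism at $M=A^e$ is automatically an isomorphism on all free, hence all projective, bimodules — ``additivity'' alone, as you state it, only covers finitely generated projectives. Second, and this is the genuine missing piece, that natural transformation has to be exhibited: objectwise isomorphisms ${\rm Hom}_{B^e}(P\otimes Q,B^e)\cong P\otimes Q$ do not by themselves descend to cohomology, since compatibility with the differentials is exactly naturality. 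One clean construction is the composite of the full-faithfulness isomorphism ${\rm Hom}_{A^e}(M,A^e)\simeq{\rm Hom}_{B^e}(F^e(M),P\otimes Q)$ with a coefficient-change map for the finitely generated projective coefficient $P\otimes Q$ — at which point you have essentially rederived the paper's argument. Your fallback through $R{\rm Hom}_{A^e}(A,A^e)$ and its Morita invariance in \cite{Ke11} is legitimate (the paper itself credits the lemma as implicit in \cite[Proposition~3.10(e)]{Ke11}), but it trades an elementary three-line computation for a derived-category citation.
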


\begin{proof}
Since $F\simeq P\otimes_A-$, it suffices to show an isomorphism of $B$-$A$-bimodules
$$P\otimes_A {\rm Ext}_{A^e}^i(A, A^e)\simeq {\rm Ext}_{B^e}^i(B, B^e)\otimes_BP.$$
For this end, we observe that the Morita equivalence $F^e$ sends $A$ to $B$, and $A^e$ to $P\otimes Q$. Therefore, we have the following isomorphism of $A$-$A$-bimodules.
\begin{align}\label{iso:AB-bimod}
{\rm Ext}_{A^e}^i(A, A^e)\simeq {\rm Ext}_{B^e}^i(B, P\otimes Q)
\end{align}
Since $P\otimes Q$ is a finitely generated projective $B^e$-module, we have
\begin{align}\label{iso:PQ}
{\rm Ext}_{B^e}^i(B, P\otimes Q)\simeq {\rm Ext}_{B^e}^i(B, B^e)\otimes_{B^e}(P\otimes Q)=Q\otimes_B{\rm Ext}_{B^e}^i(B, B^e)\otimes_BP.
\end{align}
Consequently, we have the following isomorphisms.
\begin{align*}
P\otimes_A {\rm Ext}_{A^e}^i(A, A^e) &\simeq P\otimes_A {\rm Ext}_{B^e}^i(B, P\otimes Q)\\
                     & \simeq P\otimes_A (Q\otimes_B{\rm Ext}_{B^e}^i(B, B^e)\otimes_BP)\\
                     & \simeq {\rm Ext}_{B^e}^i(B, B^e)\otimes_BP
\end{align*}
Here, the first isomorphism uses (\ref{iso:AB-bimod}), the second one uses (\ref{iso:PQ}) and the last one uses the isomorphism $\phi\colon P\otimes_A Q\rightarrow  B$. This completes the proof.
\end{proof}

The following result shows that any Morita equivalence between $A$ and $B$ extends to a Morita equivalence between their $n$-preprojective algebras $\Pi_n(A)$ and $\Pi_n(B)$ for $n\geq 2$. We emphasize that the result is essentially due to \cite[Proposition~4.2]{Ke11}.

\begin{prop}\label{prop:Pi}
  Let $F\colon A\mbox{-{\rm Mod}}\rightarrow B\mbox{-{\rm Mod}}$ be a Morita equivalence and $n\geq 2$. Then there is a Morita equivalence $\widetilde{F}$ making the following diagram commute up to a natural isomorphism.
  \[\xymatrix{
  A\mbox{-{\rm Mod}}\ar[d]\ar[rr]^F && B\mbox{-{\rm Mod}} \ar[d]\\
  \Pi_n(A)\mbox{-{\rm Mod}}\ar[rr]^{\widetilde{F}} && \Pi_n(B)\mbox{-{\rm Mod}}
  } \]
\end{prop}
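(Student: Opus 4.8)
The plan is to realize both module categories as representation categories of endofunctors and then invoke Lemma~\ref{lem:rep}. Write $V_A={\rm Ext}^{n-1}_{A^e}(A, A^e)$ and $V_B={\rm Ext}^{n-1}_{B^e}(B, B^e)$, so that $\Pi_n(A)=T_A(V_A)$ and $\Pi_n(B)=T_B(V_B)$ by definition. Via the isomorphism of categories (\ref{iso:TAV}), I would identify $\Pi_n(A)\mbox{-Mod}$ with ${\rm rep}(E_A)$, where $E_A=V_A\otimes_A-\colon A\mbox{-Mod}\to A\mbox{-Mod}$, and likewise $\Pi_n(B)\mbox{-Mod}$ with ${\rm rep}(E_B)$ for $E_B=V_B\otimes_B-$.

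First I would establish the intertwining $FE_A\simeq E_BF$. This is precisely Lemma~\ref{lem:Morita-bimod} applied with $i=n-1$: that lemma provides a natural isomorphism relating $F\circ(V_A\otimes_A-)$ and $(V_B\otimes_B-)\circ F$, that is, an isomorphism $\eta\colon E_BF\to FE_A$. With $\eta$ in hand, Lemma~\ref{lem:rep} (taking $\mathcal{C}=A\mbox{-Mod}$, $\mathcal{C}'=B\mbox{-Mod}$, $E=E_A$, $E'=E_B$ and the equivalence $F$) immediately produces an induced equivalence $\widetilde{F}\colon {\rm rep}(E_A)\to {\rm rep}(E_B)$, which under the identifications above is an equivalence $\widetilde{F}\colon \Pi_n(A)\mbox{-Mod}\to \Pi_n(B)\mbox{-Mod}$.

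Next I would check that $\widetilde{F}$ is a Morita equivalence. Since $F$ is $\mathbb{K}$-linear, the construction in Lemma~\ref{lem:rep} yields a $\mathbb{K}$-linear equivalence, and any $\mathbb{K}$-linear equivalence between the module categories of two algebras is a Morita equivalence, being induced by an invertible bimodule (Eilenberg--Watts, as recalled in the Remark preceding Lemma~\ref{lem:Fe}). Finally, for commutativity of the square I would use the explicit description of $\widetilde{F}$ from Lemma~\ref{lem:rep}: it sends a representation $(X,\alpha)$ to $(F(X), F(\alpha)\circ\eta_X)$, so on underlying $A$-modules $\widetilde{F}$ acts as $F$. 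Under (\ref{iso:TAV}) the vertical functors of the diagram correspond to the evident functors relating a representation $(X,\alpha)$ with its underlying module $X$; since $\widetilde{F}$ covers $F$, the square commutes up to the natural isomorphism $\eta$.

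I do not expect a genuine obstacle here: the essential input, namely the compatibility of the bimodules ${\rm Ext}^{n-1}_{A^e}(A,A^e)$ and ${\rm Ext}^{n-1}_{B^e}(B,B^e)$ with the Morita equivalence, has already been isolated as Lemma~\ref{lem:Morita-bimod}, and the remaining steps are the formal passage through ${\rm rep}(-)$ via Lemma~\ref{lem:rep} together with the standard identification (\ref{iso:TAV}). The only point requiring a little care is bookkeeping the direction and naturality of $\eta\colon E_BF\to FE_A$, so that $\widetilde{F}$ is genuinely functorial and its quasi-inverse is correctly the lift of a quasi-inverse of $F$.
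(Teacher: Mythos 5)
Your proposal is correct and follows essentially the same route as the paper: the paper's proof likewise combines Lemma~\ref{lem:Morita-bimod} (applied with $i=n-1$) with Lemma~\ref{lem:rep} to obtain $\widetilde{F}$ on representation categories, and then uses the identification (\ref{iso:TAV}) of $\Pi_n(A)\mbox{-Mod}$ with ${\rm rep}({\rm Ext}^{n-1}_{A^e}(A,A^e)\otimes_A-)$ and similarly for $B$. The extra verifications you spell out (that $\widetilde{F}$ is Morita, and that the square commutes because $\widetilde{F}$ sends $(X,0)$ to $(F(X),0)$) are left implicit in the paper but are correct.
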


Here, we identify any $A$-module $X$ with the corresponding $\Pi_n(A)$-module $X$ on which ${\rm Ext}_{A^e}^{n-1}(A, A^e)$ acts trivially. This yields the unnamed vertical arrow on the left side. Similar remarks work for the right side. 

\begin{proof}
    By combining Lemmas~\ref{lem:Morita-bimod} and \ref{lem:rep}, the given Morita equivalence $F$ induces an equivalence
    $$\widetilde{F} \colon {\rm rep}({\rm Ext}_{A^e}^{n-1}(A, A^e)\otimes_A-) \longrightarrow {\rm rep}({\rm Ext}_{B^e}^{n-1}(B, B^e)\otimes_B-).$$ 
    By  the isomorphism (\ref{iso:TAV}), we  identify $\Pi_n(A)\mbox{-Mod}$ with ${\rm rep}({\rm Ext}_{A^e}^{n-1}(A, A^e)\otimes_A-)$,  and $\Pi_n(B)\mbox{-Mod}$ with ${\rm rep}({\rm Ext}_{B^e}^{n-1}(B, B^e)\otimes_B-)$. 
    This completes the proof. 
\end{proof}

\section{Weyl groups and monoids}

In this section, we study Weyl groups and monoids associated to quivers and Cartan matrices. In the folding process, we prove that the Weyl monoid of a Cartan matrix is isomorphic to the invariant monoid of a quiver; see Proposition~\ref{prop:monoid}.  We refer to \cite[Section~3]{Kac} for Weyl groups and \cite{Ts} for Weyl monoids. 

Let $Q$ be a finite acyclic quiver. Denote by $W(Q)$ its \emph{Weyl group}. It is generated by simple reflections $\{s_i\; |\; i\in Q_0\}$, which are subject to the following relations: $s_i^2=1$; $(s_is_j)^2=1$ if there is no arrow between $i$ and $j$; $(s_is_j)^3=1$ if there is precisely one arrow between $i$ and $j$. 

Denote by $WM(Q)$ the \emph{Weyl monoid}, which is a monoid generated by $\{h_i\; |\; i\in Q_0\}$ subject to the following relations $h_i^2=h_i$;  $h_ih_j=h_jh_i$ if there is no arrow between $i$ and $j$; $h_ih_jh_i=h_jh_ih_j$ if there is precisely one arrow between $i$ and $j$. 

By \cite[Theorem~1]{Ts}, there is a bijection 
$$\rho_Q\colon W(Q)\longrightarrow WM(Q)$$
given as follows:  for any reduced expression $\omega=s_{i_1}s_{i_2}\cdots s_{i_n}$ in $W(Q)$ with $i_1, i_2, \cdots, i_n\in Q_0$, we have $\rho_Q(\omega)=h_{i_1}h_{i_2}\cdots h_{i_n}$.

The above consideration works well for Cartan matrices. Let $C=(c_{ij})\in M_n(\mathbb{Z})$ be a \emph{symmetrizable generalized Cartan matrix}. Therefore, the following conditions are fulfilled. 
\begin{enumerate}
    \item[(C1)] $c_{ii}=2$ for each $i$;
    \item[(C2)] $c_{ij}\leq 0$ for all $i\neq j$, and $c_{ij}<0$ if and only if $c_{ji}<0$;
    \item[(C3)] There exists a diagonal matrix $D={\rm diag}(c_1, c_2, \cdots, c_n)$ with each $c_i$ a positive integer such that $DC$ is symmetric. 
\end{enumerate}
Such a matrix $D$ is called a \emph{symmetrizer} of $C$.

Denote by $W(C)$ the associated Weyl group, which is generated by simple reflections $\{r_1, r_2, \cdots, r_n\}$ subject to the following relations: $r_i^2=1$; $(r_ir_j)^2=1$ if $c_{ij}=0$; $(r_ir_j)^3=1$ if $c_{ij}c_{ji}=1$; $(r_ir_j)^4=1$ if $c_{ij}c_{ji}=2$;  $(r_ir_j)^6=1$ if $c_{ij}c_{ji}=3$. 

Similarly, the Weyl monoid $WM(C)$ is a monoid generated by $\{f_1, f_2, \cdots, f_n\}$ subject to the relations: $f_i^2=f_i$; $f_if_j=f_jf_i$ if $c_{ij}=0$; $f_if_jf_i=f_jf_if_j$ if $c_{ij}c_{ji}=1$; $(f_if_j)^2=(f_jf_i)^2$ if $c_{ij}c_{ji}=2$;  $(f_if_j)^3=(f_jf_i)^3$ if $c_{ij}c_{ji}=3$. 

By \cite[Theorem~1]{Ts},  there is a bijection 
$$\rho_C\colon W(C)\longrightarrow WM(C)$$
given as follows:  any reduced expression $\omega=r_{i_1}r_{i_2}\cdots r_{i_m}$ in $W(C)$, we have $\rho_C(\omega)=f_{i_1}f_{i_2}\cdots f_{i_m}$.

\begin{rem}
    For any Coxeter group $W$ with the Coxeter matrix $M$, one defines the corresponding \emph{Coxeter monoid} $WM$ in \cite{Ts}. There is a similar bijection $\rho\colon W\rightarrow WM$; see \cite[Theorem~1]{Ts}. When $M$ arises from a Cartan matrix $C$, the bijection $\rho$ coincides with $\rho_C$. We mention that the monoid algebra of $WM$ is isomorphic to the $0$-Hecke algebra \cite{Nor}; compare \cite[Theorem~4.4]{CLu}. 
\end{rem}

Each finite acyclic quiver $Q$ gives rise to a symmetric Cartan matrix $C$ in the following way: the rows and columns of $C$ are indexed by $Q_0$; for $i\neq j$, we have 
$$c_{ij}=-|\{\mbox{arrows between } i \mbox{ and }j\}|.$$ 
The two Weyl groups $W(Q)$ and $W(C)$ coincide.  Similarly,   the two Weyl monoids $WM(Q)$ and $WM(C)$ coincide. Moreover, $\rho_Q=\rho_C$. 

The following example, known as the folding process,  is our main concern. 

\begin{exm}\label{exm:GCM}
{\rm Let $G$ be a finite group acting on a finite acyclic quiver $Q$. Denote by $Q_0/G$ the set of $G$-orbits, whose elements will be denoted by bold letters. Since $Q$ is acyclic, there is no arrow between any two vertices in the same orbit.

We associate a Cartan matrix $C$ to this action as follows. The rows and columns of $C$ are indexed by $Q_0/G$. The entries are given by
$$c_{\mathbf{i}, \mathbf{j}}=\frac{-N_{\mathbf{i}, \mathbf{j}}}{|\mathbf{j}|}$$
where $N_{\mathbf{i}, \mathbf{j}}$ counts all arrows in $Q$ between the orbits $\mathbf{i}$ and $\mathbf{j}$. The symmetrizer $D={\rm diag}(c_{\mathbf{i}})_{\mathbf{i}\in Q_0/G}$ of $C$ is given such that 
$$c_{\mathbf{i}} =\frac{|G|}{|\mathbf{i}|}.$$ 
The group $G$ acts on $W(Q)$ by group automorphisms. By \cite[\S 11]{St} and \cite[Proposition~3.4]{Hee}, there is a well-known isomorphism of groups
$$\psi\colon W(C)\longrightarrow W(Q)^G,$$
which sends $r_\mathbf{i}$ to $\prod_{i\in \mathbf{i}}s_i$. Here, we denote by $W(Q)^G$ the  fixed subgroup. We refer to  \cite[Lemma~3(3)]{Hu04} and \cite[Theorem~1]{GI} for more recent treatments.}
\end{exm}

The following useful fact can be found in \cite[Lemma~2]{GI}; compare \cite[Proposition~3.4]{Hee}.

\begin{lem}\label{lem:reduced}
Keep the notation above. Let $\omega=r_{\mathbf{i}_1}r_{\mathbf{i}_2}\cdots r_{\mathbf{i}_m}$ be a reduced expression in $W(C)$. Then the expression $\psi(\omega)=(\prod_{i_1\in \mathbf{i}_1}s_{i_1})(\prod_{i_2\in \mathbf{i}_2}s_{i_2})\cdots (\prod_{i_m\in \mathbf{i}_m}s_{i_m})$ is also reduced in $W(Q)$. \hfill $\square$
\end{lem}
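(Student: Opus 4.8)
The plan is to argue by induction on $m$, using the standard root–theoretic length criterion for Coxeter groups. Let $\{\alpha_i\}_{i\in Q_0}$ denote the simple roots of $W(Q)$ and $\ell$ its length function; recall that for $w\in W(Q)$ one has $\ell(ws_i)>\ell(w)$ if and only if $w(\alpha_i)$ is a positive root. Write $\omega'=r_{\mathbf{i}_1}\cdots r_{\mathbf{i}_{m-1}}$ for the reduced prefix of $\omega$. By the inductive hypothesis, $\psi(\omega')$, written out as a product of simple reflections, is reduced of length $\sum_{k<m}|\mathbf{i}_k|$. It therefore suffices to prove that right multiplication by $\psi(r_{\mathbf{i}_m})=\prod_{i\in\mathbf{i}_m}s_i$ raises the length by exactly $|\mathbf{i}_m|$.

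First I would reduce this to a positivity statement. Since $Q$ is acyclic there are no arrows inside the orbit $\mathbf{i}_m$, so the reflections $\{s_i:i\in\mathbf{i}_m\}$ commute pairwise and $s_i(\alpha_{i'})=\alpha_{i'}$ for $i,i'\in\mathbf{i}_m$. Hence, setting $w=\psi(\omega')$, if $w(\alpha_i)>0$ for every $i\in\mathbf{i}_m$, then appending the factors $s_i$ one at a time raises the length by one at each step (each $s_i$ is applied to $w$ twisted only by reflections that fix $\alpha_i$), giving $\ell\bigl(w\prod_{i\in\mathbf{i}_m}s_i\bigr)=\ell(w)+|\mathbf{i}_m|$. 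Thus everything comes down to the claim $\psi(\omega')(\alpha_i)>0$ for all $i\in\mathbf{i}_m$.

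To prove this I would transport positivity through the folding. Let $\{\beta_{\mathbf{j}}\}$ be the simple roots of $W(C)$, and let $\sigma_{\mathbf{j}}=\sum_{j\in\mathbf{j}}\alpha_j$ be the orbit sums, which form a basis of the $G$-invariant subspace $V^G$ of the root space of $W(Q)$. A direct computation with the reflection formulas, using $c_{\mathbf{i},\mathbf{j}}=-N_{\mathbf{i},\mathbf{j}}/|\mathbf{j}|$ and the $G$-invariance of arrow counts, shows that the linear isomorphism $\Theta\colon\beta_{\mathbf{j}}\mapsto\sigma_{\mathbf{j}}$ intertwines the action of $r_{\mathbf{j}}$ on the root space of $W(C)$ with that of $\psi(r_{\mathbf{j}})=\prod_{j\in\mathbf{j}}s_j$ on $V^G$; since $\psi$ is a homomorphism, $\Theta\circ\omega'=\psi(\omega')\circ\Theta$. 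As $\omega'$ is reduced with $\ell(\omega' r_{\mathbf{i}_m})>\ell(\omega')$, the root $\omega'(\beta_{\mathbf{i}_m})$ is positive, hence a nonnegative integer combination of the $\beta_{\mathbf{j}}$; applying $\Theta$ then shows that $\psi(\omega')(\sigma_{\mathbf{i}_m})=\sum_{i\in\mathbf{i}_m}\psi(\omega')(\alpha_i)$ is a nonzero, nonnegative combination of the $\alpha_j$.

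The crux, which I expect to be the main obstacle, is passing from positivity of this sum to positivity of each individual summand, since a priori a positive sum of roots can have negative summands. The resolution is a symmetry argument exploiting that $\psi(\omega')\in W(Q)^G$ commutes with the $G$-action on roots: the set $\{\psi(\omega')(\alpha_i):i\in\mathbf{i}_m\}$ is then a single $G$-orbit of distinct roots, and since each $g\in G$ permutes the simple roots it preserves the splitting into positive and negative roots, forcing all of $\psi(\omega')(\alpha_i)$ to have the same sign. If they were all negative, their sum would be a nonpositive combination of the $\alpha_j$, contradicting the previous step where the sum is nonzero and nonnegative; hence each $\psi(\omega')(\alpha_i)$ is positive. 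This completes the inductive step. The two delicate points are this ``all–same–sign'' argument, where the $G$-invariance of $\psi(\omega')$ and the orbit structure of $\mathbf{i}_m$ are indispensable, and the preliminary verification that $\Theta$ correctly matches the two families of structure constants.
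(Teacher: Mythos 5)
Your proof is correct, but note that the paper itself gives no argument for Lemma~\ref{lem:reduced}: it is quoted from the literature (\cite[Lemma~2]{GI}; compare \cite[Proposition~3.4]{Hee}), so what you have written is a reconstruction of the omitted proof rather than an alternative to one. Your route --- induction on $m$ via the criterion $\ell(ws_i)>\ell(w)\Leftrightarrow w(\alpha_i)>0$, transport of positivity through the folded realization on the $G$-invariant subspace, and sign-coherence of a $G$-orbit of roots under a $G$-invariant element of $W(Q)$ --- is in substance the standard argument behind those references, and its three ingredients are all sound: acyclicity of $Q$ forbids arrows inside an orbit (so the $s_i$, $i\in\mathbf{i}_m$, commute and fix one another's simple roots), $\psi(\omega')\in W(Q)^G$ commutes with the $G$-action on the root lattice, and $G$ permutes the simple roots, hence preserves the positive cone, which is exactly what rescues the step from ``positive sum'' to ``each summand positive''. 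Two points deserve care in a final write-up, though neither is a gap. First, with Kac's convention $r_{\mathbf{i}}(\beta_{\mathbf{j}})=\beta_{\mathbf{j}}-c_{\mathbf{i},\mathbf{j}}\beta_{\mathbf{i}}$ and the paper's normalization $c_{\mathbf{i},\mathbf{j}}=-N_{\mathbf{i},\mathbf{j}}/|\mathbf{j}|$, a direct computation gives $\psi(r_{\mathbf{i}})(\sigma_{\mathbf{j}})=\sigma_{\mathbf{j}}-c_{\mathbf{j},\mathbf{i}}\,\sigma_{\mathbf{i}}$ for the orbit sums $\sigma_{\mathbf{j}}=\sum_{j\in\mathbf{j}}\alpha_j$; one sees the asymmetry already in the paper's $B_2$ example, where $s_1(\alpha_2+\alpha_{2'})=\sigma_{\mathbf{2}}+2\sigma_{\mathbf{1}}$ although $c_{\mathbf{1},\mathbf{2}}=-1$. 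Thus $\Theta\colon\beta_{\mathbf{j}}\mapsto\sigma_{\mathbf{j}}$ intertwines $\psi$ with the reflection representation of the \emph{transposed} matrix $C^{T}$; the $\sigma_{\mathbf{j}}$ play the role of simple coroots of $C$ (the averages $|\mathbf{j}|^{-1}\sigma_{\mathbf{j}}$ would realize the roots). This is harmless, because $W(C)$ and $W(C^{T})$ are the same Coxeter system with the same length function and the same positivity criterion, but your intertwining claim as literally stated is off by a transpose and would fail a naive verification. Second, $s_i(\alpha_{i'})=\alpha_{i'}$ holds for \emph{distinct} $i,i'\in\mathbf{i}_m$ (for $i=i'$ one has $s_i(\alpha_i)=-\alpha_i$), which is all that your one-factor-at-a-time length computation actually uses.
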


The $G$-action on $Q$ induces a $G$-action on $WM(Q)$ by monoid automorphisms. Denote by $MW(Q)^G$ the submonoid formed by $G$-invariant elements. We observe that the bijection $\rho_Q$ is compatible with the $G$-actions. In other words, we have
$$\rho_Q(g(\omega))=g(\rho_Q(\omega))$$
for any $g\in G$ and $\omega\in W(Q)$. Here, we use implicitly the fact that the $G$-action preserves reduced expressions in $W(Q)$. Consequently, we have the restricted bijection
$$\rho_Q^G\colon W(Q)^G\longrightarrow WM(Q)^G.$$

We obtain an analogue of the isomorphism $\psi$ for Weyl monoids in the following result, which  seems to be expected by experts. 

\begin{prop}\label{prop:monoid}
    Keep the assumptions in Example~\ref{exm:GCM}. Then there is a unique isomorphism $\psi'\colon WM(C)\rightarrow WM(Q)^G$ of monoids making the following diagram commute.
    \[\xymatrix{
W(C) \ar[d]_-{\rho_C} \ar[rr]^-{\psi} && W(Q)^G \ar[d]^-{\rho_Q^G}\\
WM(C) \ar[rr]^-{\psi'} && WM(Q)^G
    }\]
\end{prop}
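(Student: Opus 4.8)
The plan is to construct $\psi'$ on generators by setting $\psi'(f_{\mathbf{i}}) = \prod_{i \in \mathbf{i}} h_i$, mirroring the definition of $\psi$ on simple reflections $r_{\mathbf{i}} \mapsto \prod_{i \in \mathbf{i}} s_i$. The commutativity of the square is then essentially forced: I want to check that $\rho_Q^G \circ \psi = \psi' \circ \rho_C$ as maps $W(C) \to WM(Q)^G$. Since $WM(C)$ is generated by the $f_{\mathbf{i}}$ and $\rho_C$ is a bijection sending a reduced expression $r_{\mathbf{i}_1} \cdots r_{\mathbf{i}_m}$ to $f_{\mathbf{i}_1} \cdots f_{\mathbf{i}_m}$, the map $\psi'$, once shown well-defined, is automatically determined by the diagram, which also gives uniqueness.

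First I would verify that $\psi'$ as defined on generators respects the defining relations of $WM(C)$, so that it extends to a genuine monoid homomorphism. This is the computational heart of the argument. For each relation among the $f_{\mathbf{i}}$ (the idempotent relation $f_{\mathbf{i}}^2 = f_{\mathbf{i}}$, the commutation relation when $c_{\mathbf{i}\mathbf{j}} = 0$, and the braid-type relations when $c_{\mathbf{i}\mathbf{j}}c_{\mathbf{j}\mathbf{i}} \in \{1,2,3\}$), I must check that the corresponding identity holds among the elements $\prod_{i \in \mathbf{i}} h_i$ in $WM(Q)^G$. The clean way to obtain these is to transport them through the bijections: I would take the corresponding relation in $W(C)$ (e.g. $(r_{\mathbf{i}}r_{\mathbf{j}})^k = 1$), apply $\psi$ to get an identity in $W(Q)^G$, note that both sides admit reduced expressions by Lemma~\ref{lem:reduced}, and then apply $\rho_Q^G$ to land in $WM(Q)^G$. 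Because $\rho_Q$ sends reduced words to the corresponding product of $h_i$'s, this converts each Weyl-group relation into exactly the Weyl-monoid relation I need.

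Concretely, the mechanism is: $\psi'$ is defined so that $\psi' \circ \rho_C = \rho_Q^G \circ \psi$ holds on simple reflections by construction, and both $\rho_C$ and $\rho_Q^G$ are bijections. Since $\psi$ is a group isomorphism and $\rho_Q^G \circ \psi \circ \rho_C^{-1}$ is a well-defined bijection $WM(C) \to WM(Q)^G$, I would simply \emph{define} $\psi' := \rho_Q^G \circ \psi \circ \rho_C^{-1}$ and then verify it is a monoid homomorphism. With this formulation the bijectivity of $\psi'$ is immediate (it is a composite of three bijections), the square commutes by definition, and uniqueness follows since any map making the square commute must agree with $\psi'$ on the image of $\rho_C$, which is all of $WM(C)$. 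The remaining content is precisely that this bijection respects multiplication.

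The main obstacle is exactly that multiplicativity check, and the subtlety lies in reducedness. The bijection $\rho_C$ is \emph{not} a homomorphism — it is a bijection of sets that becomes multiplicative only on products realizing a reduced expression. So to show $\psi'(w w') = \psi'(w)\psi'(w')$ for $w, w' \in WM(C)$, I would pull $w, w'$ back to $W(C)$ via $\rho_C^{-1}$ and reduce to showing that the set-bijection respects the monoid operation; the correct tool is the characterization of the Weyl monoid operation in terms of the group (the product $f_{\mathbf{i}_1}\cdots f_{\mathbf{i}_m}$ equals $\rho_C$ of the group element obtained by the subword/Demazure product), together with the crucial fact from Lemma~\ref{lem:reduced} that $\psi$ carries reduced expressions to reduced expressions. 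This compatibility with reducedness is what makes $\psi$ intertwine the two monoid structures, and checking it carefully — rather than the individual relation verifications — is where the real work sits.
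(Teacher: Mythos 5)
Your proposal's operative core is exactly the paper's proof: define $\psi'$ on generators by $f_{\mathbf{i}}\mapsto\prod_{i\in\mathbf{i}}h_i$, verify the defining relations of $WM(C)$ by transporting the corresponding relations of $W(C)$ through $\psi$ and the maps $\rho$ using Lemma~\ref{lem:reduced}, and obtain uniqueness because the other three sides of the square are bijections. One small imprecision: the relations must be used in braid form ($r_{\mathbf{i}}r_{\mathbf{j}}r_{\mathbf{i}}=r_{\mathbf{j}}r_{\mathbf{i}}r_{\mathbf{j}}$ when $c_{\mathbf{i}\mathbf{j}}c_{\mathbf{j}\mathbf{i}}=1$, and so on), where both sides are \emph{reduced} words for the same group element; applying $\rho_Q^G\circ\psi$ to the relation written as $(r_{\mathbf{i}}r_{\mathbf{j}})^k=1$ yields nothing, since $\rho$ of the identity element is just the identity of the monoid. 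It is precisely the reducedness of both sides of the braid relation (via Lemma~\ref{lem:reduced}) that converts the group identity into the monoid identity among the $h_i$'s, and this is how the paper argues.

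Your final paragraph, however, misplaces where the work sits. Once the relations are checked, the universal property of the presentation of $WM(C)$ produces a monoid homomorphism $\psi''$ with $\psi''(f_{\mathbf{i}})=\prod_{i\in\mathbf{i}}h_i$, so multiplicativity is automatic; no Demazure/subword-product argument is needed. What remains is only to identify $\psi''$ with the composite bijection $\rho_Q^G\circ\psi\circ\rho_C^{-1}$: for a reduced expression $w=r_{\mathbf{i}_1}\cdots r_{\mathbf{i}_m}$ in $W(C)$, multiplicativity of $\psi''$ gives $\psi''(\rho_C(w))=\prod_{j}\bigl(\prod_{i\in\mathbf{i}_j}h_i\bigr)$, while Lemma~\ref{lem:reduced} gives $\rho_Q^G(\psi(w))=\prod_{j}\bigl(\prod_{i\in\mathbf{i}_j}h_i\bigr)$; hence $\psi''\circ\rho_C=\rho_Q^G\circ\psi$, which forces $\psi''=\rho_Q^G\circ\psi\circ\rho_C^{-1}$. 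This is exactly how the paper closes the argument, and it makes the multiplicativity of the set bijection a consequence of the relation check rather than a separate task requiring the heavier machinery you sketch.
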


\begin{proof}
The commutativity implies that $\psi'=\rho_Q^G\circ \psi\circ \rho_C^{-1}$, which shows the uniqueness of such a map. It remains to show that this map $\psi'$  is indeed a morphism of monoids. 

We claim that the elements in $\{\psi'(f_\mathbf{i})=\prod_{i\in \mathbf{i}} h_i\;|\; \mathbf{i}\in Q_0/G\}$ satisfy the defining relations of $WM(C)$. Since there are no arrows between vertices in the $G$-orbit $\mathbf{i}$, we infer from $h_i^2=h_i$ that $\psi'(f_\mathbf{i})^2=\psi'(f_\mathbf{i})$. Similarly, if $c_{\mathbf{i}, \mathbf{j}}=0$, there are no arrows between $\mathbf{i}$ and $\mathbf{j}$. Then we deduce that $\psi'(f_\mathbf{i}) \psi'(f_\mathbf{j})=\psi'(f_\mathbf{j}) \psi'(f_\mathbf{i})$.

Assume that $c_{\mathbf{i}, \mathbf{j}}c_{\mathbf{j}, \mathbf{i}}=1$. Since $r_\mathbf{i}r_\mathbf{j}r_\mathbf{i}$ is a reduced expression in $W(C)$, Lemma~\ref{lem:reduced} implies that $(\prod_{i\in \mathbf{i}} s_i)(\prod_{j\in \mathbf{j}} s_j)(\prod_{i\in \mathbf{i}} s_i)$ is a reduced expression in $W(Q)$. Then by the construction of $\rho_Q$, we have the second equality in the following identity.
\begin{align*}
 \psi'(f_\mathbf{i})\psi'(f_\mathbf{j})\psi'(f_\mathbf{i}) & =(\prod_{i\in \mathbf{i}} h_i) (\prod_{j\in \mathbf{j}} h_j) (\prod_{i\in \mathbf{i}} h_i) \\
 &=\rho_Q((\prod_{i\in \mathbf{i}} s_i)(\prod_{j\in \mathbf{j}} s_j)(\prod_{i\in \mathbf{i}} s_i))\\
 &=\rho_Q\psi(r_\mathbf{i}r_\mathbf{j}r_\mathbf{i}).
\end{align*}
Similarly, we have 
$$\psi'(f_\mathbf{j})\psi'(f_\mathbf{i})\psi'(f_\mathbf{j})=\rho_Q\psi(r_\mathbf{j}r_\mathbf{i}r_\mathbf{j}).$$
Since $r_\mathbf{i}r_\mathbf{j}r_\mathbf{i}=r_\mathbf{j}r_\mathbf{i}r_\mathbf{j}$ holds in $W(C)$, we infer the desired identity
$$\psi'(f_\mathbf{i})\psi'(f_\mathbf{j})\psi'(f_\mathbf{i})=\psi'(f_\mathbf{j})\psi'(f_\mathbf{i})\psi'(f_\mathbf{j})$$
in $WM(Q)$. A similar proof works for the cases $c_{\mathbf{i}, \mathbf{j}}c_{\mathbf{j}, \mathbf{i}}=2$ and $3$. This completes the proof of the claim.

By the claim, we have a well-defined morphism 
$$\psi'' \colon WM(C)\longrightarrow WM(Q)^G$$
between monoids such that $\psi''(f_\mathbf{i})=\psi'(f_\mathbf{i})$ for each $\mathbf{i}\in Q_0/G$. By Lemma~\ref{lem:reduced}, it is direct to check that $\psi''\circ \rho_C=\rho_Q^G\circ \psi$. This will force that $\psi''=\psi'$, which completes the proof. 
\end{proof}

Let $Q$ be a finite acylic quiver. For each $i\in Q_0$, write $I_i=\Pi(Q)(1-e_i)\Pi(Q)$; it is a two-sided ideal of $\Pi(Q)$. Denote by $\langle I_i\; |\; i\in Q_0 \rangle$ the sub monoid of $\mathbf{I}(\Pi(Q))$ generated by these ideals $I_i$.

The following isomorphism is  essentially  due to \cite[Theorem~III.1.9]{BIRS} and \cite[Theorem~2.14]{Mizuno}.

\begin{thm}\label{thm:Iyama}
Let $Q$ be a finite acyclic quiver. There is an isomorphism 
$$\Theta'_Q\colon WM(Q)\longrightarrow \langle I_i\; |\; i\in Q_0 \rangle$$
between monoids, which sends $h_i$ to $I_i$ for each $i\in Q_0$.
Consequently, we have a bijection
$$\Theta_Q=\Theta'_Q\circ \rho_Q\colon W(Q)\longrightarrow \langle I_i\; |\; i\in Q_0 \rangle.$$
\end{thm}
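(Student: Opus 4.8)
```latex
\textbf{Proof proposal.}

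The plan is to establish the monoid isomorphism $\Theta'_Q\colon WM(Q)\to \langle I_i\mid i\in Q_0\rangle$ directly; the bijection $\Theta_Q$ then follows formally by composing with the bijection $\rho_Q$ of \cite{Ts}, which is already available. The strategy is to invoke the results of \cite{BIRS} and \cite{Mizuno}, which are stated precisely for this purpose, and then to reconcile the relations. First I would recall that in \cite{BIRS, Mizuno} the ideals $I_i$ are shown to satisfy exactly the defining relations of the Weyl monoid $WM(Q)$: namely $I_i^2=I_i$ (since $1-e_i$ is idempotent modulo the ideal it generates, so $I_i$ is idempotent as an ideal), $I_iI_j=I_jI_i$ when there is no arrow between $i$ and $j$, and $I_iI_jI_i=I_jI_iI_j$ when there is precisely one arrow between $i$ and $j$. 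These are precisely the braid-type relations verified in \cite[Theorem~III.1.9]{BIRS} and \cite[Theorem~2.14]{Mizuno}.

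Given that the generators $I_i$ satisfy the defining relations of $WM(Q)$, there is a unique surjective monoid homomorphism $\Theta'_Q\colon WM(Q)\to\langle I_i\mid i\in Q_0\rangle$ sending $h_i$ to $I_i$; surjectivity is immediate since the $I_i$ generate the target monoid by definition. The remaining and essential point is injectivity. For this I would rely on the fact, established in \cite{BIRS, Mizuno}, that the distinct reduced products of the $I_i$ are in bijection with the elements of $W(Q)$, equivalently with the elements of $WM(Q)$ via the normal-form description of $\rho_Q$. Concretely, since $\rho_Q\colon W(Q)\to WM(Q)$ is a bijection sending a reduced word $s_{i_1}\cdots s_{i_n}$ to $h_{i_1}\cdots h_{i_n}$, every element of $WM(Q)$ has a representative coming from a reduced word in $W(Q)$, and the composite $W(Q)\to WM(Q)\to\langle I_i\rangle$ agreeing on reduced words matches the bijection of \cite{BIRS, Mizuno} between $W(Q)$ and the ideal monoid. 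This forces $\Theta'_Q$ to be injective.

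The main obstacle I anticipate is not constructing the map but verifying injectivity honestly, i.e.\ that two distinct elements of $WM(Q)$ (equivalently two distinct elements of $W(Q)$) yield genuinely distinct ideals. This is the content of the deep results in \cite{BIRS, Mizuno}, where one shows that the ideal $I_{i_1}\cdots I_{i_n}$ attached to a reduced word $\omega=s_{i_1}\cdots s_{i_n}$ depends only on $\omega$ and determines it, typically by identifying the factor module $\Pi(Q)/(I_{i_1}\cdots I_{i_n})$ (or an associated support/layer structure) as an invariant recovering $\omega$. Since the statement explicitly attributes the isomorphism to \cite[Theorem~III.1.9]{BIRS} and \cite[Theorem~2.14]{Mizuno}, my proof would quote these for the well-definedness and bijectivity, and then only need to check that the multiplicative structure matches, so that $\Theta'_Q$ is an isomorphism of monoids and not merely a bijection of underlying sets. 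The final sentence, producing $\Theta_Q=\Theta'_Q\circ\rho_Q$, is then a one-line consequence of composing two bijections.
```
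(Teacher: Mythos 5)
Your proposal is correct and follows essentially the same route as the paper: verify that the ideals $I_i$ satisfy the defining relations of $WM(Q)$ (the paper cites \cite[Proposition~III.1.8]{BIRS} for this, rather than the theorems you name), note surjectivity is immediate, and obtain injectivity by matching $\Theta'_Q\circ\rho_Q$ on reduced words with the bijection $W(Q)\to\langle I_i\;|\;i\in Q_0\rangle$ of \cite[Theorem~III.1.9]{BIRS} and \cite[Theorem~2.14]{Mizuno}. Your explicit observation that this identification, together with bijectivity of $\rho_Q$, forces $\Theta'_Q$ to be bijective is exactly the mechanism the paper relies on, just spelled out slightly more fully.
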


\begin{proof}
The well-definedness of the morphism $\Theta_Q'$ is due to \cite[Propostion~III.1.8]{BIRS}. It is clearly surjective. For the injectivity, we refer to the proofs of  \cite[Theorem~III.1.9]{BIRS} and \cite[Theorem~2.14]{Mizuno}.
\end{proof}

Let $G$ be a finite group which acts on $Q$. For each $\mathbf{i}\in Q_0/G$, we write 
$$I_{\mathbf{i}}=\prod_{i\in \mathbf{i}} I_i.$$ 
This is well defined, since $I_i$ and $I_j$ commute for $i,j\in \mathbf{i}$; see \cite[Propostion~III.1.8]{BIRS}. We observe that $I_{\mathbf{i}}$ is a $G$-invariant ideal of $\Pi(Q)$. Then $I_{\mathbf{i}}\#G$ is a two-sided ideal of $\Pi(Q)\#G$; see Proposition~\ref{prop:inv-ideal}. Denote by $\langle I_{\mathbf{i}}\#G\; |\; \mathbf{i}\in Q_0/G\rangle$ the sub monoid of $\mathbf{I}(\Pi(Q)\#G)$ generated by $I_{\mathbf{i}}\#G$.

\begin{prop}\label{prop:Iyama-G}
    Let $G$ be a finite group which acts on a finite acyclic quiver $Q$. Then there is an isomorphism of monoids $WM(Q)^G\rightarrow \langle I_{\mathbf{i}}\#G\; |\; \mathbf{i}\in Q_0/G\rangle$.
\end{prop}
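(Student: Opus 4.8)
The plan is to establish the isomorphism $WM(Q)^G \to \langle I_{\mathbf{i}}\#G \mid \mathbf{i}\in Q_0/G\rangle$ by assembling tools already developed in the excerpt. First I would recall from Theorem~\ref{thm:Iyama} the isomorphism $\Theta'_Q\colon WM(Q)\to \langle I_i\mid i\in Q_0\rangle$ sending $h_i$ to $I_i$. Since the $G$-action on $Q$ induces compatible $G$-actions on $WM(Q)$ (by monoid automorphisms permuting the $h_i$) and on $\langle I_i\mid i\in Q_0\rangle\subseteq \mathbf{I}(\Pi(Q))$ (permuting the ideals $I_i$ via $g(I_i)=I_{g(i)}$), and since $\Theta'_Q$ sends $h_i$ to $I_i$, this isomorphism is $G$-equivariant. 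Restricting to $G$-invariant elements therefore yields an isomorphism of monoids
\[
(\Theta'_Q)^G\colon WM(Q)^G\longrightarrow \langle I_i\mid i\in Q_0\rangle^G,
\]
where the target is the submonoid of $G$-invariant elements of $\langle I_i\mid i\in Q_0\rangle$.

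The second step is to identify $\langle I_i\mid i\in Q_0\rangle^G$ with $\langle I_{\mathbf{i}}\#G\mid \mathbf{i}\in Q_0/G\rangle$. Here I would invoke Proposition~\ref{prop:inv-ideal}(3), which gives an isomorphism of monoids $\mathbf{I}(\Pi(Q))^G\to \mathbf{I}(\Pi(Q)\#G)_G$ sending a $G$-invariant ideal $I'$ to $I'\#G$. I first need to check that every element of $\langle I_i\mid i\in Q_0\rangle^G$ actually lies in $\mathbf{I}(\Pi(Q))^G$, which is automatic, and that it maps into the monoid generated by the $I_{\mathbf{i}}\#G$. The key computation is that the isomorphism of Proposition~\ref{prop:inv-ideal}(3) carries each generator $I_{\mathbf{i}}=\prod_{i\in\mathbf{i}}I_i$ to $I_{\mathbf{i}}\#G$; since $I_{\mathbf{i}}$ is $G$-invariant (a product of ideals permuted by $G$ within a single orbit), this is a direct application. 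Because a monoid isomorphism carries a submonoid generated by a set isomorphically onto the submonoid generated by the images, restricting Proposition~\ref{prop:inv-ideal}(3) to the relevant submonoids gives an isomorphism
\[
\langle I_i\mid i\in Q_0\rangle^G \longrightarrow \langle I_{\mathbf{i}}\#G\mid \mathbf{i}\in Q_0/G\rangle.
\]

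Composing the two isomorphisms then produces the desired isomorphism $WM(Q)^G\to \langle I_{\mathbf{i}}\#G\mid \mathbf{i}\in Q_0/G\rangle$. The main obstacle, and the step requiring genuine care rather than bookkeeping, is verifying that the submonoid $\langle I_i\mid i\in Q_0\rangle^G$ of $G$-invariant elements is \emph{generated} precisely by the products $I_{\mathbf{i}}=\prod_{i\in\mathbf{i}}I_i$ over the orbits. That is, one must show every $G$-invariant element of $\langle I_i\mid i\in Q_0\rangle$ can be written as a product of the orbit-ideals $I_{\mathbf{i}}$. I would deduce this by transporting the question through $(\Theta'_Q)^{-1}$ to the Weyl monoid side, where $G$-invariant elements of $WM(Q)$ correspond under $\rho_Q$ to $G$-invariant elements of $W(Q)$; by the isomorphism $\psi$ of Example~\ref{exm:GCM} and Proposition~\ref{prop:monoid}, these invariant elements are generated by the products $\prod_{i\in\mathbf{i}}h_i=\psi'(f_{\mathbf{i}})$, which $\Theta'_Q$ sends to $I_{\mathbf{i}}$. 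This is where the structural input from the folding process (Lemma~\ref{lem:reduced} and Proposition~\ref{prop:monoid}) is essential, since a naive argument could fail to capture all invariant products.
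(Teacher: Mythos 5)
Your proposal is correct and follows essentially the same route as the paper's proof: both restrict the $G$-equivariant isomorphism $\Theta'_Q$ to invariants, reduce the key generation claim (that the invariant monoid is generated by the orbit products $I_{\mathbf{i}}$, equivalently that $WM(Q)^G$ is generated by $\prod_{i\in\mathbf{i}}h_i$) to the folding results of Section~5, and then apply Proposition~\ref{prop:inv-ideal}(3) to identify $\langle I_i\;|\;i\in Q_0\rangle^G$ with $\langle I_{\mathbf{i}}\#G\;|\;\mathbf{i}\in Q_0/G\rangle$ via $I\mapsto I\#G$. The only cosmetic difference is that you invoke Proposition~\ref{prop:monoid} directly for the generation step, whereas the paper cites the group-level generation of $W(Q)^G$ together with the bijection $\rho_Q^G$, which rests on the same reduced-expression input (Lemma~\ref{lem:reduced}).
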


\begin{proof}
The bijection $\rho_Q$ restricts to the bijection $\rho_Q^G\colon W(Q)^G\rightarrow WM(Q)^G$. Since $W(Q)^G$ is generated by $\{\prod_{i\in \mathbf{i}}s_i\;|\; \mathbf{i}\in Q_0/G\}$, it follows that the monoid $WM(Q)^G$ is generated by $\{\prod_{i\in \mathbf{i}}h_i\;|\; \mathbf{i}\in Q_0/G\}$.  The isomorphism in Theorem~\ref{thm:Iyama} implies the following observation: 
$\langle I_i\; |\; i\in Q_0\rangle^G$ is generated by $\{I_\mathbf{i}\; |\; \mathbf{i}\in Q_0/G\}$. 

By the isomorphism in Proposition~\ref{prop:inv-ideal}(3), we have an injective morphism of monoids
$$\langle I_i\; |\; i\in Q_0\rangle^G  \hookrightarrow \mathbf{I}(\Pi(Q)\#G), \; I \mapsto I\#G.$$
The observation above implies that the image of this morphism is precisely $\langle I_{\mathbf{i}}\#G\; |\; \mathbf{i}\in Q_0/G\rangle$. Combining this with  the isomorphism $\Theta'_Q$ in Theorem~\ref{thm:Iyama}, we complete the proof.\end{proof}

\begin{rem}\label{rem:monoid}
Recall from Example~\ref{exm:GCM} that the Cartan matrix $C$ is associated to the $G$-action. Recall that $\Theta_Q=\Theta'_Q \circ \rho_Q$. By the commutative square in Proposition~\ref{prop:monoid}, we have the following commutative square consisting of bijections. 
\[\xymatrix{
W(C)\ar[d] \ar[rr]^-{\psi} && W(Q)^G\ar[d]^-{\Theta_Q^G}\\
\langle I_\mathbf{i}\# G\; |\; \mathbf{i}\in Q_0/G\rangle && \ar[ll]_-{-\#G} \langle I_i\; |\; i\in Q_0\rangle^G
}\]
Here, the unnamed vertical arrow is given by $(-\#G)\circ (\Theta'^G_Q)\circ \psi'\circ \rho_C$.
\end{rem}

\section{Algebras associated to Cartan triples}
In this section, we recall the generalized preprojective algebras from \cite{GLS17} and prove Proposition~\ref{prop:Iyama-FG}, which is Proposition~A in Introduction.

Recall from \cite{CW24} that a \emph{Cartan triple} $(C, D, \Omega)$ consists of a Cartan matrix $C=(c_{ij})\in M_n(\mathbb{Z})$, its symmetrizer $D={\rm diag}(c_1, c_2, \cdots, c_n)$ and an acyclic orientation $\Omega$.  Here, we recall that an  \emph{acyclic orientation} $\Omega$ on $C$ is a subset of $\{1, 2, \cdots, n\}\times \{1, 2, \cdots, n\}$ subject to the following conditions.
\begin{enumerate}
\item[(O1)] $\{(i, j), (j, i)\}\cap \Omega  \neq \emptyset$ if and only if $c_{ij}<0$; 
\item[(O2)] for each sequence $(i_1, i_2, \cdots, i_{t}, i_{t+1})$ with $t\geq 1$ such that $(i_s, i_{s+1})\in \Omega$ for all $1\leq s\leq t$, we necessarily have $i_1 \neq i_{t+1}$.
\end{enumerate}

Let $Q=Q(C,\Omega)$ be the finite quiver with the set of vertices $Q_0=\{1,2, \cdots,n\}$ and with the set of arrows
\[Q_1=\{\alpha^{(g)}_{ij}\colon j\rightarrow i\mid(i,j)\in \Omega, 1\leq g\leq {\rm gcd}(c_{ij},c_{ji})\}\sqcup\{\varepsilon_i\colon i\rightarrow i\mid 1\leq i\leq n\}.\]
Here, for any nonzero integers $c, c'$, their greatest common divisor ${\rm gcd}(c, c')$ is defined to be positive.

Following \cite[Subsection~1.4]{GLS17} and \cite{Gei}, we associate a finite dimensional algebras $H(C,D,\Omega)$ to any Cartan triple $(C, D, \Omega)$ as follows
\[H(C,D,\Omega)=\mathbb{K}Q/I,\]
where $\mathbb{K}Q$ is the path algebra of $Q=Q(C, \Omega)$, and $I$ is the two-sided ideal of $\mathbb{K}Q$ generated by  the following set
$$\; \{\varepsilon_k^{c_k}, \; \varepsilon_i^{\frac{c_i}{{\rm gcd}(c_i,c_j)}} \alpha^{(g)}_{ij}-\alpha^{(g)}_{ij}\varepsilon_j^{\frac{c_j}{{\rm gcd}(c_i,c_j)}}\mid k\in Q_0, \;  (i,j)\in \Omega, \; 1\leq g\leq {\rm gcd}(c_{ij},c_{ji})\}.$$
Here, $\varepsilon_k^{c_k}$ is called the \emph{nilpotency relation}, and $\varepsilon_i^{\frac{c_i}{{\rm gcd}(c_i,c_j)}} \alpha^{(g)}_{ij}-\alpha^{(g)}_{ij}\varepsilon_j^{\frac{c_j}{{\rm gcd}(c_i,c_j)}}$ the \emph{commuativity relation}. 

Let $(C, D, \Omega)$ be a Cartan triple. The opposite orientation of $\Omega$ is $\Omega^{\rm op}=\{(j, i)\; |\; (i, j)\in \Omega\}$. Set $\overline{\Omega}=\Omega\cup \Omega^{\rm op}$. For each  $(i, j)\in \overline{\Omega}$, we define
\begin{align*}
    {\rm sgn}(i, j)=\left \{ \begin{array}{cc}
         1, \mbox{ if } (i, j)\in \Omega;  \\
         -1, \mbox{ if } (i, j)\in \Omega^{\rm op}. 
    \end{array} \right. 
\end{align*}

Denote by $\widetilde{Q}=\widetilde{Q}(C, \Omega)$ the quiver obtained from $Q=Q(C, \Omega)$ by adding a new arrow $\alpha_{ji}^{(g)}\colon i\rightarrow j$ for each arrow $\alpha_{ij}^{(g)}\colon j\rightarrow i$.  We mention that $\widetilde{Q}$ is not the double quiver of $Q$, since we do not double the loops. 

\begin{defn}
Let $(C, D, \Omega)$ be a Cartan triple and set $\widetilde{Q}=\widetilde{Q}(C, \Omega)$. The \emph{generalized preprojective algebra} is defined to be 
$$\Pi(C, D, \Omega)=\mathbb{K}\widetilde{Q}/{\widetilde{I}},$$
where the two-sided ideal $\widetilde{I}$ of $\mathbb{K}\widetilde{Q}$ is determined by the following relations.
\begin{enumerate}
    \item[(P1)] For each vertex $i\in Q_0$, we have the nilpotency relation $\varepsilon_i^{c_i}=0$.
    \item[(P2)] For each $(i, j)\in \overline{\Omega}$ and $1\leq g\leq {\rm gcd}(c_{ij}, c_{ji})$, we have the commutativity relation $\varepsilon_i^{\frac{c_i}{{\rm gcd}(c_i,c_j)}} \alpha^{(g)}_{ij}-\alpha^{(g)}_{ij}\varepsilon_j^{\frac{c_j}{{\rm gcd}(c_i,c_j)}}$.
    \item[(P3)] For each vertex $i\in Q_0$, we have the \emph{mesh relation}
    $$\sum_{\{j\in Q_0\; |\; (i, j)\in \overline{\Omega}\}} \sum_{g=1}^{{\rm gcd}(c_{ij}, c_{ji})} \sum_{l=0}^{\frac{c_i}{{\rm gcd}(c_i, c_j)}-1} {\rm sgn}(i,j)\; \varepsilon_i^l \alpha_{ij}^{(g)}\alpha_{ji}^{(g)}\varepsilon_i^{\frac{c_i}{{\rm gcd}(c_i, c_j)}-1-l}=0.$$
\end{enumerate}
\end{defn}

The following result is essentially due to \cite[Theorem~1.6]{GLS17}.

\begin{prop}\label{prop:GLS}
    Let $(C, D, \Omega)$ be a Cartan triple with $H=H(C, D, \Omega)$. Then we have an isomorphism  of algebras
    $$\Pi(C, D, \Omega)\simeq \Pi_2(H).$$ 
\end{prop}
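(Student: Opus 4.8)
The plan is to reduce the statement to a computation of an Ext-bimodule, exactly as in the hereditary situation of Lemma~\ref{lem:quiver}, and then to match the resulting tensor algebra with the quiver-and-relations presentation of $\Pi(C,D,\Omega)$; this last matching is the content of \cite[Theorem~1.6]{GLS17}. By definition $\Pi_2(H)=T_H({\rm Ext}^1_{H^e}(H,H^e))$, so everything comes down to identifying the $H$-$H$-bimodule ${\rm Ext}^1_{H^e}(H,H^e)$ together with the algebra it generates. First I note that $H$ is finite dimensional: the nilpotency relations make each loop $\varepsilon_i$ nilpotent, while the arrows $\alpha^{(g)}_{ij}$ span an acyclic subquiver because $\Omega$ is acyclic. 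Hence Lemma~\ref{lem:fd} applies and yields an isomorphism of $H$-$H$-bimodules ${\rm Ext}^1_{H^e}(H,H^e)\simeq {\rm Ext}^1_H(DH,H)$. It therefore suffices to prove $\Pi(C,D,\Omega)\simeq T_H({\rm Ext}^1_H(DH,H))$.

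To see this directly (and to recover \cite{GLS17} in the present language), I would compute the start of a minimal projective bimodule resolution of $H$,
\[
P_2 \longrightarrow P_1 \longrightarrow P_0 \longrightarrow H \longrightarrow 0,
\]
where $P_0=\bigoplus_{i\in Q_0}He_i\otimes e_iH$ is indexed by vertices, $P_1$ by the arrows of $Q=Q(C,\Omega)$ (the loops $\varepsilon_i$ and the $\alpha^{(g)}_{ij}$), and $P_2$ by the defining relations of $H$. Dualizing with ${\rm Hom}_{H^e}(-,H^e)$ and using the self-duality (\ref{iso:bi-dual}), one obtains a complex $P_0^*\to P_1^*\to P_2^*$ whose first cohomology is ${\rm Ext}^1_{H^e}(H,H^e)=\ker(P_1^*\to P_2^*)/{\rm im}(P_0^*\to P_1^*)$. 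As in (\ref{equ:quiver}), the free bimodule $P_1^*$ has one generator per arrow, namely the reversed arrows.

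The decisive step is to read this subquotient off as a bimodule presentation. The kernel condition $\ker(P_1^*\to P_2^*)$, governed by the nilpotency and commutativity relations of $H$ placed in $P_2$, ensures that the reversed loops do not contribute, so that ${\rm Ext}^1_{H^e}(H,H^e)$ is generated over $H$ by the reversed arrows $\alpha^{(g)}_{ji}$ alone; this is precisely why $\widetilde{Q}$ adjoins the new arrows but does not double the loops. The remaining quotient by ${\rm im}(P_0^*\to P_1^*)$ imposes exactly one relation per vertex $i$, and one checks that this relation is the mesh relation (P3). Forming the tensor algebra attaches these generators to $H$ and imposes these relations, while (P1) and (P2) persist from $H$; hence $T_H({\rm Ext}^1_{H^e}(H,H^e))\simeq \mathbb{K}\widetilde{Q}/\widetilde{I}=\Pi(C,D,\Omega)$.

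The main obstacle is the explicit identification in the previous paragraph: one must compute the connecting differential $P_0^*\to P_1^*$ and verify that its image produces the mesh relations (P3) with the correct signs ${\rm sgn}(i,j)$ and the correct powers $\varepsilon_i^l$, while simultaneously checking through $\ker(P_1^*\to P_2^*)$ that the reversed loops do not survive. This is the delicate homological bookkeeping carried out in \cite[Theorem~1.6]{GLS17}; in the present paper it is cleanest simply to invoke that result and combine it with Lemma~\ref{lem:fd}, concluding $\Pi(C,D,\Omega)\simeq T_H({\rm Ext}^1_H(DH,H))\simeq T_H({\rm Ext}^1_{H^e}(H,H^e))=\Pi_2(H)$.
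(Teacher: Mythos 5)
Your proposal is correct and follows essentially the same route as the paper: observe that $H$ is finite dimensional, apply Lemma~\ref{lem:fd} to identify ${\rm Ext}^1_{H^e}(H,H^e)$ with ${\rm Ext}^1_H(DH,H)$, and then invoke \cite[Theorem~1.6]{GLS17} for the isomorphism $\Pi(C,D,\Omega)\simeq T_H({\rm Ext}^1_H(DH,H))$. The intermediate sketch of the projective bimodule resolution is exactly the bookkeeping hidden inside the cited theorem, and since you ultimately defer to it, your argument coincides with the paper's.
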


\begin{proof}
Since the algebra $H$ is finite dimensional, by Lemma~\ref{lem:fd} we have an isomorphism  ${\rm Ext}^1_{H^e}(H, H^e)\simeq {\rm Ext}_H^1(DH, H)$ of $H$-$H$-bimodules. Then the required isomorphism follows from \cite[Theorem~1.6]{GLS17}.
\end{proof}

For each vertex $i\in Q_0$, we denote by $e_i$ the corresponding idempotent of $\Pi(C, D, \Omega)$. Denote by $L_i$ the two-sided ideal generated by $1-e_i$. Denote by $\langle L_i\; |\; i\in Q_0 \rangle$ the sub monoid of $\mathbf{I}(\Pi(C, D, \Omega))$ generated by these $L_i$'s. Consider the Weyl group $W(C)$, whose simple reflections are denoted by $r_i$ for each  $ i\in Q_0$.

The following result is essentially due to \cite[Theorem~4.7]{FG}.

\begin{thm}\label{thm:FG}
Let $(C, D, \Omega)$ be a Cartan triple. There is an isomorphism 
$$\Theta'_C\colon WM(C)\longrightarrow \langle L_i\; |\; i\in Q_0 \rangle$$
of monoids, which sends $f_i$ to $L_i$. Consequently, we have a bijection
$$\Theta_C=\Theta'_C\circ \rho_C\colon W(C)\longrightarrow \langle L_i\; |\; i\in Q_0 \rangle.$$
\end{thm}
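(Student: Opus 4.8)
The plan is to follow the proof of Theorem~\ref{thm:Iyama} closely, with the substantive input now drawn from \cite[Theorem~4.7]{FG} in place of \cite{BIRS, Mizuno}. The first task is to show that the assignment $f_i\mapsto L_i$ extends to a well-defined homomorphism of monoids $\Theta'_C$. For this one checks that the ideals $L_i$ satisfy the defining relations of $WM(C)$, namely $L_i^2=L_i$, together with $L_iL_j=L_jL_i$ when $c_{ij}=0$, $L_iL_jL_i=L_jL_iL_j$ when $c_{ij}c_{ji}=1$, $(L_iL_j)^2=(L_jL_i)^2$ when $c_{ij}c_{ji}=2$, and $(L_iL_j)^3=(L_jL_i)^3$ when $c_{ij}c_{ji}=3$. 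These ideal identities are precisely what \cite{FG} provides. Granting them, $\Theta'_C$ is a monoid homomorphism, and it is surjective because $\langle L_i\mid i\in Q_0\rangle$ is generated by the $L_i$ by definition.

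It then remains to prove injectivity, and here I would transport the question to the Weyl group rather than argue directly on $WM(C)$. By \cite[Theorem~4.7]{FG}, the map $W(C)\to \langle L_i\mid i\in Q_0\rangle$ sending a reduced expression $r_{i_1}\cdots r_{i_m}$ to the product $L_{i_1}\cdots L_{i_m}$ is a bijection; by construction this map coincides with $\Theta'_C\circ\rho_C$, where $\rho_C\colon W(C)\to WM(C)$ is the bijection of \cite[Theorem~1]{Ts}. Since both $\Theta'_C\circ\rho_C$ and $\rho_C$ are bijections, so is $\Theta'_C=(\Theta'_C\circ\rho_C)\circ\rho_C^{-1}$. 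Hence $\Theta'_C$ is a bijective monoid homomorphism, that is, an isomorphism, and the final assertion that $\Theta_C=\Theta'_C\circ\rho_C$ is a bijection is exactly the factorization just used.

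I expect the main obstacle to be the injectivity packaged into \cite[Theorem~4.7]{FG}: the real content is that distinct elements of $W(C)$ yield distinct products of the ideals $L_i$, which cannot be detected by monoid combinatorics alone and relies on the representation theory of the generalized preprojective algebra $\Pi(C,D,\Omega)$---for instance a faithful dictionary between reduced words and the associated torsion classes or layer modules. In my write-up this entire difficulty is imported from \cite{FG}, so the only genuinely new step is the relation-by-relation verification of well-definedness in the first paragraph, which I anticipate to be routine once the ideal identities of \cite{FG} are invoked.
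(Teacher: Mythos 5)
Your proposal is correct and takes essentially the same approach as the paper: well-definedness of $\Theta'_C$ via the ideal relations supplied by \cite{FG} (the paper cites \cite[Proposition~4.6]{FG} for this), surjectivity by definition, and injectivity delegated to \cite[Theorem~4.7]{FG}. The only cosmetic difference is that you deduce injectivity formally from the \emph{statement} of the bijection in \cite[Theorem~4.7]{FG} combined with bijectivity of $\rho_C$, whereas the paper refers to the \emph{proof} of that theorem; both amount to the same reduction.
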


\begin{proof}
    The well-definedness of the morphism $\Theta_C'$ is due to \cite[Proposition~4.6]{FG}, which is clearly surjective. For the injectivitiy, we refer to the proof of \cite[Theorem~4.7]{FG}.
\end{proof}

In the example below, we see that the ismorphism in Theorem~\ref{thm:FG} extends the one in Theorem~\ref{thm:Iyama}.

\begin{exm}
{\rm Let $Q$ be a finite acyclic quiver. Then it corresponds to a symmetric Cartan matrix $C$. Denote by $I_{Q_0}$ the identity matrix with rows and columns indexed by $Q_0$. The set  $Q_1$ of arrows yields an acyclic orientation $\Omega$ on $Q_0$ in the obvious manner: there is an arrow from $i$ to $j$ in $Q$ if and only if $(j, i)$ belongs to $\Omega$. 

We have algebra isomorphisms $\mathbb{K}Q\simeq H(C, I_{Q_0}, \Omega)$ and $\Pi(Q)
\simeq \Pi(C, I_{Q_0}, \Omega)$. Since $WM(Q)=WM(C)$, therefore in this situation, the isomorphisms in Theorem~\ref{thm:FG} coincides with the one in Theorem~\ref{thm:Iyama}. }
\end{exm}

Let $G$ be a finite group, which acts on a finite acyclic quiver $Q$. Following the folding process in Example~\ref{exm:GCM}, we denote by $(C, D, \Omega)$ the associated Cartan triple. Here, the rows and columns of $C$ and $D$ are indexed by the orbit set $Q_0/G$. The acyclic orientation $\Omega$ is defined to such that $(\mathbf{j}, \mathbf{i})$ belongs to $\Omega$ if and only if there is some arrow from the orbit $\mathbf{i}$ to the orbit $\mathbf{j}$ in $Q$. We mention that each Cartan triple arises in this way; compare \cite[Remark~6.9]{CW24} and \cite[Section~14.1]{Lust}

Write $\Pi=\Pi(C, D, \Omega)$. Each vertex $\mathbf{i}$ in $Q(C, \Omega)_0=Q_0/G$ gives rise to an idempotent $e_\mathbf{i}$ of  $\Pi$. Recall that $L_\mathbf{i}=\Pi (1-e_\mathbf{i})\Pi$.

The following square compares the bijections obtained in \cite{BIRS, Mizuno} and \cite{FG}.

\begin{prop}\label{prop:Iyama-FG}
  Keep the assumptions above. Then there is a unique isomorphism $\Psi$ between monoids making the following diagram commute. 
\[\xymatrix{
W(C)\ar[rr]^\psi \ar[d]_-{\Theta_C} && W(Q)^G \ar[d]^-{\Theta_Q^G}\\
\langle L_\mathbf{i}\; |\; \mathbf{i}\in Q_0/G\rangle  \ar[rr]^-{\Psi} && \langle  I_i\; |\; i\in Q_0\rangle^G
}\]
\end{prop}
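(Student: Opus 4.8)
The plan is to define $\Psi$ by the only formula the commutativity allows and then recognize that formula as a composite of monoid isomorphisms, routed through the intermediate Weyl monoids. Since $\psi$, $\Theta_C$ and $\Theta_Q^G$ are all bijections, the required commutativity $\Theta_Q^G\circ\psi=\Psi\circ\Theta_C$ forces
$$\Psi=\Theta_Q^G\circ\psi\circ\Theta_C^{-1},$$
which settles uniqueness immediately. It then remains only to check that this bijection is a morphism of monoids, hence an isomorphism.

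First I would record the factorizations $\Theta_C=\Theta'_C\circ\rho_C$ from Theorem~\ref{thm:FG} and $\Theta_Q=\Theta'_Q\circ\rho_Q$ from Theorem~\ref{thm:Iyama}, in which $\Theta'_C$ and $\Theta'_Q$ are genuine monoid isomorphisms while $\rho_C$ and $\rho_Q$ are the bijections of \cite{Ts}. I would then verify that $\Theta'_Q$ is $G$-equivariant: on the generators one has $g(h_i)=h_{g(i)}$ and $\Theta'_Q(h_i)=I_i$ with $g(I_i)=I_{g(i)}$, so $\Theta'_Q$ intertwines the two $G$-actions. Consequently $\Theta'_Q$ restricts to a monoid isomorphism $(\Theta'_Q)^G\colon WM(Q)^G\to\langle I_i\mid i\in Q_0\rangle^G$ between fixed submonoids, and $\Theta_Q^G=(\Theta'_Q)^G\circ\rho_Q^G$, using the restricted bijection $\rho_Q^G$ already constructed.

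Substituting these factorizations into the formula for $\Psi$ yields
$$\Psi=(\Theta'_Q)^G\circ\bigl(\rho_Q^G\circ\psi\circ\rho_C^{-1}\bigr)\circ(\Theta'_C)^{-1}.$$
Now Proposition~\ref{prop:monoid} identifies the bracketed composite precisely as the monoid isomorphism $\psi'\colon WM(C)\to WM(Q)^G$, since its defining property is exactly $\psi'\circ\rho_C=\rho_Q^G\circ\psi$. Hence
$$\Psi=(\Theta'_Q)^G\circ\psi'\circ(\Theta'_C)^{-1}$$
is a composite of three monoid isomorphisms, and is therefore an isomorphism of monoids, as required.

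I expect no serious obstacle here: the substantive work has already been isolated in Proposition~\ref{prop:monoid}, the monoid-level analogue of the folding isomorphism $\psi$, together with the two monoid isomorphisms $\Theta'_C$ and $\Theta'_Q$. The only genuine verification left is the $G$-equivariance of $\Theta'_Q$ (and of $\rho_Q$), which guarantees that the restrictions to $G$-fixed submonoids are well defined; this is the step where I would be most careful, but it reduces to the behaviour on generators noted above.
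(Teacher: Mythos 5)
Your proof is correct and follows essentially the same route as the paper: uniqueness is forced by the three bijections, and existence comes from factoring $\Theta_C=\Theta'_C\circ\rho_C$, $\Theta_Q=\Theta'_Q\circ\rho_Q$ and invoking Proposition~\ref{prop:monoid} to write $\Psi=(\Theta'_Q)^G\circ\psi'\circ(\Theta'_C)^{-1}$ as a composite of three monoid isomorphisms. Your explicit check of the $G$-equivariance of $\Theta'_Q$ (and hence that it restricts to fixed submonoids) is a detail the paper leaves implicit, but it is the same argument.
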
 

We mention that $\Psi(L_\mathbf{i})=I_\mathbf{i}$ for each $\mathbf{i}\in Q_0/G$.

\begin{proof}
   The uniqueness of $\Psi$ is clear, since the other three maps in the square are all bijections. Recall that $\Theta_C=\Theta'_C\circ \rho_C$ and $\Theta_Q=\Theta'_Q\circ \rho_Q$. In view of the commutative square in Proposition~\ref{prop:monoid}, it suffices to take 
   $$\Psi=\Theta'^G_Q\circ \psi'\circ (\Theta'_C)^{-1}.$$ 
   As  a composition of three isomorphisms between monoids, $\Psi$ is an isomorphism of  monoids. 
\end{proof}

\section{The main result} 

In this section, we establish a Morita equivalence involving preprojective algebras in the folding process.  We assume that ${\rm char}(\mathbb{K})=p>0$ and that $G=\langle \sigma\; |\; \sigma^{p^a}=1\rangle$ is a cyclic $p$-group for some $a\geq 1$.

Let $Q$ be a finite acyclic quiver. For each vertex $i\in Q_0$, we denote by $S_i$ the $1$-dimensional simple $\mathbb{K}Q$-module concentrated in the vertex $i$. It is naturally a simple $\Pi(Q)$-module. Recall that $I_i=\Pi(Q)(1-e_i)\Pi(Q)$. Then we have an exact sequence
\begin{align*}
0\longrightarrow I_i\longrightarrow \Pi(Q)\longrightarrow S_i\longrightarrow 0
\end{align*}
of left $\Pi(Q)$-modules.

Fix a $G$-action on $Q$. We assume that the action satisfies the following condition.
\vskip 3pt
\noindent $(*)$ \;  For each arrow $\alpha$ in $Q$, we have $G_{\alpha}=G_{s(\alpha)}\cap G_{t(\alpha)}$. Here, $G_{\alpha}$, $G_{s(\alpha)}$ and $G_{t(\alpha)}$ denote their stabilizers. 

\vskip 3pt

We denote by $(C, D, \Omega)$ the associated Cartan triple. Write $H=H(C, D, \Omega)$ and $\Pi=\Pi(C, D, \Omega)$. Each vertex $\mathbf{i}$ in $Q(C, \Omega)_0=Q_0/G$ gives rise to an idempotent $e_\mathbf{i}$ of $H$, which is also an idempotent of $\Pi$. Following \cite[Subsection~3.2]{GLS17}, we denote by $E_{\mathbf{i}}=H/{H(1-e_\mathbf{i})H}$ the \emph{generalized simple module} over $H$ concentrated in $\mathbf{i}$. We mention that $E_\mathbf{i}$ is naturally isomorphic to $\mathbb{K}[\varepsilon]/{(\varepsilon^{c_{\mathbf{i}}})}$.  We view $E_\mathbf{i}$ as a module over $\Pi$. Recall that $L_\mathbf{i}=\Pi(1-e_\mathbf{i})\Pi$. Then we have an exact sequence  
\begin{align}\label{ses:L}
0\longrightarrow L_\mathbf{i}\longrightarrow \Pi\longrightarrow E_\mathbf{i}\longrightarrow 0
\end{align}
of left $\Pi$-modules. 

The following Morita equivalence is due to \cite{CW24}, which is based on \cite{CW}. 

\begin{prop}\label{prop:CW}
    Keep the assumptions above. Then there is a Morita equivalence 
    $$U\colon \mathbb{K}Q\#G\mbox{-}{\rm Mod}\longrightarrow H(C, D, \Omega)\mbox{-}{\rm Mod}$$
    such that $U((\mathbb{K}Q\#G)\otimes_{\mathbb{K}Q} S_i)\simeq E_{\mathbf{i}}$ for each $\mathbf{i}\in Q_0/G$ and $i\in \mathbf{i}$.
\end{prop}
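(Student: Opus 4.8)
The plan is to take $U$ to be an idempotent truncation and compute it by hand on the induced modules. First I would recall from \cite{CW24} that, after fixing a representative $i_\mathbf{i}\in\mathbf{i}$ for each orbit $\mathbf{i}\in Q_0/G$, the element $e=\sum_{\mathbf{i}\in Q_0/G}e_{i_\mathbf{i}}\#1_G$ is a full idempotent of $\mathbb{K}Q\#G$ with an isomorphism $e(\mathbb{K}Q\#G)e\simeq H=H(C,D,\Omega)$ carrying $e_{i_\mathbf{i}}\#1_G$ to $e_\mathbf{i}$. Here the hypothesis $(*)$, together with ${\rm char}(\mathbb{K})=p$ and $G$ cyclic of $p$-power order, is exactly what makes the diagonal blocks $e_{i_\mathbf{i}}(\mathbb{K}Q\#G)e_{i_\mathbf{i}}\simeq\mathbb{K}G_{i_\mathbf{i}}\simeq\mathbb{K}[\varepsilon]/(\varepsilon^{c_\mathbf{i}})$ (using the orbit--stabilizer identity $c_\mathbf{i}=|G|/|\mathbf{i}|=|G_{i_\mathbf{i}}|$) and the off-diagonal blocks reproduce the arrows and commutativity relations of $H$. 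I would then take $U\simeq e(\mathbb{K}Q\#G)\otimes_{\mathbb{K}Q\#G}-$, i.e. $U(M)=eM$, which is the Morita equivalence $\mathbb{K}Q\#G\mbox{-}{\rm Mod}\to H\mbox{-}{\rm Mod}$.

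Next I would compute $U$ on $M_i:=(\mathbb{K}Q\#G)\otimes_{\mathbb{K}Q}S_i$. Since $\mathbb{K}Q\#G$ is free as a right $\mathbb{K}Q$-module on $\{1\#g\mid g\in G\}$, the space $M_i$ has $\mathbb{K}$-basis $\{(1\#g)\otimes s\mid g\in G\}$, where $s$ spans $S_i$. The identity $(e_{i_\mathbf{j}}\#1_G)\cdot\big((1\#g)\otimes s\big)=(1\#g)\otimes e_{g^{-1}(i_\mathbf{j})}s=\delta_{i_\mathbf{j},\,g(i)}\,(1\#g)\otimes s$ shows that $e_\mathbf{i}=e_{i_\mathbf{i}}\#1_G$ acts as $\delta_{\mathbf{j},\mathbf{i}}$, so $U(M_i)=eM_i$ is concentrated at the vertex $\mathbf{i}$ and is spanned by those $(1\#g)\otimes s$ with $g(i)=i_\mathbf{i}$. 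Choosing $g_0$ with $g_0(i)=i_\mathbf{i}$, this condition is $g\in g_0G_i$, a coset of cardinality $|G_i|=c_\mathbf{i}$; hence $\dim U(M_i)=c_\mathbf{i}=\dim E_\mathbf{i}$.

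Finally I would pin down the $H$-module structure. The loop $\varepsilon_\mathbf{i}$ corresponds to $e_{i_\mathbf{i}}\#\tau$ for a generator $\tau$ of $G_{i_\mathbf{i}}$, and since $G$ is abelian $G_{i_\mathbf{i}}=g_0G_ig_0^{-1}=G_i$. A direct computation gives $(e_{i_\mathbf{i}}\#\tau)\cdot\big((1\#g)\otimes s\big)=(1\#\tau g)\otimes s$ for $g\in g_0G_i$, because $(\tau g)^{-1}(i_\mathbf{i})=g^{-1}(i_\mathbf{i})=i$. Thus $\varepsilon_\mathbf{i}$ acts on $U(M_i)$ by the free left translation $g\mapsto\tau g$ of the coset $g_0G_i$; that is, $U(M_i)$ is the left regular representation of the cyclic $p$-group $G_i$ over $\mathbb{K}$. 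As ${\rm char}(\mathbb{K})=p$ and $G_i$ is cyclic of $p$-power order, this regular representation is uniserial, isomorphic to $\mathbb{K}[\varepsilon]/(\varepsilon^{c_\mathbf{i}})\simeq E_\mathbf{i}$, and being supported at $\mathbf{i}$ the isomorphism is one of $H$-modules. The argument is uniform in $i\in\mathbf{i}$, since a suitable $g_0$ exists for every $i$ in the orbit, which yields the statement for all $i\in\mathbf{i}$.

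The main obstacle is this last step: it is not enough to know that $U(M_i)$ has dimension $c_\mathbf{i}$ and lives at $\mathbf{i}$; one must check that the loop acts as a \emph{single} nilpotent Jordan block, equivalently that $U(M_i)$ is indecomposable. This is precisely where the hypotheses are indispensable, for only when $G_i$ is a cyclic $p$-group and ${\rm char}(\mathbb{K})=p$ is $\mathbb{K}G_i$ local (uniserial), forcing its regular representation to be $\mathbb{K}[\varepsilon]/(\varepsilon^{c_\mathbf{i}})$ rather than a nontrivial direct sum. A secondary point needing care is confirming that the functor of \cite{CW24} may indeed be realized as the truncation $M\mapsto eM$ and that the structural isomorphism $e(\mathbb{K}Q\#G)e\simeq H$ matches $e_{i_\mathbf{i}}\#1_G$ with $e_\mathbf{i}$ and the group-generator block with $\varepsilon_\mathbf{i}$.
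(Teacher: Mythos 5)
Your route differs from the paper's own proof, which is essentially a citation: it invokes \cite[Theorem~7.8]{CW24} for the Morita equivalence and then observes that the folding projection there sends simple roots to simple roots, which identifies $U((\mathbb{K}Q\#G)\otimes_{\mathbb{K}Q}S_i)$ with $E_\mathbf{i}$ root-theoretically. You instead realize the equivalence as the idempotent truncation $M\mapsto eM$ with $e=\sum_{\mathbf{i}}e_{i_\mathbf{i}}\#1_G$ and compute $eM_i$ by hand. This is a legitimate, more self-contained way to obtain the module identification: the coset computation, the dimension count $\dim eM_i=|G_i|=c_\mathbf{i}$, and the reduction to a module concentrated at the vertex $\mathbf{i}$ are all correct, and your worry about whether your truncation agrees with the functor of \cite{CW24} is immaterial, since the proposition only asserts the existence of \emph{some} Morita equivalence with the stated property. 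Like the paper, you still lean on \cite{CW24} for the structural facts that $e$ is full and $e(\mathbb{K}Q\#G)e\simeq H(C,D,\Omega)$; that is an acceptable level of citation here.

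There is, however, one genuine error in the last step: the loop $\varepsilon_\mathbf{i}$ cannot ``correspond to $e_{i_\mathbf{i}}\#\tau$''. The element $e_{i_\mathbf{i}}\#\tau$ is invertible in the corner algebra $e_{i_\mathbf{i}}(\mathbb{K}Q\#G)e_{i_\mathbf{i}}\simeq\mathbb{K}G_{i_\mathbf{i}}$, with inverse $e_{i_\mathbf{i}}\#\tau^{-1}$, and indeed $(e_{i_\mathbf{i}}\#\tau)^{c_\mathbf{i}}=e_{i_\mathbf{i}}\#1_G\neq 0$, whereas $\varepsilon_\mathbf{i}$ satisfies the defining nilpotency relation $\varepsilon_\mathbf{i}^{c_\mathbf{i}}=0$ in $H(C,D,\Omega)$; no algebra isomorphism can match the two. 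This is exactly why your write-up becomes internally inconsistent: you assert both that $\varepsilon_\mathbf{i}$ acts by the free left translation $g\mapsto\tau g$ (an invertible operator) and that it acts as a single nilpotent Jordan block. The correct identification, underlying the isomorphism $\mathbb{K}G_{i_\mathbf{i}}\simeq\mathbb{K}[\varepsilon]/(\varepsilon^{c_\mathbf{i}})$ in characteristic $p$, is $\varepsilon_\mathbf{i}\leftrightarrow e_{i_\mathbf{i}}\#\tau-e_{i_\mathbf{i}}\#1_G$. With that fix your argument closes: your computation shows that $\tau$ acts on $eM_i$ by free translation of the coset $g_0G_i$, so $eM_i$ is the regular $\mathbb{K}G_i$-module; since $\mathbb{K}G_i$ is local uniserial, $\tau-1$ then acts as a single nilpotent Jordan block of size $c_\mathbf{i}$, whence $eM_i\simeq\mathbb{K}[\varepsilon]/(\varepsilon^{c_\mathbf{i}})\simeq E_\mathbf{i}$ as $H$-modules concentrated at $\mathbf{i}$, as required.
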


\begin{proof}
This is due to \cite[Theorem~7.8]{CW24}. We observe that the folding projection $\mathbf{f}$ therein sends simple roots to simple roots. It follows that the Morita equivalence $U$ sends $(\mathbb{K}Q\#G)\otimes_{\mathbb{K}Q} S_i$ to $E_\mathbf{i}$. 
\end{proof}

Recall the bijection $\Theta_Q\colon W(Q)\rightarrow \langle I_i\;|\; i\in Q_0\rangle $ in Section~5 and the bijection $\Theta_C\colon W(C)\rightarrow \langle L_\mathbf{i}\; |\; \mathbf{i}\in Q_0/G\rangle$ in Section~6. The bijection
$\Theta_Q$ restricts to a bijection 
$$\Theta_Q^G\colon W(Q)^G\longrightarrow \langle I_i\;|\; i\in Q_0\rangle^G $$ between the subsets formed by $G$-invariant elements. Recall from Example~\ref{exm:GCM} the isomorphism $\psi\colon W(C)\rightarrow W(Q)^G$, which sends the simple reflections $r_\mathbf{i}$ to $\prod_{j\in \mathbf{i}}s_j$.

The main result of this work is as follows. 

\begin{thm}\label{thm:main}
Assume that ${\rm char}(\mathbb{K})=p>0$ and that $G$ is a cyclic $p$-group which acts on a finite acyclic quiver $Q$ satisfying $(*)$. Keep the notation above. Then there is a Morita equivalence
$$F\colon \Pi(Q)\#G\mbox{-}{\rm Mod}\longrightarrow \Pi(C, D, \Omega)\mbox{-}{\rm Mod}$$
such that the following diagram commutes.
\[\xymatrix{
W(C) \ar[d]_-{\Theta_C}\ar[rr]^-{\psi} && W(Q)^G \ar[d]^-{\Theta_Q^G}\\
\langle L_\mathbf{i}\; |\; \mathbf{i}\in Q_0/G\rangle  && \ar[ll]_-{\Phi_F(-\#G)} \langle I_i\; |\; i\in Q_0 \rangle^G
}\]
\end{thm}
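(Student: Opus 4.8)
The plan is to build $F$ by lifting the Morita equivalence $U$ of Proposition~\ref{prop:CW} to the level of $2$-preprojective algebras, and then to pin down $F$ by computing $\Phi_F$ on the generators $I_\mathbf{i}\#G$.

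First I would construct $F$. Applying Proposition~\ref{prop:Pi} with $n=2$ to the Morita equivalence $U\colon \mathbb{K}Q\#G\mbox{-}{\rm Mod}\to H\mbox{-}{\rm Mod}$ of Proposition~\ref{prop:CW} yields a Morita equivalence $\widetilde{U}\colon \Pi_2(\mathbb{K}Q\#G)\mbox{-}{\rm Mod}\to \Pi_2(H)\mbox{-}{\rm Mod}$. Using the isomorphisms $\Pi(Q)\#G\simeq \Pi_2(\mathbb{K}Q\#G)$ (from Theorem~\ref{thm:pi} and Lemma~\ref{lem:quiver}) and $\Pi\simeq \Pi_2(H)$ (Proposition~\ref{prop:GLS}), I reinterpret $\widetilde{U}$ as the required $F\colon \Pi(Q)\#G\mbox{-}{\rm Mod}\to \Pi\mbox{-}{\rm Mod}$. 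The commuting square of Proposition~\ref{prop:Pi} then records the crucial compatibility $F\circ {\rm infl}_{\mathbb{K}Q\#G}\simeq {\rm infl}_{H}\circ U$, where ${\rm infl}$ denotes the inflation functors appearing as the vertical arrows there (viewing a module as a preprojective module on which the degree-one part acts trivially).

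Next I would reduce the commutativity of the diagram to a statement on generators. By Proposition~\ref{prop:Iyama-FG} (Proposition~A) we already have the monoid isomorphism $\Psi$ with $\Psi(L_\mathbf{i})=I_\mathbf{i}$ and $\Theta_Q^G\circ\psi=\Psi\circ\Theta_C$. Comparing with the diagram to be proved, its commutativity is equivalent to the identity $\Phi_F(-\#G)=\Psi^{-1}$ of monoid maps $\langle I_i\mid i\in Q_0\rangle^G\to\langle L_\mathbf{i}\mid\mathbf{i}\in Q_0/G\rangle$. Since $\langle I_i\mid i\in Q_0\rangle^G$ is generated by the $I_\mathbf{i}$ (as in the proof of Proposition~\ref{prop:Iyama-G}) and both sides are monoid homomorphisms, it suffices to prove $\Phi_F(I_\mathbf{i}\#G)=L_\mathbf{i}$ for each $\mathbf{i}$. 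By Proposition~\ref{prop:compute-Phi} together with the exact sequence (\ref{ses:L}), this amounts to the equality ${\rm add}\,F\big((\Pi(Q)\#G)/(I_\mathbf{i}\#G)\big)={\rm add}\,E_\mathbf{i}$ in $\Pi\mbox{-}{\rm Mod}$.

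Finally I would compute the left-hand side. Since $Q$ is acyclic, the ideals $\{I_i\mid i\in\mathbf{i}\}$ commute pairwise and are pairwise coprime (one checks $I_i+I_j=\Pi(Q)$ by noting $\bar e_i=\bar e_j=\bar 1$ and $e_ie_j=0$ in the quotient), so the Chinese Remainder Theorem gives $\Pi(Q)/I_\mathbf{i}\simeq\bigoplus_{i\in\mathbf{i}}\Pi(Q)/I_i=\bigoplus_{i\in\mathbf{i}}S_i$, and hence $(\Pi(Q)\#G)/(I_\mathbf{i}\#G)\simeq (\Pi(Q)\#G)\otimes_{\Pi(Q)}\big(\bigoplus_{i\in\mathbf{i}}S_i\big)$. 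The key observation is that this induced module lies in the image of ${\rm infl}_{\mathbb{K}Q\#G}$: a degree-one generator $\alpha^*\#g$ sends each $(1\#h)\otimes m$ to an element that, after moving a reverse arrow across the tensor into the $S_i$-factor, vanishes, because $G$ permutes vertices and reverse arrows act as zero on the simples $S_i$. It is therefore the inflation of the $\mathbb{K}Q\#G$-module $\bigoplus_{i\in\mathbf{i}}(\mathbb{K}Q\#G)\otimes_{\mathbb{K}Q}S_i$, so applying the compatibility square of Proposition~\ref{prop:Pi} and then Proposition~\ref{prop:CW} gives $F\big((\Pi(Q)\#G)/(I_\mathbf{i}\#G)\big)\simeq {\rm infl}_{H}\big(\bigoplus_{i\in\mathbf{i}}E_\mathbf{i}\big)\simeq E_\mathbf{i}^{\oplus|\mathbf{i}|}$, whence the desired ${\rm add}$-equality. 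I expect this last step to be the main obstacle: verifying that the induced module is genuinely an inflation (so that the square of Proposition~\ref{prop:Pi} applies) and correctly matching it through $U$ with the generalized simple $E_\mathbf{i}$; the bookkeeping between the preprojective grading on $\Pi(Q)\#G$ and the $G$-action on vertices is where care is required.
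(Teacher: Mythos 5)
Your proposal is correct and follows essentially the same route as the paper's proof: construct $F$ by applying Proposition~\ref{prop:Pi} to the Morita equivalence of Proposition~\ref{prop:CW} (with the identifications from Lemma~\ref{lem:quiver}, Theorem~\ref{thm:pi} and Proposition~\ref{prop:GLS}), reduce the commutativity to $\Phi_F(I_\mathbf{i}\#G)=L_\mathbf{i}$ via Propositions~\ref{prop:Iyama-FG} and \ref{prop:compute-Phi}, and verify this by decomposing $\Pi(Q)/I_\mathbf{i}\simeq\bigoplus_{j\in\mathbf{i}}S_j$ and matching the induced module through the compatibility square with $E_\mathbf{i}$. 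Your only deviations are cosmetic: you obtain the decomposition of $\Pi(Q)/I_\mathbf{i}$ by a Chinese-remainder argument where the paper uses that $I_\mathbf{i}$ is generated by the idempotents $\{e_l\mid l\notin\mathbf{i}\}$, and you check that the reverse arrows annihilate the induced module explicitly where the paper identifies $(\Pi(Q)\#G)\otimes_{\Pi(Q)}S_i$ and $(\mathbb{K}Q\#G)\otimes_{\mathbb{K}Q}S_i$ directly as $\mathbb{K}G\otimes S_i$.
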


For the bottom map $\Phi_F(-\#G)$, we observe that  each element $J$ in $\langle I_i\; |\; i\in Q_0 \rangle^G$ is $G$-invariant, and that $J\#G$ is a two-sided ideal of $\Pi(Q)\#G$. The isomorphism $\Phi_F\colon \mathbf{I}(\Pi(Q)\#G)\rightarrow \mathbf{I}(\Pi(C,D, \Omega))$ is given in Proposition~\ref{prop:MI-iso}. 

\begin{proof}
Write $H=H(C, D, \Omega)$.  We divide the proof into four steps.

\vskip 3pt

    \emph{Step 1.}   By Lemma~\ref{lem:quiver}, we identify $\Pi(Q)$ with the $2$-preprojective algebra $\Pi_2(\mathbb{K}Q)$. Therefore, by Theorem~\ref{thm:pi} the skew group algebra $\Pi(Q)\#G$ is isomorphic to $\Pi_2(\mathbb{K}Q\#G)$. Applying Proposition~\ref{prop:Pi} to the Morita equivalence in Proposition~\ref{prop:CW}, we obtain a Morita equivalence
    $$\widetilde{U}\colon \Pi_2(\mathbb{K}Q\#G)\mbox{-Mod}\longrightarrow \Pi_2(H)\mbox{-Mod}.$$
    By Proposition~\ref{prop:GLS}, we identify $\Pi_2(H)$ with $\Pi(C, D, \Omega)$. Combining these isomorphisms and $\widetilde{U}$, we obtain the required Morita equivalence $F$.

\vskip 3pt

       \emph{Step 2.} We claim that 
       $$F((\Pi(Q)\#G)\otimes_{\Pi(Q)} S_i)\simeq E_\mathbf{i}$$
       for each $\mathbf{i}\in Q_0/G$ and $i\in \mathbf{i}$.  

       For the claim, we recall the isomorphism  
       $$\theta \colon U((\mathbb{K}Q\#G)\otimes_{\mathbb{K}Q} S_i)\rightarrow  E_\mathbf{i}$$
       in $H\mbox{-Mod}$. We view $\mathbb{K}Q\#G$ as a quotient algebra of $\Pi(Q)\#G$, and $H$ as a quotient algebra of $\Pi(C, D, \Omega)$. Then $\theta$ might be viewed as an isomorphism in $\Pi(C,  D, \Omega)\mbox{-Mod}$. The natural map 
       $$(\Pi(Q)\#G)\otimes_{\Pi(Q)} S_i\longrightarrow (\mathbb{K}Q\#G)\otimes_{\mathbb{K}Q} S_i$$
is an isomorphism, since both vector spaces are naturally identified with $\mathbb{K}G\otimes S_i$; compare \cite[(7.1)]{CW24}. Since $\widetilde{U}$ extends $U$, we deduce the claim.
\vskip 3pt

\emph{Step 3.}  For each $\mathbf{i}\in Q_0/G$, we recall that $I_\mathbf{i}=\prod_{j\in \mathbf{i}} I_j$, which is a $G$-invariant ideal of $\Pi(Q)$. We claim that $\Phi_F(I_\mathbf{i}\#G)=L_\mathbf{i}$.

Since the ideal $I_{\mathbf{i}}$ is generated by $\{e_l\;|\; l\notin \mathbf{i}\}$, the quotient algebra $\Pi(Q)/{I_{\mathbf{i}}}$ is isomorphic to a product of $\mathbb{K}$ indexed by $\mathbf{i}$. Consequently, as a left $\Pi(Q)$-module, we have 
\begin{align}\label{iso:Pi-mod}
    \Pi(Q)/{I_{\mathbf{i}}}\simeq \bigoplus_{j\in \mathbf{i}} S_j.
\end{align}
Here, we mention that $\Pi(Q)/{I_{\mathbf{i}}}$ is naturally a $G$-compatible bimodule over $\Pi(Q)$. Therefore, we have the induced bimodule  $(\Pi(Q)/{I_{\mathbf{i}}})\#G$ over $\Pi(Q)\#G$.

By combining Remark~\ref{rem:induced} and (\ref{iso:Pi-mod}), we have the second isomorphism in the following identity consisting of isomorphisms in $\Pi(C, D, \Omega)\mbox{-Mod}$.
\begin{align*}
    F((\Pi(Q)\#G)/{(I_\mathbf{i}\#G)}) &\simeq F((\Pi(Q)/{I_{\mathbf{i}}})\#G)\\
    & \simeq \bigoplus_{j\in \mathbf{i}} \; F((\Pi(Q)\#G)\otimes_{\Pi(Q)} S_j)\\
    & \simeq \bigoplus_{j\in \mathbf{i}} \; E_\mathbf{i} \simeq \bigoplus_{j\in \mathbf{i}} \; \Pi(C, D, \Omega)/{L_\mathbf{i}}
\end{align*}
Here, the third isomorphism uses the claim in Step 2, and the last one follows from (\ref{ses:L}). Apply Proposition~\ref{prop:compute-Phi}, we infer that $\Phi_F(I_\mathbf{i}\#G)=L_\mathbf{i}$, as required. 
\vskip 3pt

\emph{Step 4.} Recall from the proof of Proposition~\ref{prop:Iyama-G} that 
$$\langle I_i\; |\; i\in Q_0\rangle^G=\langle I_\mathbf{i}\; |\; \mathbf{i}\in Q_0/G\rangle.$$ 
Then by Step 3, the following composition of morphisms between monoids is well-defined.
$$\langle I_i\; |\; i\in Q_0\rangle^G  \xrightarrow{-\#G} \langle I_\mathbf{i}\#G\; |\; \mathbf{i}\in Q_0/G\rangle \xrightarrow{\Phi_F} \langle L_\mathbf{i}\; |\; \mathbf{i}\in Q_0/G \rangle$$
By  the isomorphisms in Propositions~\ref{prop:inv-ideal}(3) and ~\ref{prop:MI-iso}, this composite morphism is an isomorphism. Since it sends $I_\mathbf{i}$ to $L_\mathbf{i}$, we infer that it coincides with the inverse of $\Psi$ in Proposition~\ref{prop:Iyama-FG}. Now, the required commutativity follows from Proposition~\ref{prop:Iyama-FG} immediately.
\end{proof}

\begin{rem}
\begin{enumerate}
	\item It might be of interest to compare the two commutative squares in Remark~\ref{rem:monoid} and Theorem~\ref{thm:main}.
	\item When $Q$ is of type $A$ and $G$ is of order $2$, such a  Morita equivalence $F$ is also established in \cite[Lemma 5.4]{KKKMM24} by different method independently.
	\end{enumerate}	
\end{rem}

We illustrate the main result with an explicit example.

\begin{exm}
{\rm 	Let $\mathbb{K}$ be a field of characteristic two, and let $Q$ be the following quiver of type $A_3$.
	\[\xymatrix{ 2 \ar[rr]^-{\alpha}&& 1   && 2' \ar[ll]_-{\alpha'}}\]
 The preprojective algebra $\Pi(Q)$ is given by the following quiver
 	\[\xymatrix{ 2  \ar@/^0.5pc/[rr]^-{\alpha} &&  1 \ar@/^0.5pc/[ll]^-{\alpha^*} \ar@/_0.5pc/[rr]_-{\alpha'^*}   &&  2' \ar@/_0.5pc/[ll]_-{\alpha'}}\]
 subject to the relations $\alpha^*\alpha=0=\alpha'^*\alpha'$ and $ \alpha\alpha^*+\alpha'\alpha'^*=0$. Let $G=\{1_G, \sigma\}$ be a cyclic group of order two, and let $\sigma$ act on $Q$ by interchanging $\alpha$ and $\alpha'$. This action extends a $G$-action on $\Pi(Q)$ by $\sigma(\alpha^*)=\alpha'^*$ and $\sigma(\alpha'^*)=\alpha^*$.

	The associated Cartan triple $(C, D, \Omega)$ is of type $B_2$ and  given as follows:
	$$C=\begin{pmatrix}2 & -1 \\
	-2 & 2\end{pmatrix}, \; D={\rm diag}(2, 1), \mbox{ and } \Omega=\{(\mathbf{1},\mathbf{2})\}.$$
	The generalized preprojective algebra $\Pi(C, D, \Omega)$ is given by the following quiver
	\[
	\xymatrix{
		\boldsymbol{1} \ar@/_0.5pc/[rr]_-{\alpha_{\boldsymbol{21}}}\ar@(ul,dl)[]|{\varepsilon_{\boldsymbol{1}}}  && \ar@/_0.5pc/[ll]_-{\alpha_{\boldsymbol{12}}} \boldsymbol{2}  \ar@(ur,dr)[]|{\varepsilon_{\boldsymbol{2}}}
	}\]
	subject to relations $\varepsilon_{\boldsymbol{1}}^2=0=\varepsilon_{\boldsymbol{2}}$, $\varepsilon_{\boldsymbol{1}}\alpha_{\boldsymbol{12}}\alpha_{\boldsymbol{21}}+\alpha_{\boldsymbol{12}}\alpha_{\boldsymbol{21}}\varepsilon_{\boldsymbol{1}}=0$, and $\alpha_{\boldsymbol{21}}\alpha_{\boldsymbol{12}}=0$. In practice, we omit the loop $\varepsilon_{\mathbf{2}}$. Theorem~\ref{thm:main} yields a Morita equivalence $F$ between $\Pi(Q)\#G$ and $\Pi(C, D, \Omega)$.

 Consider the isomorphism $\psi\colon W(C)\rightarrow W(Q)^G$, which sends $r_\mathbf{1}$ to $s_1$, and $r_\mathbf{2}$ to $s_2s_{2'}=s_{2'}s_2$. We have 
 $$\Phi_F(I_1\#G)=L_\mathbf{1} \mbox{ and } \Phi_F((I_2I_{2'})\#G)= L_\mathbf{2}.$$ The bijection $\Theta_C$ sends the longest element $r_\mathbf{1}r_\mathbf{2}r_\mathbf{1}r_\mathbf{2}$ to the zeo ideal, that is, $$L_\mathbf{1}L_\mathbf{2}L_\mathbf{1}L_\mathbf{2}=0$$
 in $\Pi(C, D, \Omega)$; see \cite[Proposition~4.2]{FG}.  Accordingly, the bijection $\Theta_Q$ sends the longest element $\psi(r_\mathbf{1}r_\mathbf{2}r_\mathbf{1}r_\mathbf{2})=s_1s_2s_{2'}s_1s_2s_{2'}$ to the zero ideal, that is, 
 $$I_1I_2I_{2'}I_1I_2I_{2'}=0$$
 holds in $\Pi(Q)$; compare \cite[Theorem~2.30]{Mizuno}.
}\end{exm}

\vskip 5pt
\noindent {\bf Acknowledgements.}\quad  We thank Professor Ming Fang for helpful suggestions and the reference \cite{Nor}, and Professor Yuya Mizuno for the reference \cite{KKKMM24}. This work is supported by National Natural Science Foundation of China (No.s 12325101, 12131015 and 12161141001).

\bibliography{}

\vskip 10pt

 {\footnotesize \noindent Xiao-Wu Chen\\
 Key Laboratory of Wu Wen-Tsun Mathematics, Chinese Academy of Sciences,\\
 School of Mathematical Sciences, University of Science and Technology of China, Hefei 230026, Anhui, PR China}
 \vskip 5pt

  {\footnotesize \noindent Ren Wang\\
School of Mathematics, Hefei University of Technology, Hefei 230000, Anhui, PR China}

\end{document}